\let\csname equation*\endcsname\relax
\let\csname endequation*\endcsname\relax
\newtheorem{theorem}{Theorem}[section]
\newtheorem{lemma}[theorem]{Lemma}
\newtheorem{corollary}[theorem]{Corollary}
\newtheorem{proposition}[theorem]{Proposition}
\theoremstyle{definition}
\newtheorem{definition}[theorem]{Definition}
\newtheorem{assu}[theorem]{Assumption}
\newtheorem{rem}[theorem]{Remark}
\theoremstyle{remark}
\newenvironment{proofof}[1]{\emph{Proof of #1.}}{\hfill$\square$}
\newcommand{\field}[1]{\mathbb{#1}}
\newcommand{\R}{\field{R}}
\DeclareMathOperator{\dom}{dom}    
\DeclareMathOperator{\dive}{div}   
\DeclareMathOperator{\diag}{diag}   
\newcommand{\embeds}{\hookrightarrow}
\renewcommand{\O}{\Omega}
\renewcommand{\mr}{\mathrm{MR}}
\renewcommand{\S}{\widehat{R}}
\newcommand{\sol}{\mathcal{S}}
\newcommand{\afo}{\kappa} 
\newcommand{\bfo}{\sigma} 
\newcommand{\cfo}{\xi} 
\newcommand{\vecL}{\mathbb{L}}
\newcommand{\domLap}{\dom_{L^p}(\Delta)}
\newcommand{\domVecLap}{\dom_{\vecL^p}(\Delta)}
\newcommand{\iv}[1]{{\mathopen]#1\mathclose[}}
\newcommand{\dd}{\,\mathrm{d}}
\newcommand{\x}{\mathrm{x}}
\newcommand{\sg}[1]{\mathop{\exp(#1)}}
\newcommand{\sgb}[1]{\mathop{\exp\bigl(#1\bigr)}}
\newcommand{\LL}{\mathcal{L}}
\renewcommand{\BB}{\mathcal{B}}
\begin{document}
\title{The full Keller-Segel model is well-posed on nonsmooth domains}
\author{D Horstmann$^1$, H Meinlschmidt$^2$ and J Rehberg$^3$} \address{$^1$ Mathematisches Institut der
  Universit\"at zu K\"oln, Weyertal 86--90, D-50931 K\"oln, Germany} \ead{dhorst@math.uni-koeln.de}
\address{$^2$ Faculty of Mathematics, TU Darmstadt, Dolivostr.~15, D-64293 Darmstadt, Germany}
\ead{meinlschmidt@mathematik.tu-darmstadt.de} \address {$^3$ Weierstrass Institute for Applied Analysis
  and Stochastics, Mohrenstr.~39, D-10117 Berlin, Germany} \ead{rehberg@wias-berlin.de}
\begin{abstract}
  In this paper we prove that the full Keller-Segel system, a quasilinear strongly
  coupled reaction-crossdiffusion system of four parabolic equations, is well-posed in space dimensions $2$ and $3$ in
  the sense that it always admits an unique local-in-time solution in an adequate function space,
  provided that the initial values are suitably regular. The proof is done via an abstract solution
  theorem for nonlocal quasilinear equations by Amann and is carried out for general source terms. It is
  fundamentally based on recent nontrivial elliptic and parabolic regularity results which hold true even
  on rather general nonsmooth spatial domains. This enables us to work in a nonsmooth setting which is
  not available in classical parabolic systems theory. Apparently, there exists no comparable existence
  result for the full Keller-Segel system up to now. Due to the large class of possibly nonsmooth domains
  admitted, we also obtain new results for the ``standard'' Keller-Segel system consisting of only two
  equations as a special case.
\end{abstract}
\ams{35A01,~35K45,~35K57,~35Q92,~92C17} 
\noindent{\it Keywords\/}: Partial differential equations, Keller-Segel system, Chemotaxis,
Reaction-Crossdiffusion System, Nonsmooth domains

\maketitle



\section[Introduction] {Introduction}
%
This paper establishes the local-in-time existence of solutions in a suitable functional-analytic sense
to the so-called original \emph{full Keller-Segel model} which is a coupled system of four nonlinear
parabolic partial differential equations over a finite time horizon $J = \iv{0,T}$ in a bounded domain
$\Omega \subset \R^d$ in space dimensions $d \in \{2,3\}$, and reads as follows:
\begin{align}
  \label{e-u}
  u'- \dive\left( \afo(u,v) \nabla u\right) & = \dive\left( \bfo(u,v) \nabla v\right) & \text{in } J \times \O, \\
  \label{e-v}
  v'- k_v \Delta v & = -r_1vp +r_{-1} w +uf(v) & \text{in } J \times \O,\\
  p'- k_p \Delta p & = - r_1vp +(r_{-1}+r_2)w +u g(v,p) & \text{in } J \times \O,\\
  \label{e-w}
  w'- k_w \Delta w & =  r_1vp -(r_{-1}+r_2)w & \text{in } J \times \O,
\end{align}
combined with homogeneous Neumann conditions
%
\begin{align}
  \label{e-neumann}
  \nu\cdot\kappa(u,v)\nabla u = \nu\cdot k_v\nabla v & =  \nu\cdot k_p\nabla p =
  \nu\cdot k_w\nabla w = 0 & \text{on } J \times \partial\O,
\end{align}
where $\nu$ denotes the outer unit normal to the boundary $\partial\Omega$, and suitable initial values
\begin{align}
  \label{e-IVs}
  \bigl(u(0,\cdot),v(0,\cdot),p(0,\cdot),w(0,\cdot)\bigr) & = (u_0,v_0,p_0,w_0) & \text{in } \O.
\end{align}
Before we elaborate on the origin and biological meaning of this model, let us explain a critical
property of this system of parabolic equations. The coefficient function $\sigma$ in~\eqref{e-u} is
\emph{not} assumed to be definite in sign and generally not restricted in its magnitude. This implies
that the spatial second order system differential operator underlying~\eqref{e-u}--\eqref{e-w} fails to
satisfy the usual strong ellipticity conditions in the form of the Legendre-- or Legendre-Hadamard
conditions; in particular, there is in general no G{\aa}rding inequality available,
cf.~\cite{GiaMarti,Zhang}. The system \emph{is} normally elliptic in the sense of
Amann~\cite{Am90}---also known as Petrowskii parabolic~\cite[Ch.~VII.8]{ladypara}---and as such admits
existence of local-in-time solutions quite immediately under the assumptions there.  These assumptions
however include ``smoothness'', or at least $C^2$-regularity, of the boundary $\partial\Omega$ and it is
not clear how to adopt the theory to less smooth situations. Under such smoothness assumptions, the results
in~\cite{Am90} have been used already to obtain local-in-time existence of a related system,
cf.~\eqref{simplified} below, for instance in~\cite{Cie07,taowinkler,Wrz04}. Let us note that the
authors in~\cite{Gajewski:1998} deal with a related system in a piecewise~$C^2$-setting. 

It is the aim of this work to show the existence of local-in-time solutions of~\eqref{e-u}--\eqref{e-IVs}
in a generally \emph{nonsmooth} setting for
$\Omega$, namely that of a Lipschitz domain. Since we, as explained, cannot use established theory for
parabolic systems, the strategy for our proof is to solve the lower three equations for
$(v,p,w)$ in dependence of the function $u$ and to re-insert this dependence for
$v$ in the first equation. This way, we obtain a \emph{single}, albeit quite involved, parabolic equation
for
$u$ for which we can rely on the full power of recent elliptic~(\cite{auscher,disser,Ziegler}) and
parabolic~(\cite{GKR,HiebRehb}) results, available for very general geometric constellations, in order to
treat it, thereby using a fundamental theorem by Amann~\cite[Thm.~2.1]{AmannDiffEq}. Following this
strategy, we also obtain new results for the related system mentioned above, the classical two-equation
Keller-Segel model of chemotaxis (\eqref{simplified} below), and similar systems in a nonsmooth setting.

The consideration of a nonsmooth boundary for $\Omega$ is not an academic example but motivated by
observations from numerical simulations of both~\eqref{e-u}--\eqref{e-IVs} and simplified models. For
instance, these numerical simulations show a concentration behavior of the solution in the smallest
interior angle of the considered domain. There is also a connection between the geometry of the domain
and the precise critical mass that insures the global-in-time existence of a solution on nonsmooth
domains, given as a multiple of the smallest interior angle of the domain (see for
example~\cite[Thm.~4.3, Rem.~4.5]{Gajewski:1998}). In this sense, it is of interest to establish
(local-in-time) existence results also for a generally nonsmooth boundary of $\Omega$.

\subsection{Biological background}

The above model describes the aggregation phase during the life cycle of cellular slime molds like the
\emph{Dictyostelium discoideum} and was first introduced by Keller and Segel in their 1970ies
paper~\cite{KellerSegel}. We briefly describe the underlying biological processes.  Looking at its life
cycle one observes that a myxamoebae population of the Dictyostelium grows by cell division as long as
there are enough food resources. When these are depleted, the myxamoebae propagate over the entire domain
available to them.  Then, after a while, the phase that is covered by the given model is initiated by one
cell that starts to exude cyclic Adenosine Monophosphate (cAMP) which attracts the other myxamoebae. As a
consequence the other myxamoebae are stimulated to move in direction of the so-called founder cell and
commence to release cAMP. This leads to the aggregation of the myxamoebae that also start to
differentiate within the myxamoebae aggregates resp. within the aggregation centers. The aggregation
phase ends with the formation of a pseudoplasmoid in which every myxamoebae maintains its individual
integrity. However, Keller and Segel did not model the formation of the pseudoplamoid; thus, this phase
of the life cycle of the Dictyostelium is \emph{not} covered in the original equations. This
pseudoplasmoid is attracted by light and, therefore, it moves towards light sources. Finally a fruiting
body is formed and after some time spores are diffused from which the life cycle begins again.  For more
details on the life cycle of the Dictyostelium we refer to~\cite{Bonner:1967}, for example.
%

%
In the given model $u(t,x)$ denotes the myxamoebae density of the cellular slime molds at time $t$ in
point $x$, where $v(t,x)$ describes a chemo-attractant concentration (like cAMP).  The given model for
aggregation of a cellular slime population is based on four basic processes that can be observed during
the aggregation phase:
\begin{enumerate}[a)]
\item The chemo-attractant is produced per amoeba at a positive rate $f(v)$.
\item The chemo-attractant is degraded by an extra-cellular enzyme, where the concentration of the is
  enzyme at time $t$ in point $x$ is denoted by $p(t,x)$.  This enzyme is produced by the myxamoebae at a
  positive rate $g(v,p)$ per amoeba.
\item Following Michaelis-Menten the chemo-attractant and the enzyme react to form a complex ${\cal E}$ of
  concentration $w$ which dissociates into a free enzyme plus the degraded product:
  \begin{equation*}
    v+p\quad
    \genfrac{}{}{0pt}{0}{\genfrac{}{}{0pt}{1}{r_1}{\longrightarrow}}{\genfrac{}{}{0pt}{1}{\longleftarrow}{r_{-1}}}
    \quad\mathcal{E} \quad
    \genfrac{}{}{0pt}{1}{r_2}{\longrightarrow}
    \quad p \,+\text{ degraded product,}
  \end{equation*}
  where $r_{-1},~r_1$ and $r_2$ are positive constants representing the reaction rates.
\item The chemo-attractant, the enzyme and the complex diffuse according to Fick's law.
\end{enumerate}
As a tribute to the experimental setting and the conservation of the myxamoebae density the equations are
equipped with homogeneous Neumann boundary data.
%

Since the influence of chemical substances in the environment on the movement of motile species (in
general called chemotaxis) can lead to strictly oriented or to partially oriented and partially tumbling
movement of the species, the first equation contains both a pure diffusion term
$\dive\left(\afo(u,v) \nabla u\right)$ with $\afo(u,v)\geq 0$ for nonnegative functions $(u,v)$, and a
convection term $\dive\left( \bfo(u,v) \nabla v\right)$ which describes the movement with respect to the
chemical concentration.  For a movement towards a higher concentration of the chemical substance, termed
\emph{positive} chemotaxis, one assumes $\bfo(u,v)<0$ for nonnegative $(u,v)$, while for the movement
towards regions of lower chemical concentration, called \emph{negative} chemotactical movement, the
opposite inequality $\bfo(u,v)>0$ has to hold. For the detailed derivation of the given model we refer
to~\cite{Horstmann1,KellerSegel}.
%

Chemotaxis is known to be an important device for cellular communication. In development or in living
tissues the communication by chemical signals prearranges how cells collocate and organize themselves.
Biologists studying chemotaxis often concentrate their experiments on the movement, the self-organization
and pattern formations of the cellular slime mold Dictyostelium discoideum.  One reason for the great
interest in this cellular slime mold is caused by the fact that ``\emph{development in Dictyostelium
  discoideum results only in two terminal cell types, but processes of morphogenesis and pattern
  formation occur as in many higher organisms}'' (see~\cite[p.~354]{Nanjundiah:1992}).  Thus biologists
hope that studying this cellular slime mold gives more insights in understanding cell differentiation.
%
\subsection{Context and related work}
By to a simplification done by Keller and Segel themselves in~\cite{KellerSegel}, the original model of
four strongly coupled parabolic equations~\eqref{e-u}--\eqref{e-w} was reduced to a model which is given
by a system of only \emph{two} strongly coupled parabolic equations. This was done by assuming that the
complex is in a steady state with regard to the chemical reaction and that the total concentration of the
free and the bounded enzyme is a constant; assumptions that are well-known for the Michaelis-Menten
equations in enzyme kinetics. The reduction was justified by the paradigm that ``\emph{it is useful for
  the sake of clarity to employ the simplest reasonable model}'' (see~\cite[p.~403]{KellerSegel}). The
corresponding model is then given by the following parabolic equations:
\begin{equation}
  \left.\begin{aligned}
      u' - \dive\left(\afo(u,v)\nabla u \right)& = \dive\left( \bfo(u,v)\nabla v\right)  & \text{in $J \times \Omega$}, \\
      v_t - k_c \Delta v & = -k(v)v + uf(v)\qquad  & \text{in $J \times \Omega$}, \\ \nu\cdot\afo(u,v)\nabla u & = \nu\cdot k_c\nabla v = 0 & \text{on $J \times \partial\Omega$}, \\
      \bigl(u(0,\cdot),v(0,\cdot)\bigr) & = (u_0,v_0) & \text{in $\Omega$}.
    \end{aligned}
    \qquad \right\}\label{simplified}
\end{equation}
This model is nowadays often referred to as the classical chemotaxis model or as the Keller-Segel model
in chemotaxis. As in the full model, $\kappa(u,v)$ denotes the density-dependent diffusion coefficient
and $\sigma(u,v)$ is the chemotactic sensitivity, where now $k(v)v$ and $uf(v)$ describe degradation and
production of the chemical signal.  For $\kappa(u,v)=1$, $\sigma(u,v)= - \chi\cdot u$ or
$ - \chi\frac{u}{v}$ with a constant $\chi>0$ and $k(\cdot)$ and $f(\cdot)$ positive
constants, 
this two-equation model has been extensively studied during the last twenty years, see for
instance~\cite{Hillen:Painter:2002,Hillen:Painter,Horstmann1,Horstmann2,Horstmann3} and the references
therein. In particular the so-called Childress-Percus conjecture~\cite{Childress:1981}
for~\eqref{simplified} concerning $L^\infty$ blow-up behavior has attracted many scientists. Subdividing
via space dimension we mention~\cite{Osaki:2001} for $d=1$ and, among
others,~\cite{Biler:1998,Gajewski:1998,Herrero1,Horstmann4,Horstmann5,Horstmann6,Nagai1} for $d = 2$, as
well as~\cite{Boy:1996,Perthame1,Horstmann7,Horstmann8,Winkler1} for $d = 3$.
%

%

From the biological point of view, the blow-up behavior of the solution can be interpreted as the
starting point of cell differentiation and therefore the blow-up time $T_{\max}<\infty$ would correspond
to the stopping time where the aggregation phase in the life cycle of the Dictyostelium ends and the cell
differentiation and formation of the pseudoplasmoid starts.
%
%
%

Besides the mathematical interesting question whether the solution can blow up in finite or in infinite
time one can also observe interesting pattern formations during the aggregation phase and development of
the Dictyostelium such as traveling waves like motion and spiral waves for the chemo-attractant. Although
there have been some attempts to prove the existence of traveling wave solutions and to simulate
sunflower spirals for the simplified model~\eqref{simplified}---see for
instance~\cite{Aotani,Horstmann2,Horstmann9,Wang} and the references therein---, one seems to need more
complicated chemotaxis systems consisting of more than only two equations to describe such kind of
pattern formation.  However, these more complicated systems still fit in the general setting of the full
Keller-Segel model as considered in the present paper (cf.~\eqref{e-preu}--\eqref{e-anfang} on
page~\pageref{e-preu} below). Hence, it might be worthwhile to work on the original four-equation-system
instead if one tries to describe these pattern formations during the aggregation of some particular
species. Possibly, the reduction to two equations that was done in~\cite{KellerSegel} was too restrictive
to cover all observable patterns and phenomena during the aggregation of mobile species like the
Dictyostelium discoideum. As another example, one can find an attempt to describe the aggregation of the
Dictyostelium discoideum along the experimentally observable cAMP spiral waves in~\cite{Vasiev-Panfilov}
where the authors consider a coupled three-equations model that contains a version of the simplified
Keller-Segel model complemented with an ODE that covers the recovery process of the myxamoebae after
binding the extra-cellular cAMP. As above, it seems worthwhile to investigate the original full model to
see whether it can also generate these complex pattern formations.
%

As far as we know there are no results available for the full four-equation model on nonsmooth
domains. In particular, the question of blow-up has, as far as we know, not been studied for the full
four equations model up to now. Of course, there are several local-existence results
known for parabolic-parabolic and parabolic-elliptic versions of the simplified two equation
model~\eqref{simplified} as for instance the results
in~\cite{Alt:Habil,Biler:1998,BilerZienkiewicz,Hillen:Painter:2002,Nagai:1997,
  Senba:Suzuki:Applied-Analysis2004,Stinner1,Yagi}. Furthermore, existence results for solutions for the
simplified two-equation model with additional population growth are also known,
cf.~\cite{Kang1,Osaki:2002,Tello1,Winkler2,Xiang1}. Some of these results may be extended to the full
model~\eqref{e-u}--\eqref{e-IVs}; however, all of them consider the equation either on a smooth domain
with boundary of class $C^2$, on convex domains with smooth boundaries, or on the whole space
$\R^d$. Furthermore, the initial data has to satisfy certain comparability conditions in some cases.  The
only result which we are aware of concerning nonsmooth objects is the local existence result
in~\cite{Gajewski:1998} where the authors allow a domain $\Omega\subset\R^2$ with boundary
$\partial\Omega$ that is piecewise of class $C^2$.  It will moreover turn out that the analysis presented
below for the full model~\eqref{e-u}--\eqref{e-IVs} immediately transfers to the more simple
model~\eqref{simplified}. Therefore, the results stated in the present paper are completely new and much
more general than those known so far.
%
\subsection{Outline and strategy}
Our analysis of the system~\eqref{e-u}--\eqref{e-w} fundamentally bases on the fact that it is only
\emph{one} equation,~\eqref{e-u}, where the second derivative of another quantity appears. So we solve
the equations~\eqref{e-v}--\eqref{e-w} for $(v,p,w)$, where $u$ enters parametrically as a given
function. It turns out that the dependence of $(v,p,w)$ on $u$ in this spirit is well-behaved in a
suitable sense. This allows to insert $(v,p,w)$ in their dependence of $u$ into~\eqref{e-u}. Thus, one
ends up with \emph{one} ``scalar'' quasilinear parabolic equation whose dependence on $u$ is
\emph{nonlocal in time}, since the functions $v,p,w$, as solutions to evolution equations themselves,
depend on the \emph{whole} function $u$ on $[0,t]$ instead of just the value $u(t)$. Such an equation,
however, can be solved by a pioneering theorem of Amann which covers such general settings,
cf.~\cite[Thm.~2.1]{AmannDiffEq} or Theorem~\ref{t-Amann} below.  Still, it is a formidable task to
verify the assumptions of the theorem, since the equation under consideration is still quasilinear and
nonlocal in nature.

Thereby it is not obvious a priori in which function spaces the problem should be considered, but since
\emph{homogeneous} Neumann conditions are prescribed, cf.~\eqref{e-neumann}, Lebesgue spaces
$L^p(\Omega)$ are a favorable choice since the boundary conditions are reflected in a strong sense by the
differential operators there, see Remark~\ref{rem:A2} below. Fortunately, there are various recent
elliptic~(\cite{auscher,disser,Ziegler}) and parabolic~(\cite{GKR,HiebRehb}) regularity results available
which are even valid in the case of non-smooth domains and which allow for a treatment
of~\eqref{e-u}--\eqref{e-w} in this setting. The indeed crucial problem is the adequate choice of the
integrability order $p$. However, there is fairly general class of domains $\Omega$ for which the
divergence-gradient operator $-\nabla\cdot\mu\nabla$ admits maximal Sobolev regularity on $W^{1,q}$ for
some $q >d$, that is,
\begin{equation}
  -\nabla \cdot \mu \nabla +1 \colon \quad W^{1,q}(\O) \to
  \bigl(W^{1,q'}(\O)\bigr)'=:W^{-1,q}_\bullet(\Omega)\label{eq:iso_assu} 
\end{equation}
is a topological isomorphism, where $\mu$ is a bounded, measurable and strictly positive function on
$\Omega$, cf.~\cite{disser,Ziegler} (see Chapter~\ref{s-pre} for precise definitions). Combining this
isomorphism property with recent and powerful results on the square root of elliptic operators as
in~\cite[Thm.~5.1]{auscher} (see also Proposition~\ref{p-wurz} below) provides very precise embedding
results for the domains of fractional powers of the elliptic operators on Lebesgue spaces
$L^p(\Omega)$. On the other hand, one can show that the domains of the operators
$-\nabla\cdot \phi \mu \nabla $, when considered on $L^{\frac{q}2}(\O)$, are independent of $\phi$, whenever
$\phi$ is a strictly positive function from $W^{1,q}(\O)$, cf.\ e.g.~\cite{archive} (see also
Lemma~\ref{l-domAIN} below). This is a crucial property in the task of establishing \emph{constant}
domains for the operators entering in the quasilinear equation~\eqref{e-u}, the latter being a central
point in the theorem of Amann mentioned above, for which we then indeed choose a Lebesgue space $L^p(\O)$
with $p = \frac{q}{2}$ for $q>d$ satisfying~\eqref{eq:iso_assu}.

Note that for the Keller-Segel model~\eqref{e-u}--\eqref{e-IVs} one in fact only needs to consider
$\mu \equiv 1$, but our technique is not necessarily restricted to the Laplacian or even only scalar
multipliers within the divergence-gradient operator, cf.\ our comments in Chapter~\ref{s-Concluding} at
the end of the paper.

Let us emphasize that this strategy for the analysis of the system~\eqref{e-u}--\eqref{e-IVs} may be
adopted to both the simplified model~\eqref{simplified} and the situation where the
equations~\eqref{e-v}--\eqref{e-w} for $v,p,$ and $w$ are \emph{elliptic} only, with virtually no
changes. For the latter case, one would even have an immediate relation between $(v(t),p(t),w(t))$ and
$u(t)$ for each $t \in J$, i.e., a local dependency of $(v,t,p)$ on $u$, for which the resulting reduced
equation for $u$ is then tractable using the slightly less restrictive theorem of Pr\"uss~\cite{pruess}
instead of the result of Amann suitable for nonlocal
dependencies. 
See~\cite{ThermistorPre1} for a display of this technique where the (single) elliptic equation is even
also quasilinear.

The outline of the paper is as follows: in the next chapter we will establish notations, general
assumptions and definitions. In Chapter~\ref{s-pre}, we collect preliminary results, partly already
established in other papers. In particular, the concept of maximal parabolic regularity is introduced --
being fundamental for all what follows. The investigation of the model is carried out in
Chapter~\ref{s-inv}, beginning with a precise formulation in Chapter~\ref{s-formu}. The main result,
local-in-time existence and uniqueness for the Keller-Segel system, is formulated in
Theorem~\ref{t-mainres}. It follows the proof of this in Chapter~\ref{s-proof}.  The paper finishes with
concluding comments and remarks in Chapter~\ref{s-Concluding}.

\section[Notations, general assumptions]{Notations, general assumptions and definitions}
The underlying spatial set $\Omega$ is always supposed to be a bounded Lipschitz domain in $\R^d$ for
$d= 2$ or $d=3$ in the sense of~\cite[Def.~1.2.1.2]{grisvard85} or~\cite[Ch.~1.1.9]{mazyasob}. The reader
should carefully notice that this is different from a \emph{strong Lipschitz domain}, which is more
restrictive and in fact identical with a \emph{uniform cone domain}, see
again~\cite[Def.~1.2.1.1]{grisvard85} or~\cite[Ch.~1.1.9]{mazyasob}. We note that a Lipschitz domain has
the extension property, see e.g.~\cite[Thm.~7.25]{gil}, such that the usual function space embeddings are
available.

Concerning function space terminology, $W^{1,q}(\Omega)$ for $q \in \iv{1,\infty}$ stands for the usual
Sobolev space on $\Omega$ as a complex vector space (we will switch to real ones later). Accordingly,
$W_\bullet^{-1,q}(\Omega)$ denotes the \emph{anti-}dual of $W^{1,q'}(\Omega)$.  Moreover, for
$\theta \in \iv{0,1}$ and $q \in \iv{1,\infty}$, $H^{\theta,q}(\O)$ is the symbol for the space of Bessel
potentials on $\O$, cf.~\cite[Ch.~4.2.1]{triebel}. The space of uniformly continuous functions on $\O$ is
denoted by $C(\overline\Omega)$. For an open set $\Lambda \subset \R^N$, where $N \in \{1,2,3\}$, and a
Banach space $X$, we write $C^\alpha(\Lambda;X)$ for the usual $X$-valued H\"older spaces of order
$\alpha\in \iv{0,1}$, cf.~\cite[Ch.~II.1.1.]{A95}. We will mostly encounter these in the incarnations
$\Lambda = \Omega$ and $X = \R$ or $\Lambda$ an interval in $\R$ and $X$ a function space.  Since we
frequently work with triplets of functions, let $\vecL^p(\O)$ and $\mathbb W^{1,q}(\O)$ denote the spaces
$(L^p(\O))^3$ and $(W^{1,q}(\O))^3$, respectively. The domain $\Omega$ under consideration will not
change throughout this work, hence we usually omit the reference to $\O$ when working with the function
spaces.

For two Banach spaces $X$ and $Y$ we denote the space of linear, bounded operators from $X$ into $Y$ by
$\LL(X;Y)$ with $\LL(X) := \LL(X;X)$. The norm in a Banach space $X$ will be always indicated by
$\|\cdot \|_X$. If a Banach space $Y$ is contained in another Banach space $X$ and the canonical injection
of $Y$ into $X$ is continuous, then we say that $Y$ is \emph{embedded} into $X$ and write $Y \embeds
X$. Let $Y$ embed into $X$. Then $\mathcal E(Y;X)$ denotes the \emph{embedding constant}, i.e., the norm
of the embedding map. Moreover, in the same situation, if $B$ is the restriction of an operator
$A \colon X \supseteq \dom(A) \to X$ to the space $Y$, then $\dom_Y(B)$ indicates the domain of this
operator $B$ in $Y$.

Finally, we use $J = \iv{0,T}$ for $0 < T < \infty$, and the letter $c$ denotes a generic constant, not
always of the same value.

\subsection{Assumptions}

In order to allow for concise notation in the later stages of this work, we generalize the nonlinear
growth, production and degradation terms on the right hand sides of~\eqref{e-v}--\eqref{e-w} to general
functions $R_2, R_3, R_4$, including a function $R_1$ for~\eqref{e-u} which is not present in the above
model but poses no problem to include analytically. Note that the differential operator for $v$
in~\eqref{e-u} will be treated specially. For the $R_i$ and for the coefficient functions $\afo$ and
$\bfo$, we make the following assumptions.
\begin{assu} \label{a-Koeff}
  \begin{enumerate}[i)]
  \item The functions $\afo, \bfo\colon\R^2 \to \R$ are supposed to be twice continuously differentiable.
    Moreover, $\afo$ takes only positive values.
  \item For $i = 1,\ldots,4$, each function $R_i$ is defined on $\R^4$ and maps into $\R$, and is also
    assumed to be twice continuously differentiable.
  \end{enumerate}
\end{assu}

We point out that we have to pose another assumption of completely different nature than the above ones
concerning the regularity of the domain $\O$, cf.~Assumption~\ref{a-reg} below. This assumption is only
posed below to put it in the appropriate context.

\begin{rem}
  In the sequel, the functions $\afo, \bfo$ are always readily identified with the induced superposition
  operators, acting from $C(\overline \Omega) \times C(\overline \Omega)$ into $C(\overline \Omega)$. The
  same is, \emph{mutatis mutandis}, done for the functions $R_1, R_2, R_3, R_4$.
\end{rem}
\section[Preliminaries]{Preliminaries: Some operator theoretic results}
\label{s-pre}

In this chapter we declare suitable Banach spaces on which the Keller-Segel system will be considered and
in which the analysis is carried out, and the corresponding differential operators. As already explained
in the introduction, we plan to treat the system in the $L^p$ scale.
Unfortunately, in view of the nonlinearities in the system, the Hilbert space $L^2$ is \emph{not}
appropriate in general, cf.\ also our comments in Chapter~\ref{s-Concluding} below. It will become clear
that $L^p$-spaces with suitably chosen $p$, possibly smaller than $2$, allow for a suitable treatment of
the Keller-Segel system. Thus, it is the aim of the following considerations to provide a consistent
definition of the second order divergence operators on such $L^p$ spaces and to show that these operators
indeed possess suitable functional analytic properties, in particular, maximal parabolic regularity.

\begin{definition} \label{d-coeff} Assume that $\mu$ is a real-valued, measurable, bounded function on
  $\Omega$.  We define
  the continuous linear operator
  \begin{equation*}
    -\nabla \cdot \mu
    \nabla \colon W^{1,2} \to W_\bullet^{-1,2}
  \end{equation*}
  by
  \begin{equation} \label{e-0815} \bigl\langle -\nabla \cdot \mu \nabla v,w\bigr\rangle :=\int_\Omega \mu \nabla
    v\cdot \nabla \overline w \dd \x \quad \text{for } \quad v,w \in W^{1,2}.
  \end{equation}
  It is convenient to view this operator equivalently as a closed one on $W^{-1,2}_\bullet$ with domain
  $W^{1,2 }$. For $q > 2$, we define the operator in $W^{-1,q}_\bullet$ by taking the maximal
  corestriction to that space, thus obtaining again a closed operator, denoted by the same symbols, with
  a generally unknown domain of definition $\dom_{W^{-1,q}_\bullet}(-\nabla\cdot\mu\nabla)$.
\end{definition}

Taking $\mu \equiv 1$ in Definition~\ref{d-coeff}, one, of course, recovers the (negative) weak
Laplacian.

\begin{rem} \label{r-rechtfertig} In this context, it is not quite common to admit functions $\mu$ which
  take positive \emph{and} negative values. Nevertheless, this is unavoidable by the properties of the
  function $\sigma$ originating from the model, cf.~the introduction, see also~\cite{Gajewski:1998}.
\end{rem}

\subsection{The restriction of $-\nabla\cdot\mu\nabla$ to $L^p$ spaces}
\label{sec:restriction--nabla}

Let us in this section consider $-\nabla\cdot\mu\nabla$ as in Definition~\ref{d-coeff} as an operator mapping $W^{1,2}$ to
$W^{-1,2}_{\bullet}$. 
For $p \in [1,\infty[$, we define the \emph{restriction} $A_p(\mu)$ of $-\nabla \cdot \mu \nabla$ to
the space $L^p$ as follows: $\psi \in W^{1,2} \cap L^p$ belongs to $\dom_{L^{p}}(A_p(\mu))$ iff the (anti-) linear
form
\begin{equation} \label{e-antilii} \bigl(W^{1,2} \cap L^{p'}\bigr) \ni \varphi \mapsto \int_\Omega \mu \nabla \psi \cdot \nabla
  \overline \varphi \dd\x = \bigl\langle -\nabla \cdot \mu \nabla \psi ,\varphi \bigr\rangle
\end{equation}
is continuous if $W^{1,2} \cap L^{p'}$ is only equipped with the weaker $L^{p'}$ topology, i.e., if there exists a
constant $c = c(\psi)$ such that
\begin{equation*}
  \bigl|\langle -\nabla\cdot\mu\nabla\psi,\varphi\rangle\bigr| \leq c(\psi) \|\varphi\|_{L^{p'}}
  \quad \text{for all } \varphi \in W^{1,2} \cap L^{p'}.
\end{equation*}
In this case, the functional~\eqref{e-antilii} may be extended by continuity from the dense subspace
$W^{1,2} \cap L^{p'}$ to whole $L^{p'}$ under preservation of its norm. We denote the representative of this
functional on $L^{p'}$ by $\Psi \in L^p$ and define $A_p(\mu) \psi := \Psi$.  Then $A_p(\mu)\psi$
satisfies
\begin{equation} \label{e-constitut} \int_\O \bigl (A_p(\mu)\psi\bigr ) \, \overline \varphi \dd\x
  =\int_\Omega \mu \nabla \psi \cdot \nabla \overline \varphi \dd\x = \bigl\langle -\nabla \cdot \mu \nabla
  \psi ,\varphi \bigr\rangle \quad \text{for all }\varphi \in W^{1,2} \cap L^{p'},
\end{equation}
which is considered as the \emph{constitutive relation} between $-\nabla \cdot \mu \nabla \psi$ and
$A_p(\mu)\psi$. In fact,~\eqref{e-constitut} precisely means that
$-\nabla \cdot \mu \nabla \psi \in W^{-1,2}_\bullet$ is the image of $A_p(\mu)\psi \in L^p$ under the
embedding $L^p \embeds W^{-1,2}_\bullet$.  Moreover, it is clear that the $L^p$-norm of $A_p(\mu)\psi$ is
nothing else but the norm of the antilinear form~\eqref{e-antilii} where $W^{1,2} \cap L^{p'}$ is equipped with the
$L^{p'}$-norm.

Since the notation $A_p(\mu)$ already indicates the space on which the operator is assumed to act, we
write $\dom(A_p(\mu))$ instead of $\dom_{L^p}(A_p(\mu))$ if there is no need for greater care. Note that
the often used technique to construct the ``strong'' differential operators on the $L^p$ scale by
restricting $A_2(\mu)$ to $L^p$ for $p > 2$ and taking adjoints of these resulting operators to define
the corresponding operator in $L^p$ for $p < 2$ (or forming the closure of $A_2(\mu)$ there) gives the
same operators as the procedure above.

We will mostly consider the case of strictly positive $\mu$; only in Lemma~\ref{l-domAIN} properties of
the operators $A_p(\mu)$ with possibly nonpositive values for $\mu$ are pointed out which are
fundamental for the treatment of the divergence operator in the right hand side of~\eqref{e-u}. Hence,
let us now assume for the rest of this subchapter that $\mu$ is bounded from below by a positive
constant.

\begin{rem}\label{rem:A2}
  It is well-known that the property $\psi \in \dom(A_2(\mu))$ implies a (generalized) homogeneous
  Neumann condition $\nu \cdot \mu \nabla \psi =\nu \cdot \nabla \psi =0$ on $\partial \Omega $,
  cf.~\cite[Ch.~1.2]{cia} or~\cite[Ch.~II.2]{ggz}, $\nu$ being the outer normal at the boundary.  This
  fact reflects the homogeneous Neumann boundary conditions~\eqref{e-neumann} on the functional analytic
  level.
\end{rem}

We collect some properties of the operators $A_p(\mu)$ and its relation with $-\nabla\cdot\mu\nabla$.

\begin{proposition} Let $\mu \in L^\infty(\O)$ be a real function with a strictly positive lower
  bound. Then the Lipschitz property of $\Omega$ implies the following assertions:
  \label{prop:collection-Ap}
  \begin{enumerate}[i)]
  \item The operator $A_2(\mu)$ is a non-negative, selfadjoint operator on $L^2$, classically considered
    as the operator induced by the form~\eqref{e-0815} on $W^{1,2}$.
  \item Under the Lipschitz assumption on $\Omega$, the operators $\nabla\cdot\mu\nabla$ generate
    analytic semigroups on $W^{-1,q}_\bullet$ for all $q \in [2,\infty[$.
  \item $-A_2(\mu)$ generates a contractive semigroup $\{\sg{-tA_2(\mu)}\}_{t \ge 0}$ on $L^2$ which
    extends consistently to all $L^p$ spaces for $p \in [1,\infty]$ and is moreover analytic if
    $p < \infty$. These semigroups are also consistent with the ones generated by $\nabla\cdot\mu\nabla$ on
    $W^{-1,q}_\bullet$ and their generators are exactly the operators $-A_p(\mu)$. The semigroups $\sg{-t(A_p(\mu)+1)}_{t\geq0}$ transform \emph{real} functions into \emph{real}
    ones and \emph{positive} ones into \emph{positive} ones.
  \item Both $-\nabla\cdot\mu\nabla+1$ on $W^{-1,q}_\bullet$ for $q \in [2,\infty[$ and $A_p(\mu)+1$ on
    $L^p$ for $p \in \iv{1,\infty}$ are positive operators; in particular, their fractional powers are
    well-defined. The operators $-A_p(\mu) + 1$ even admit bounded imaginary powers: the
    set of operators
    $\bigl\{(A_p(\mu)+1)^{\mathrm{i}s} \colon s \in \iv{-\varepsilon, \varepsilon}\bigr\}$ is bounded in
    $\LL(L^p)$ for every $p \in \iv{1,\infty}$ and every $\varepsilon >0$.
  \item The operator $A_2(\mu)+1$ satisfies the Kato square root property, that is, we have
    $\dom\bigl((A_2(\mu)+1)^{\frac12}\bigr) = W^{1,2}$, or equivalently, $(A_2(\mu)+1)^{\frac12}$ is a
    topological isomorphism between $W^{1,2}$ and $L^2$.
  \end{enumerate}
\end{proposition}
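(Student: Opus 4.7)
The plan is to assemble the five assertions from classical form theory together with the regularity machinery on non-smooth domains cited in the introduction; essentially everything is known in the Lipschitz setting, and the work lies in bookkeeping the extensions between the $L^2$-, $L^p$- and $W^{-1,q}_\bullet$-scales. My structure will be: (i) and (v) via Kato's representation theorem, then (ii) and (iii) by extending/restricting the $L^2$-form, and finally (iv) by transferring from $L^2$ to $L^p$.

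For (i), consider the sesquilinear form $\mathfrak a(\psi,\varphi):=\int_\O \mu\,\nabla\psi\cdot\nabla\overline\varphi\dd\x$ on $W^{1,2}$. Since $\mu$ is real, bounded and has a strictly positive lower bound, $\mathfrak a$ is symmetric, continuous and non-negative, and $\mathfrak a(\psi,\psi)+\|\psi\|_{L^2}^2$ is equivalent to the squared $W^{1,2}$-norm; in particular $\mathfrak a$ is closed on $W^{1,2}$. Kato's first representation theorem then produces a unique non-negative selfadjoint operator whose defining identity coincides with~\eqref{e-constitut} for $p=2$, so it equals $A_2(\mu)$. Assertion (v) is, in the selfadjoint setting, a byproduct of the spectral calculus: for a non-negative selfadjoint operator coming from a closed symmetric form, $\dom((A_2(\mu)+1)^{1/2})$ is exactly the form domain $W^{1,2}$, and the topological isomorphism statement follows from the closed graph theorem.

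For (ii) and (iii), I would argue by consistency. Coercivity of $\mathfrak a+1$ on $W^{1,2}$ together with Lax--Milgram shows that $-\nabla\cdot\mu\nabla+1$ is a topological isomorphism $W^{1,2}\to W^{-1,2}_\bullet$, which together with standard sectorial resolvent estimates yields the analytic semigroup on $W^{-1,2}_\bullet$; the case $q\in\iv{2,\infty}$ follows by the restriction/duality results on Lipschitz domains that are already cited in the paper and by interpolation. Turning to the $L^p$-scale, the Beurling--Deny criteria, applied to the symmetric positive form on $L^2$, imply that $\sg{-tA_2(\mu)}$ is submarkovian, so it extends consistently to contractive semigroups on every $L^p$ with $p\in\iv{1,\infty}$; Stein interpolation then delivers analyticity on $L^p$ for $p<\infty$, and preservation of real- and of positive functions for the shifted semigroup is again a direct Beurling--Deny consequence. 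The consistency between the $L^p$- and the $W^{-1,q}_\bullet$-semigroups, and the identification of the respective generators with $-A_p(\mu)$ and $\nabla\cdot\mu\nabla$, follows from the fact that all these families arise by restriction/extension of the \emph{same} form and agree on a dense intersection, forcing equality of the generators after the defining procedure of Subsection~\ref{sec:restriction--nabla}.

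For (iv), positivity of $-\nabla\cdot\mu\nabla+1$ and of $A_p(\mu)+1$ is an immediate consequence of (ii)/(iii) together with the shift by $1$, which gives sectoriality with spectrum bounded away from $0$ and hence well-defined fractional and complex powers. Bounded imaginary powers on $L^2$ are free: selfadjointness of $A_2(\mu)+1$ yields $\|(A_2(\mu)+1)^{\mathrm{i}s}\|_{\LL(L^2)}=1$ for all real $s$ via the spectral theorem. To pass to $L^p$, my plan is to invoke that on a Lipschitz domain the semigroup $\sg{-tA_2(\mu)}$ satisfies Gaussian upper bounds (a fact underlying the Auscher and Hieber--Rehberg results cited in the introduction), and then apply a transference argument in the spirit of Duong--Robinson / Hieber--Pr\"uss to transfer the $L^2$-bound on imaginary powers to a uniform bound in $\LL(L^p)$ for $s$ in any compact interval. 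The main obstacle is precisely this last point: bounded imaginary powers do not transfer automatically along semigroup consistency, so establishing them on the whole $L^p$-scale requires strong enough heat-kernel / off-diagonal bounds on the divergence form operator over the non-smooth domain $\O$, and this is the one place where we genuinely lean on the recent regularity results rather than textbook form theory. All other assertions are then routine.
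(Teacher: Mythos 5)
Your overall architecture---Kato's representation theorem for i) and v), Beurling--Deny and consistency of the form-induced semigroups for ii) and iii), and a transfer argument for the imaginary powers in iv)---matches the paper's proof, which is itself essentially a collection of citations to exactly this circle of results (Ouhabaz's monograph for the form theory, the sub-Markovian extension and the real/positivity preservation; Disser--ter~Elst--Rehberg for the generation on $W^{-1,q}_\bullet$ and the consistency across scales; Kato's classical square root theorem for selfadjoint operators for v)). The one place where you take a genuinely different route is the bounded imaginary powers in iv): the paper invokes Cowling's theorem (or \cite[Cor.~7.24]{Ouh05}), which yields bounded imaginary powers on every $L^p$, $p \in \iv{1,\infty}$, for the generator of \emph{any} sub-Markovian symmetric semigroup---precisely the property you have already secured in iii) via Beurling--Deny. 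So the ``main obstacle'' you identify dissolves: no Gaussian or off-diagonal bounds on the nonsmooth domain are needed for this step. Your alternative via Gaussian upper bounds and Duong--Robinson/Hieber--Pr\"uss transference is also viable on a Lipschitz domain, but it imports strictly more than is required and makes the $L^p$-transfer look harder than it is.

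There are two smaller points where your argument, as written, does not close. First, in iii), Stein interpolation between the contractive $L^1$-semigroup and the analytic $L^2$-semigroup gives analyticity on $L^p$ only for $1<p<\infty$, with the angle degenerating as $p\to 1$; it does not give analyticity at the endpoint $p=1$, which the assertion claims. The $L^1$ case needs an extra input, e.g.\ \cite[Cor.~4.10]{Ouh05} or the Gaussian bounds you mention elsewhere. Second, in ii) the phrase ``by interpolation'' cannot produce the range $q>2$ from the single endpoint $q=2$; the generation of analytic semigroups on $W^{-1,q}_\bullet$ for $q>2$ genuinely rests on the cited nonsmooth-domain result \cite[Lem.~6.9(c)]{TomKaroJoDiffEq} (for instance via conjugation with the square root isomorphism of Proposition~\ref{p-wurz}~ii)). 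Both gaps are repairable by citation, but they are not covered by the mechanisms you name.
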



\begin{proof}
  \begin{enumerate}[i)]
  \item See~\cite[Ch.~1.2.3]{Ouh05} or the classical text~\cite[Ch.~VI.2]{kato}.
  \item See~\cite[Lem.~6.9(c)]{TomKaroJoDiffEq}.
  \item The extension of $\{\sg{-tA_2(\mu)}\}_{t \ge 0}$ to $L^p$ is proven in~\cite[Corollaries~2.16
    and~4.10]{Ouh05}. Consistency of the $L^p$ semigroups is shown in~\cite[Ch.~1.4.2]{Ouh05}, whereas
    consistency with the $W^{-1,q}_\bullet$-scale can be found in~\cite[Ch.~4]{tomjo-consist}. That
    $-A_p(\mu)$ is the generator of the $L^p$ semigroups follows from the constitutive
    relation~\eqref{e-constitut} and~\cite[Prop.~2.5]{tomjo-consist}. The mapping properties for real and
    positive functions are from~\cite[Ch.~2.6]{Ouh05}.
  \item The positive operator property for the $W^{-1,q}_\bullet$ operators follows from the same
    property for the $L^p$ operators, cf.~\cite[Thm.~11.5]{auscher}, which then implies well-definedness
    of their fractional powers by~\cite[Ch.~1.15]{triebel}. For the bounded imaginary powers,
    see~\cite{cowl} or~\cite[Cor.~7.24]{Ouh05}.
  \item This is the classical result of Kato~\cite[Ch.~5]{katosr} in conjunction with $A_2(\mu)$ being
    selfadjoint. \qedhere
  \end{enumerate}
\end{proof}


\begin{rem}
  \label{rem:domAp}
  The domain of the operator $A_p(\mu)$ is always equipped with the usual norm
  $\|(A_p(\mu) +1)\cdot\|_{L^p}$, or $\|(A_p(\mu) +1)\cdot \|_{\vecL^p}$ when considered on the space
  $L^p$ or $\vecL^p$, respectively. This means that $\dom A_p(\mu)$ and $\dom\bigl(A_p(\mu) + 1\bigr)$
  coincide as Banach spaces and we will use them interchangeably.
\end{rem}

Observing that the fractional powers of $-\nabla\cdot\mu\nabla +1$ and $A_p(\mu)+1$ are well-defined, the
boundedness of the imaginary powers of $A_p(\mu) + 1$ in particular implies the identity of the domains
of fractional powers $(A_p(\mu) + 1)^\alpha$ with interpolation spaces between $L^p$ and
$\dom(A_p(\mu) + 1)$, see~\cite[Ch.~1.15.3]{triebel} or~\cite[Ch.~4.6/4.7]{A95}. We devote a subchapter
to the special fractional powers which we need in the following.

\subsection{Fractional powers of the elliptic operators}
\label{sec:fracpower}

In this section, we ultimately establish the embedding
\begin{equation}
  \dom \bigl ((A_p(\mu)+1)^{\frac {1}{2} + \frac {d}{2q}} \bigr )\embeds W^{1,q}\label{eq:crucEmbedding}
\end{equation}
for some $q > d$ with $p \geq \frac{q}{2}$,
cf.~Theorem~\ref{t-multiplier} below. The main tool here, which will be the ``anchor'' in the derivation
of~\eqref{eq:crucEmbedding}, is the precise information on the domain of definition of the square root of
the operators $-\nabla\cdot\mu\nabla+1$, cf.\ Proposition~\ref{p-wurz}, together with the following
assumption, which essentially allows to ``lift'' the obtained regularity to sufficiently high levels:
\begin{assu} \label{a-reg} There is a $q > d$ such that
  \begin{equation} \label{e-topisoq} -\nabla\cdot\nabla+1\colon W^{1,q} \to W_\bullet ^{-1,q}
  \end{equation}
  provides a topological isomorphism, the operator being defined as in
  Definition~\ref{d-coeff}. Equivalently,~\eqref{e-topisoq} being a continuous isomorphism means that
  $\dom_{W^{-1,q}_\bullet}(-\nabla\cdot\nabla + 1)$ is exactly $W^{1,q}$.
\end{assu}

We suppose Assumption~\ref{a-reg} to be satisfied for the rest of this work and fix the corresponding
number $q \in \iv{d,4}$.

Since Assumption~\ref{a-reg} in fact implicitly determines the class of admissible domains, an
(extensive) comment on this should be in order:

\begin{rem} \label{r-commentdomain}
  \begin{enumerate}[i)]
  \item In case of $d=2$, the assumption is fulfilled for any Lipschitz domain $\Omega$. This is the main
    result in the classical paper~\cite{groeger89}, there even established for mixed boundary conditions.
  \item It is exactly this condition which---besides the \emph{a priori} required Lipschitz
    property---puts a restriction on the geometry of the underlying domain $\Omega$ in three spatial
    dimensions in this paper. For $d = 3$, it is known that Assumption~\ref{a-reg} holds true in case of
    \emph{strong} Lipschitz domains $\Omega$, cf.~\cite{Zanger}. Moreover, it is also true for Lipschitz
    domains $\Omega$ whose closures form---generally nonconvex---polyhedrons, cf.~\cite{Ziegler}. Note
    that this latter class is, by far, \emph{not} contained in the class of strong Lipschitz domains, as
    the (topologically regularized) double beam shows.
  \item Assumption~\ref{a-reg} is also fulfilled for domains which are obtained locally as $C^1$
    deformations of the ones mentioned before.
  \item It is well-known that, even for strong Lipschitz domains, the admissible index $q$ exceeds $3$ by
    an arbitrarily small margin only, cf.~\cite[Introduction]{Zanger}, cf.\ also~\cite[Thm.~A]{jer/ke}.
    In case of $C^1$-domains $\Omega$, $q$ may be chosen arbitrarily large (cf.~\cite[Section~15]{ADN}
    or~\cite[p.~156--157]{morr}); but if one admits polyhedral domains the isomorphism index $q$ cannot
    be expected to be larger than $4$ in general, since edge and corner singularities
    appear,~cf.~\cite{Dauge1},~\cite{Dauge2}. See also~\cite{mercier} and~\cite[Appendix]{HKR} for sharp
    estimates of edge singularities.
  \item If $\phi$ is a uniformly continuous function on $\Omega$ with a positive lower bound, then
    Assumption~\ref{a-reg} implies that
    \begin{equation} -\nabla \cdot \phi \nabla+1\colon W^{1,q} \to W^{-1,q}_\bullet\label{e-isomro}
    \end{equation}
    is also a topological isomorphism, cf.~\cite[Ch.~6]{disser}.
  \end{enumerate}
\end{rem}

Altogether, this shows that Assumption~\ref{a-reg} is fulfilled for a fairly rich class of domains which
should cover almost all interesting constellations in the applications.


The following recent result on the regularity properties of the square root of
$-\nabla \cdot \mu \nabla +1$ is, in cooperation with the isomorphism~\eqref{e-topisoq}, the central
instrument for deriving estimates for suitable fractional powers of the differential operators.

\begin{proposition} \label{p-wurz} Let $\mu$ denote any real, measurable function on $\Omega$ which is
  bounded from below and above by positive constants.
  \begin{enumerate}[i)]
  \item The isomorphism $(A_2(\mu) + 1)^{-\frac {1}{2}}\colon L^2 \to W^{1,2}$, cf.\
    Proposition~\ref{prop:collection-Ap}, continuously extends to an isomorphism from $L^p$ onto
    $W^{1,p}$ for $p \in \iv{1,2}$. Hence, 
    the operator
    ${(A_p(\mu)+1)}^{\frac{1}{2}}$ provides a topological isomorphism between the spaces $W^{1,p}$ and
    $L^p$, or, in other words: $\dom( {A_p(\mu)+1})^{\frac{1}{2}}=W^{1,p}$, for all
    $p \in \iv{1,2}$.
  \item $({-\nabla \cdot \mu \nabla +1})^{\frac{1}{2}}$ provides a topological isomorphism between the
    spaces $L^p$ and $W^{-1,p}_\bullet$, in other words:
    $\dom_{W^{-1,p}_\bullet}( {-\nabla \cdot \mu \nabla +1})^{\frac{1}{2}}= L^p$, for all
    $p \in [2,\infty[$.
  \item We have
    \begin{equation}
      \label{eq:domAtheta}
      \dom \bigl ((A_p(\mu) +1)^{\frac{\theta}{2}}\bigr )= H^{\theta,p}
    \end{equation}
    for
    $p \in \mathopen]1,2\mathclose]$ and $\theta \in \iv{0,1} \setminus \{\frac {1}{p}\}$.
  \end{enumerate}
\end{proposition}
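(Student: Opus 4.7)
\emph{Strategy.} The anchor is the Kato square root identity from Proposition~\ref{prop:collection-Ap}(v), which provides the topological isomorphism $(A_2(\mu)+1)^{\frac12}\colon W^{1,2}\to L^2$. The plan is to propagate this downward on the $L^p$-scale to obtain (i), dualize it to obtain (ii), and then complex-interpolate in the scale of fractional domains to obtain (iii).

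For (i), I would invoke the square root estimate in~\cite[Thm.~5.1]{auscher}, which, specialized to the real symmetric uniformly elliptic form~\eqref{e-0815} on a Lipschitz domain, yields the two-sided bound
\begin{equation*}
  c_1 \|\psi\|_{W^{1,p}}\leq \bigl\|(A_2(\mu)+1)^{\frac12}\psi\bigr\|_{L^p}\leq c_2 \|\psi\|_{W^{1,p}}
\end{equation*}
for $\psi$ in a dense subspace of $W^{1,p}\cap W^{1,2}$ and for every $p\in\iv{1,2}$. Since the semigroups and hence their fractional powers are consistent across the $L^p$-scale (Proposition~\ref{prop:collection-Ap}(iii)), the continuous extension by density of $(A_2(\mu)+1)^{\frac12}$ to $W^{1,p}$ coincides with $(A_p(\mu)+1)^{\frac12}$, yielding the required isomorphism $W^{1,p}\to L^p$; the endpoint $p=2$ is Kato's case itself.

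For (ii), I would argue by duality. For $p\in[2,\infty\iv{$ one has $p'\in\iv{1,2}$ (or $p'=2$, covered by Kato), so by (i) the operator $(A_{p'}(\mu)+1)^{\frac12}\colon W^{1,p'}\to L^{p'}$ is a topological isomorphism. Passing to anti-duals and using the identifications $(W^{1,p'})'=W^{-1,p}_\bullet$ and $(L^{p'})'=L^p$ produces an isomorphism $L^p\to W^{-1,p}_\bullet$. Since $\mu$ is real, the form~\eqref{e-0815} is hermitian, so the constitutive relation~\eqref{e-constitut} identifies $-\nabla\cdot\mu\nabla+1$ on $W^{-1,p}_\bullet$ as the anti-adjoint of $A_{p'}(\mu)+1$ on $L^{p'}$. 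Bounded imaginary powers (Proposition~\ref{prop:collection-Ap}(iv)) make fractional powers commute with this anti-adjoint, so the dualized isomorphism is precisely $(-\nabla\cdot\mu\nabla+1)^{\frac12}$.

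For (iii), I would use complex interpolation. Combining (i), the trivial $\dom((A_p(\mu)+1)^0)=L^p$, and the bounded imaginary powers of $A_p(\mu)+1$ (Proposition~\ref{prop:collection-Ap}(iv)), Triebel's theorem~\cite[Ch.~1.15.3]{triebel} delivers
\begin{equation*}
  \dom\bigl((A_p(\mu)+1)^{\frac{\theta}{2}}\bigr)=[L^p,W^{1,p}]_\theta,\qquad \theta\in\iv{0,1}.
\end{equation*}
The classical identification $[L^p,W^{1,p}]_\theta=H^{\theta,p}$ then closes the argument for $\theta\neq 1/p$; the exceptional value is excluded in the statement because of known boundary-trace subtleties at the borderline. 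The hardest step I anticipate is the identification in (ii): the two candidate operators live on different ambient spaces, and one must justify with care that the anti-dual of the square root equals the square root of the anti-adjoint, which is exactly where bounded imaginary powers enter as the crucial tool.
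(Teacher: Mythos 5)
Your proposal is correct and follows essentially the same route as the paper: part (i) is precisely the citation of \cite[Thm.~5.1]{auscher}, part (ii) is the same duality argument resting on the selfadjointness of $A_2(\mu)$, and part (iii) is the same combination of bounded imaginary powers with \cite[Ch.~1.15.3]{triebel} and the identification $[L^p,W^{1,p}]_\theta = H^{\theta,p}$ (for which the paper cites \cite[Thm.~3.1]{ggkr} rather than treating it as classical, since $\Omega$ is only Lipschitz). The only cosmetic difference is that you invoke bounded imaginary powers to commute the square root with the anti-adjoint in (ii), whereas this already follows from sectoriality together with selfadjointness of $A_2(\mu)$, as the paper's one-line argument indicates.
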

\begin{proof}
  i)~is the main result in~\cite{auscher}, cf.\ Thm.~5.1 there. ii)~follows from~i) by duality because
  $A_2(\mu)$ is selfadjoint on $L^2$, see Proposition~\ref{prop:collection-Ap}. iii)~ Since $A_p(\mu)+1$
  admits bounded imaginary powers (again, Proposition~\ref{prop:collection-Ap}),
  \begin{equation*}
    \dom \bigl ((A_p(\mu) +1)^{\frac{\theta}{2}}\bigr )=
    \bigl[L^p,\dom( {A_p(\mu) +1})^{\frac{1}{2}}\bigr]_\theta
  \end{equation*}
  follows from~\cite[Ch.~1.15.3]{triebel}. By~i), the latter is equal to $[L^p,W^{1,p}]_\theta$, and this
  space is exactly $H^{\theta,p}$ as proved in~\cite[Thm.~3.1]{ggkr}.
\end{proof}

\begin{lemma} \label{l-wurzcoroll} Let $\phi$ denote any real, uniformly continuous function on $\Omega$
  which is bounded from below by a positive constant.  Then, under Assumption~\ref{a-reg},
  $\bigl({A_p(\phi) +1}\bigr)^{\frac{1}{2}}$ provides a topological isomorphism between $W^{1,p}$ and
  $L^p$ for all $p \in \mathopen]2,q]$, and~\ref{eq:domAtheta} for $\mu = \phi$ holds true for this range
  of $p$ as well.
\end{lemma}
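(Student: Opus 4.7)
The plan is to exploit Assumption~\ref{a-reg} at the endpoint $p=q$ by means of a factorization identity, and then to interpolate down to $p\in\mathopen]2,q\mathclose[$; the fractional power identification follows, as in the proof of Proposition~\ref{p-wurz}(iii), from bounded imaginary powers together with the identification of complex interpolation spaces with Bessel potential spaces.

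First I would work at the endpoint $p=q$. By Remark~\ref{r-commentdomain}(v), Assumption~\ref{a-reg} yields that $\mathcal A:=-\nabla\cdot\phi\nabla+1$ is a topological isomorphism from $W^{1,q}$ onto $W^{-1,q}_\bullet$; equivalently (Definition~\ref{d-coeff}), $\mathcal A$ regarded as a closed operator on $W^{-1,q}_\bullet$ has domain exactly $W^{1,q}$. On the other hand, Proposition~\ref{p-wurz}(ii) gives that $\mathcal A^{1/2}\colon L^q\to W^{-1,q}_\bullet$ is an isomorphism, i.e., $\dom_{W^{-1,q}_\bullet}(\mathcal A^{1/2})=L^q$. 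The general factorization $\mathcal A=\mathcal A^{1/2}\circ\mathcal A^{1/2}$ on $\dom(\mathcal A)=W^{1,q}$ thus forces $\mathcal A^{1/2}\psi\in L^q$ for every $\psi\in W^{1,q}$, together with
\begin{equation*}
  \|\mathcal A^{1/2}\psi\|_{L^q}\asymp \|\mathcal A^{1/2}(\mathcal A^{1/2}\psi)\|_{W^{-1,q}_\bullet}=\|\mathcal A\psi\|_{W^{-1,q}_\bullet}\asymp \|\psi\|_{W^{1,q}},
\end{equation*}
so $\mathcal A^{1/2}\colon W^{1,q}\to L^q$ is itself an isomorphism. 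By consistency of the semigroups on the $L^q$- and $W^{-1,q}_\bullet$-scale (Proposition~\ref{prop:collection-Ap}(iii)), their fractional powers are consistent as well; in particular $\mathcal A^{1/2}$ restricted to $L^q$ coincides with $(A_q(\phi)+1)^{1/2}$, yielding the isomorphism $(A_q(\phi)+1)^{1/2}\colon W^{1,q}\to L^q$.

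For the intermediate range $p\in\mathopen]2,q\mathclose[$, I would complex-interpolate the just-obtained endpoint isomorphism at $p=q$ with the Kato endpoint $(A_2(\phi)+1)^{1/2}\colon W^{1,2}\to L^2$ from Proposition~\ref{prop:collection-Ap}(v). Both the operators and their inverses form consistent families on the intersection in view of Proposition~\ref{prop:collection-Ap}(iii), so standard complex interpolation yields an isomorphism $[W^{1,2},W^{1,q}]_\theta\to[L^2,L^q]_\theta$ for every $\theta\in\mathopen]0,1\mathclose[$. The Lipschitz extension property of $\Omega$ gives $[W^{1,2},W^{1,q}]_\theta=W^{1,p}$, and trivially $[L^2,L^q]_\theta=L^p$, where $\tfrac1p=\tfrac{1-\theta}{2}+\tfrac{\theta}{q}$. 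Consistency once more identifies the interpolated operator with $(A_p(\phi)+1)^{1/2}$, completing the isomorphism claim for all $p\in\mathopen]2,q]$.

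Finally, for~\ref{eq:domAtheta} in the range $p\in\mathopen]2,q]$ and $\theta\in\mathopen]0,1\mathclose[\setminus\{1/p\}$, I would repeat the argument in the proof of Proposition~\ref{p-wurz}(iii) verbatim: the bounded imaginary powers of $A_p(\phi)+1$ (Proposition~\ref{prop:collection-Ap}(iv)) together with~\cite[Ch.~1.15.3]{triebel} give $\dom((A_p(\phi)+1)^{\theta/2})=[L^p,W^{1,p}]_\theta$, and the latter equals $H^{\theta,p}$ by~\cite[Thm.~3.1]{ggkr}. The main obstacle is the careful coupling of the $L^p$- and $W^{-1,p}_\bullet$-scales through the consistency of semigroups and their fractional powers; this, however, is already packaged into Proposition~\ref{prop:collection-Ap}(iii), so that no new deep regularity ingredient beyond Proposition~\ref{p-wurz} and Assumption~\ref{a-reg} enters the argument.
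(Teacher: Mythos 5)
Your proof is correct, and it rests on exactly the same two pillars as the paper's: the elliptic isomorphism \eqref{e-isomro} supplied by Remark~\ref{r-commentdomain}\,v) under Assumption~\ref{a-reg}, and the square-root isomorphism $({-\nabla\cdot\phi\nabla+1})^{1/2}\colon L^p \to W^{-1,p}_\bullet$ from Proposition~\ref{p-wurz}\,ii), combined via the factorization $\mathcal A = \mathcal A^{1/2}\circ\mathcal A^{1/2}$; the treatment of~\eqref{eq:domAtheta} is verbatim the paper's. The one genuine (if minor) difference is where the interpolation is placed: the paper first interpolates the elliptic isomorphism itself, obtaining $-\nabla\cdot\phi\nabla+1\colon W^{1,p}\to W^{-1,p}_\bullet$ for all $p\in[2,q]$ and then extracts the square root at each such $p$, whereas you perform the square-root extraction only at the endpoint $p=q$ and then interpolate the resulting family $(A_p(\phi)+1)^{1/2}$ between the Kato endpoint $p=2$ and $p=q$. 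Your variant trades the interpolation identity $[W^{-1,2}_\bullet,W^{-1,q}_\bullet]_\theta=W^{-1,p}_\bullet$ on the negative-order scale for the positive-order identity $[W^{1,2},W^{1,q}]_\theta=W^{1,p}$ (available by the extension property of $\Omega$) plus the consistency of the square roots across the $L^p$-scale, which indeed follows from the semigroup consistency in Proposition~\ref{prop:collection-Ap}\,iii) via the standard integral representation of $(A_p(\phi)+1)^{-1/2}$. Both routes are sound and of essentially equal depth; yours has the small advantage of interpolating only between Lebesgue and first-order Sobolev spaces.
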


\begin{proof}
  First of all, Remark~\ref{r-commentdomain} tells us that under the given supposition on $\phi$,
  Assumption~\ref{a-reg} implies the isomorphism property~\eqref{e-isomro}, which then also holds true
  for all $p \in [2,q]$ due to interpolation. Having this at hand, the isomorphism property for the
  square root operators follows in a straight forward manner from Proposition~\ref{p-wurz}~ii) for
  $\mu = \phi$, see also~\cite[Thm.~6.5]{TomKaroJoDiffEq}. This also implies~\eqref{eq:domAtheta} for
  $p \in \mathopen]2,q]$ with the same proof as in Proposition~\ref{p-wurz}.
\end{proof}

The square root isomorphisms and identity~\eqref{eq:domAtheta} from Lemma~\ref{l-wurzcoroll} have the
following immediate consequence:

\begin{theorem} \label{t-multiplier} Let $\phi$ denote any real, uniformly continuous function on
  $\Omega$ which is bounded from below by a positive constant.  Then, for every $p \ge \frac {q}{2}$ one
  has the embedding
  \begin{equation} \label{eq:embedddd} \dom \bigl ((A_p(\phi)+1)^{\frac {1}{2} + \frac {d}{2q}} \bigr
    )\embeds W^{1,q}, \end{equation} which implies
  \begin{equation}
    \label{e-embeddd}
    \bigl (L^p,\dom(A_p(\phi))\bigr)
    _{\theta,1}\embeds W^{1,q},
  \end{equation}
  for all $\theta \in \bigl[\frac12+\frac {d}{2q},1\bigr[$.
\end{theorem}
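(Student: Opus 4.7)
The plan is to construct the embedding by factoring the inverse fractional power $\bigl(A_p(\phi)+1\bigr)^{-\frac{1}{2}-\frac{d}{2q}}$ as a composition of two bounded maps that naturally bridge the $L^p$ and $L^q$ scales, exploiting Sobolev embedding of Bessel potential spaces at the critical index together with the consistency of the operators $A_r(\phi)$ across different~$r$.

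Concretely, set $T := A_p(\phi)+1$. Using the semigroup property $T^{-\frac{1}{2}-\frac{d}{2q}} = T^{-\frac{1}{2}} \circ T^{-\frac{d}{2q}}$ for the positive operator $T$ with bounded imaginary powers (Proposition~\ref{prop:collection-Ap}~iv)), I would proceed as follows. By Proposition~\ref{p-wurz}~iii) and its extension in Lemma~\ref{l-wurzcoroll}, the factor $T^{-\frac{d}{2q}}$ is a topological isomorphism from $L^p$ onto $H^{d/q,p}$, since $d/q \in \iv{0,1}$ as $q>d$. The classical Sobolev embedding for Bessel potential spaces then yields $H^{d/q,p} \embeds L^q$ precisely when $d/q \ge d/p - d/q$, i.e.\ when $p \ge q/2$. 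Hence $T^{-\frac{d}{2q}}$ in fact maps $L^p$ continuously into $L^q$.

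For the remaining factor $T^{-\frac{1}{2}}$, Lemma~\ref{l-wurzcoroll} applied at $p=q$ produces the topological isomorphism $\bigl(A_q(\phi)+1\bigr)^{-\frac{1}{2}}\colon L^q \to W^{1,q}$. The consistency of the semigroups $\{\sg{-tA_r(\phi)}\}_{t\ge 0}$ across~$r$ asserted in Proposition~\ref{prop:collection-Ap}~iii) transfers via functional calculus to the fractional powers, so that $\bigl(A_p(\phi)+1\bigr)^{-\frac{1}{2}}$ and $\bigl(A_q(\phi)+1\bigr)^{-\frac{1}{2}}$ coincide on $L^p \cap L^q$. Chaining the bounded maps
\[
L^p \xrightarrow{T^{-d/(2q)}} H^{d/q,p} \embeds L^q \xrightarrow{(A_q(\phi)+1)^{-1/2}} W^{1,q}
\]
shows that $T^{-\frac{1}{2}-\frac{d}{2q}}$ maps $L^p$ continuously into $W^{1,q}$, which is exactly~\eqref{eq:embedddd}. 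For $p>q$, the bounded-domain inclusion $L^p \embeds L^q$ combined with consistency yields $\dom(T^{\frac{1}{2}+\frac{d}{2q}}) \subseteq \dom\bigl((A_q(\phi)+1)^{\frac{1}{2}+\frac{d}{2q}}\bigr)$, reducing the claim to the case $p=q$.

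The real-interpolation embedding~\eqref{e-embeddd} then follows from the standard inclusion $(L^p,\dom A_p(\phi))_{\theta,1} \embeds [L^p,\dom A_p(\phi)]_\theta$, the identification of the latter with $\dom\bigl(T^{\theta}\bigr)$ via bounded imaginary powers and~\cite[Ch.~1.15.3]{triebel}, and monotonicity of this domain scale in $\theta$. The main obstacle is the Sobolev embedding $H^{d/q,p} \embeds L^q$ at the critical $p=q/2$, which is sharp and pins down the precise exponent $\tfrac{1}{2}+\tfrac{d}{2q}$ in the statement. A secondary technical care-point arises in dimension $d=2$ at the exceptional index $1/p = d/q$ (i.e.\ exactly $p=q/2$), where Proposition~\ref{p-wurz}~iii) does not directly identify $\dom(T^{d/(2q)})$ with $H^{d/q,p}$; this gap can be bridged by first establishing the embedding for fractional parameters slightly above $d/(2q)$ and passing to the limit using density and monotonicity of both scales.
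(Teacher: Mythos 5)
Your argument is essentially the paper's own proof: the identical factorization of $(A_p(\phi)+1)^{-(\frac12+\frac{d}{2q})}$ through $(A_q(\phi)+1)^{-\frac12}\in\LL(L^q;W^{1,q})$ and $(A_p(\phi)+1)^{-\frac{d}{2q}}\in\LL(L^p;L^q)$, the latter obtained via $\dom\bigl((A_p(\phi)+1)^{\frac{d}{2q}}\bigr)=H^{\frac{d}{q},p}\embeds L^q$ for $p\in\bigl[\frac q2,q\bigr]$ and via $L^p\embeds L^q$ for $p>q$, followed by the same passage $\bigl(L^p,\dom A_p(\phi)\bigr)_{\theta,1}\embeds\bigl[L^p,\dom A_p(\phi)\bigr]_{\frac12+\frac{d}{2q}}$ using bounded imaginary powers. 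The only point of divergence is your (legitimate) observation about the exceptional index $\theta=1/p$ when $d=2$ and $p=\frac q2$, which the paper silently skips; note, however, that your proposed limiting argument goes the wrong way---since $\dom(T^{\alpha})\embeds\dom(T^{\frac{d}{2q}})$ for $\alpha>\frac{d}{2q}$, embeddings of the \emph{smaller} domains into $L^q$ say nothing about the larger one---whereas the clean repair is that for pure Neumann conditions the interpolation identity $[L^p,W^{1,p}]_\theta=H^{\theta,p}$ holds for \emph{all} $\theta\in\iv{0,1}$, the exclusion of $\theta=1/p$ in Proposition~\ref{p-wurz}~iii) being an artifact of the mixed-boundary-condition setting of its source.
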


\begin{proof}
  The bounded imaginary powers of $A_p(\phi) + 1$, cf.\ Proposition~\ref{prop:collection-Ap}, imply that
  \begin{equation*}
   \bigl (L^p,\dom(A_p(\phi)+1)\bigr)_{\theta,1} 
    \embeds  \bigl [L^p,\dom(A_p(\phi)+1)\bigr]_{\frac12+\frac {d}{2q}} 
  =    \dom\bigl((A_p(\phi)+1)^{\frac12+\frac {d}{2q}}\bigr)
  \end{equation*}
  for all $\theta \in \bigl[\frac12+\frac {d}{2q},1\bigr[$, see~\cite[Ch.~1.15.3]{triebel}. In this
  sense,~\eqref{e-embeddd} is a direct consequence of~\eqref{eq:embedddd}, modulo identification of
  $\dom A_p(\phi)$ and $\dom\bigl(A_p(\phi)+1\bigr)$.
  We show that~\eqref{eq:embedddd} holds true by
  proving that $(A_p(\phi)+1)^{-(\frac {1}{2} + \frac {d}{2q})}$ is a continuous linear operator from $L^p$ to $W^{1,q}$ for
  these $\theta$. We split the operator as follows:
  \begin{multline}
    \bigl\|(A_p(\phi) +1)^{-(\frac {1}{2} + \frac {d}{2q})}\bigr\|_{\LL(L^p;W^{1,q})} \\ \le \bigl\|(A_q(\phi) +1)^{-\frac
      {1}{2}}\bigr\|_{\LL(L^q;W^{1,q})}
    \bigl\|(A_p(\phi)+1)^{-\frac {d}{2q}}\bigr\|_{\LL(L^p;L^q)}.
    \label{e-inversepow}
  \end{multline}
  Thanks to Lemma~\ref{l-wurzcoroll}, it remains to show that $(A_p(\phi)+1)^{-\frac{d}{2q}}$ is a
  continuous linear operator from $L^p$ to $L^q$. We show that
  $\dom\bigl((A_p(\phi)+1)^{\frac{d}{2q}}\bigr) \embeds L^q$. For $p > q$, we always have
  (cf.~\cite[Thm.~1.15.2]{triebel})
    \begin{equation*}
       \dom\bigl((A_p(\phi)+1)^{\theta}\bigr)  \embeds
      \bigl(L^p,\dom(A_p(\phi)+1)\bigr)_{\theta,\infty} \embeds L^p \embeds L^q.
    \end{equation*}
    For $p \in \bigl[\frac{q}2,q\bigr]$ in turn, Proposition~\ref{p-wurz} and Lemma~\ref{l-wurzcoroll}
    yield $\dom((A_p(\phi) + 1)^{\frac{d}{2q}}) = H^{\frac{d}{q},p}$ which exactly embeds into
    $L^q$. Hence, $(A_p(\phi)+1)^{-\frac{d}{2q}} \in \LL(L^p;L^q)$ in both cases, and
    from~\eqref{e-inversepow} we obtain that
  \[
    \dom \bigl ((A_p(\phi)+1)^{\frac12+\frac {d}{2q}} \bigr ) \embeds W^{1,q},
  \]
  which was the claim.
\end{proof}

\subsection{Maximal parabolic regularity and consequences for nonlinear problems}
We next introduce preparatory concepts and results concerning parabolic operators.  Throughout the rest
of this paper let $T > 0$ and set $J = \iv{0,T}$.  First, we introduce the Bochner-Sobolev spaces.

\begin{definition}
  If $X$ is a Banach space and $r \in \iv{1,\infty}$, then we denote by $L^r(J;X)$ the space of
  $X$-valued functions $f$ on $J$ which are Bochner-measurable and for which $\int_J\|f(t)\|_X^r\dd t$ is
  finite.  We define the Bochner-Sobolev spaces
  \begin{equation*}
    W^{1,r}(J;X):=\bigl\{u \in L^r(J;X)\colon u' \in L^r(J;X)\bigr\},
  \end{equation*}
  where $u'$ is to be understood as the time derivative of $u$ in the sense of $X$-valued distributions
  (cf.~\cite[Section~III.1]{A95}).  Moreover, we introduce the subspace of functions with initial value
  zero $W_0^{1,r}(J;X):= \{ \psi \in W^{1,r}(J;X) \colon \psi(0)=0\}$.
\end{definition}

Let us define a suitable notion of maximal parabolic regularity in the non-autonomous case and point out
some basic facts on this:
\begin{definition} 
  Let $X$, $D$ be Banach spaces with $D$ densely embedded in $X$.  Let
  $J \ni t \mapsto \mathcal A(t) \in \LL(D;X)$ be a bounded and measurable map and suppose that
  the operator $\mathcal A(t)$ is closed in $X$ for all $t \in J$.  Let $r \in \iv{1,\infty}$.  Then we
  say that the family $\{\mathcal A(t)\}_{t \in J}$ satisfies \emph{(non-autonomous) maximal parabolic
    $L^r(J;D,X)$-regularity}, if for any $f \in L^r(J;X)$ there is a unique function
  $u \in L^r(J;D) \cap W_0^{1,r}(J;X)$ which satisfies
  \begin{equation} \label{e-0paragleich} u'(t) +\mathcal A(t)u(t)=f(t)
  \end{equation}
  for almost all $t \in J$.  We write
  \[
    \mr^r(J;D,X) := L^r(J;D) \cap W^{1,r}(J;X)\] and \[ \mr^r_0(J;D,X) := L^r(J;D) \cap W_0^{1,r}(J;X)
  \]
  for the spaces of maximal parabolic regularity.
  From the open mapping theorem, we further obtain that there exists a constant $c$ such that
  \begin{equation}
    \label{eq:MPRestimate}
    \|u\|_{\mr^r_0(J;D,X)} \leq c \|f\|_{L^r(J;X)}
  \end{equation}
  for all $f \in L^r(J;X)$ and $u$ being the associated unique solution of~\eqref{e-0paragleich}.
\end{definition}

If all operators $\mathcal A(t)$ are equal to one (fixed) operator $\mathcal A_0$, and there exists an
$r \in \iv{1,\infty}$ such that $\{\mathcal A(t)\}_{t \in J}$ satisfies maximal parabolic
$L^r(J;D,X)$-regularity, then $\{\mathcal A(t)\}_{t \in J}$ satisfies maximal parabolic
$L^s(I;D,X)$-regularity for all $s \in \iv{1,\infty}$ and all other (finite) intervals $I$
(cf.~\cite{dore}), and we say that $\mathcal A_0$ satisfies \emph{maximal parabolic regularity on $X$}.

The following embedding result for the spaces of maximal parabolic regularity is essentially used in the
sequel.

\begin{lemma} \label{l-maxparregfacts} Let $X, Y$ be two Banach spaces, with dense embedding
  $Y \embeds X$, and let $r \in \iv{1,\infty}$.
  \begin{enumerate}[i)]
  \item There is an embedding
    \begin{equation} \label{e-embedcont} \mr^r(J;Y,X) \embeds C\bigl(\overline{J}; (X,
      Y)_{1-\frac{1}{r},r}\bigr).
    \end{equation}
  \item Conversely, if the operator $A$ generates an analytic semigroup on the Banach space $X$ with $Y$
    as its domain, and $\psi \in (X, Y)_{1-\frac{1}{s},s}$, then the function $\sg{\cdot A}\psi$ belongs
    to $W^{1,s}(J; X) \cap L^s(J;Y)$ for every bounded interval interval $J=[0,T[$.
  \item There is an embedding
    \begin{equation} \label{e-maxreghoelderembed} \mr^r(J;Y,X) \embeds
      C^\alpha\bigl(J;(X,Y)_{\varrho,1}\bigr)
    \end{equation}
    where $0 < \alpha = 1-\varrho - \frac {1}{r}$.
  \end{enumerate}
\end{lemma}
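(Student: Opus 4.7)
All three claims are classical facts in the theory of abstract parabolic evolution equations and may be extracted from Amann's monograph~\cite{A95}. Everything depends only on the abstract pair $X, Y$, so the proof strategy rests fundamentally on the identification of real interpolation spaces either with trace spaces of vector-valued Sobolev functions or, for (ii), via the analytic semigroup generated by $-A$.

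For (i), the plan is to identify $(X,Y)_{1-1/r,r}$ as the trace space of $\mr^r(J;Y,X)$ at $t=0$ and then transport this identification to every $t_0 \in \overline J$. I would first extend an arbitrary $u \in \mr^r(J;Y,X)$ to a function $\tilde u \in \mr^r(\R;Y,X)$ by reflection across $t=0$ together with a smooth cutoff near $t=T$; no boundary condition is imposed at either endpoint, so this extension is continuous in the corresponding norms. Peetre's trace characterization of $(X,Y)_{1-1/r,r}$ then yields the pointwise bound $\|u(t_0)\|_{(X,Y)_{1-1/r,r}} \leq c \,\|u\|_{\mr^r(J;Y,X)}$ uniformly in $t_0 \in \overline J$ after translating in $t$. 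Continuity of $t \mapsto u(t)$ with values in the interpolation space then follows by density of, for instance, $C^\infty(\overline J;Y)$ in $\mr^r(J;Y,X)$ combined with the uniform estimate just derived.

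For (ii), I would invoke the semigroup characterization of the real interpolation space: for an analytic semigroup with generator $-A$ on $X$ whose domain is $Y$, one has $\psi \in (X,Y)_{1-1/s,s}$ if and only if $t \mapsto A\sg{-tA}\psi$ lies in $L^s((0,T);X)$, with equivalent norms. This yields $A\sg{-\cdot A}\psi \in L^s(J;X)$ immediately, and since $\dom(A)=Y$ this reads $\sg{-\cdot A}\psi \in L^s(J;Y)$. Analyticity of the semigroup together with the identity $\tfrac{\mathrm d}{\mathrm dt}\sg{-tA}\psi = -A\sg{-tA}\psi$ then delivers $\sg{-\cdot A}\psi \in W^{1,s}(J;X)$, completing the claim.

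For (iii), the H\"older embedding will follow from (i) via a scaling argument. Given $s<t$ in $J$, I would consider $v(\tau) := u(s+(t-s)\tau)-u(s)$ on $\tau\in[0,1]$, which belongs to $\mr^r_0([0,1];Y,X)$ with norm bounded by a power of $(t-s)$ times $\|u\|_{\mr^r(J;Y,X)}$; applying the trace estimate from (i) at $\tau=1$ then bounds $\|u(t)-u(s)\|_{(X,Y)_{1-1/r,r}}$ up to a $(t-s)$-dependent factor. To pick up the sharp H\"older exponent, one passes from $(X,Y)_{1-1/r,r}$ to the strictly smaller space $(X,Y)_{\varrho,1}$ with $\varrho<1-1/r$ via the standard embedding between real interpolation spaces, which absorbs precisely the scaling loss and produces $\alpha = 1-\varrho-1/r$. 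The main obstacle is the careful bookkeeping of the scaling powers of $(t-s)$ against the interpolation parameters; this is essentially the computation carried out in Amann's book, which I would follow rather than redo in detail.
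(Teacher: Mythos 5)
Your treatments of i) and ii) are correct and follow exactly the routes the paper itself points to: the paper gives no argument of its own but cites Amann's trace-space characterization of $(X,Y)_{1-\frac1r,r}$ for i) and Lunardi's semigroup characterization of the real interpolation space for ii), which is precisely what you reconstruct. (For ii), note that with $\theta=1-\frac1s$ and exponent $s$ the weight $t^{1-\theta-\frac1s}$ is identically $1$, so your ``$\psi\in(X,Y)_{1-\frac1s,s}$ iff $A\sg{\cdot A}\psi\in L^s(J;X)$'' is indeed the statement being used.)

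Part iii) contains a genuine gap. First, the rescaled function $v(\tau)=u(s+(t-s)\tau)-u(s)$ does not in general lie in $\mr^r_0([0,1];Y,X)$: from i) you only know $u(s)\in(X,Y)_{1-\frac1r,r}$, not $u(s)\in Y$, so $v(\tau)$ need not belong to $Y$ for a.e.\ $\tau$. Second, even granting this, the scaling goes the wrong way: while $\|v'\|_{L^r([0,1];X)}=(t-s)^{1-\frac1r}\|u'\|_{L^r((s,t);X)}$ gains a positive power, the change of variables in the $Y$-component produces the factor $(t-s)^{-\frac1r}$, a \emph{negative} power, so the $\mr^r$-norm of $v$ is not bounded by any positive power of $(t-s)$ and the trace estimate at $\tau=1$ yields no H\"older modulus. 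The continuous inclusion $(X,Y)_{1-\frac1r,r}\embeds(X,Y)_{\varrho,1}$ cannot repair this, since it is a fixed bounded embedding and cannot trade norms against powers of $(t-s)$. What actually produces $\alpha=1-\varrho-\frac1r$ is a \emph{multiplicative} interpolation inequality obtained from reiteration: for $\theta=\varrho/(1-\frac1r)\in\iv{0,1}$ one has $\bigl(X,(X,Y)_{1-\frac1r,r}\bigr)_{\theta,1}=(X,Y)_{\varrho,1}$ and hence $\|x\|_{(X,Y)_{\varrho,1}}\le c\,\|x\|_X^{1-\theta}\,\|x\|_{(X,Y)_{1-\frac1r,r}}^{\theta}$. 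Applying this to $x=u(t)-u(s)$, using the elementary bound $\|u(t)-u(s)\|_X\le(t-s)^{1-\frac1r}\|u'\|_{L^r(J;X)}$ for the first factor and the uniform bound from i) for the second, gives $\|u(t)-u(s)\|_{(X,Y)_{\varrho,1}}\le c\,(t-s)^{(1-\frac1r)(1-\theta)}\|u\|_{\mr^r(J;Y,X)}=c\,(t-s)^{1-\frac1r-\varrho}\|u\|_{\mr^r(J;Y,X)}$. This is the ``simple proof'' the paper refers to; no rescaling of the time variable is needed.
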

\begin{proof}
  i)~is proved in~\cite[Ch.~4.10]{A95}, ii)~is shown in~\cite[Ch.~2.2.1 Prop.~2.2.2]{luna}, and iii)~is
  proved in~\cite[Ch.~3,~Thm.~3]{A01}, see also~\cite{TomKaroJo} for a simple proof.
\end{proof}

In the immediate context of maximal parabolic regularity, $Y$ is taken as $\dom_X(A)$ equipped with the
graph norm, of course.

\begin{rem}\label{rem:IVpassend}
  The first two points of Lemma~\ref{l-maxparregfacts} together show that the space
  $(X, \dom_X(A))_{1-\frac{1}{r},r}$, is the adequate space of initial values in the framework of maximal
  parabolic regularity.
\end{rem}

Moreover, we need the following results.
\begin{theorem}[{\cite[Thm.~2.5]{schnaubelt}}] \label{t-pruessschnaub} Let the following two suppositions
  be satisfied:
  \begin{enumerate}[(H1)]
  \item
    The family of operators $\{\mathcal A(t)\}_{t \in \overline J}$, acting on a Banach space $X$ has a
    common dense domain $D$ and the mapping
    $\overline J \ni t \mapsto \mathcal A(t) \in \LL(D; X)$ is continuous. Moreover, each operator
    $\mathcal A(\tau)$, $\tau \in \overline J$, generates an analytic semigroup on $X$.
  \item
    For some $r \in \iv{1,\infty}$, every (fixed) $\tau \in [0,T]$ and all $f \in L^r(J;X)$ there is a
    unique element $u \in \mr^r_0(J;D;X)$ which satisfies the equation $u'+\mathcal A(\tau)u =f$.
  \end{enumerate}
  Then $\{\mathcal A(t)\}_{t \in \overline J}$ satisfies maximal parabolic $L^r(J;D,X)$-regularity.
\end{theorem}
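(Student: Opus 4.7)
The plan is to reduce the non-autonomous problem to the autonomous one locally on small subintervals via a perturbation argument, and then to patch the pieces together using the trace-space machinery provided by Lemma~\ref{l-maxparregfacts}.

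First, since $\overline{J}$ is compact and $t \mapsto \mathcal{A}(t) \in \LL(D;X)$ is continuous by~(H1), the map is in fact uniformly continuous. Together with the fact that each $\mathcal{A}(\tau)$ is a generator of an analytic semigroup satisfying maximal parabolic $L^r$-regularity by~(H2), one expects the associated maximal regularity constants $C(\tau)$ in~\eqref{eq:MPRestimate} to admit a uniform upper bound $C_\star$ over $\tau \in \overline{J}$. This uniformity should follow from a standard closed-graph / compactness argument exploiting the continuity of $\tau \mapsto \mathcal A(\tau)$ and the fact that maximal regularity on an interval is independent of the interval length (cf.\ \cite{dore}).

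Having $C_\star$ in hand, I would invoke the classical perturbation theorem for maximal regularity: if $\mathcal{A}_0$ on $X$ with domain $D$ has maximal $L^r$-regularity with constant $C_\star$, and $B \in L^\infty(J; \LL(D;X))$ satisfies $\|B\|_{L^\infty(J;\LL(D;X))} < 1/C_\star$, then $\mathcal{A}_0 + B(\cdot)$ also has maximal parabolic $L^r(J;D,X)$-regularity (this is a straightforward Neumann-series argument in $\mr^r_0(J;D,X)$ using~\eqref{eq:MPRestimate}). Choose $\varepsilon < 1/C_\star$ and, using uniform continuity, a partition $0 = t_0 < t_1 < \ldots < t_N = T$ of $\overline{J}$ such that $\|\mathcal{A}(t) - \mathcal{A}(t_{k-1})\|_{\LL(D;X)} < \varepsilon$ for all $t \in J_k := [t_{k-1}, t_k]$. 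Writing $\mathcal{A}(t) = \mathcal{A}(t_{k-1}) + (\mathcal{A}(t) - \mathcal{A}(t_{k-1}))$ on each $J_k$, the perturbation lemma yields maximal parabolic $L^r(J_k; D, X)$-regularity of the restricted family.

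It remains to glue the local solutions into a global one. Given $f \in L^r(J; X)$, I would proceed inductively: on $J_1$, obtain $u_1 \in \mr^r_0(J_1;D,X)$ solving $u_1' + \mathcal{A}(t)u_1 = f|_{J_1}$ with $u_1(0) = 0$. By Lemma~\ref{l-maxparregfacts}~i), the trace $u_1(t_1)$ lies in the interpolation space $(X,D)_{1-1/r,r}$. On $J_2$, I then need to solve the inhomogeneous problem with nonzero initial value $u_1(t_1)$; this is handled by splitting off the mild solution $t \mapsto \sg{-(t-t_1)\mathcal{A}(t_1)} u_1(t_1)$, which lies in $\mr^r(J_2;D,X)$ by Lemma~\ref{l-maxparregfacts}~ii), and solving the residual problem with zero initial value on $J_2$ using the local non-autonomous maximal regularity just established. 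Concatenating the pieces gives a function $u \in \mr^r_0(J;D,X)$ solving~\eqref{e-0paragleich}; continuity of $u$ at the knots $t_k$ is automatic from~\eqref{e-embedcont}. Uniqueness transfers from the local level by the same partition, so $\{\mathcal A(t)\}$ has maximal parabolic $L^r(J;D,X)$-regularity.

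The main obstacle I anticipate is establishing the uniform bound $C_\star$ on the autonomous maximal regularity constants: the abstract perturbation step and the gluing are essentially bookkeeping, but verifying that $\tau \mapsto C(\tau)$ is bounded on $\overline J$ requires either a delicate graph-norm-continuity argument for the resolvents or an appeal to a dedicated non-autonomous result (as in \cite{schnaubelt}). Without that uniformity the partition in the second step cannot be made finite.
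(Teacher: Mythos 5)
The paper does not prove this statement at all---it is quoted verbatim from Pr\"uss and Schnaubelt \cite[Thm.~2.5]{schnaubelt} and used as a black box---so there is no in-paper proof to compare against. Your sketch reconstructs essentially the argument of that reference: freeze coefficients, perturb on small subintervals, glue via the trace space $(X,D)_{1-1/r,r}$ using Lemma~\ref{l-maxparregfacts}. The structure is sound, and the one obstacle you flag honestly (a uniform bound $C_\star$ on the autonomous maximal regularity constants $C(\tau)$) is in fact self-resolving from the ingredients you already have: the Neumann-series perturbation lemma shows that whenever $\|\mathcal A(\sigma)-\mathcal A(\tau)\|_{\LL(D;X)} < 1/(2C(\tau))$ one gets $C(\sigma) \le 2C(\tau)$, so continuity of $\tau\mapsto\mathcal A(\tau)$ makes $\tau\mapsto C(\tau)$ locally bounded, and compactness of $\overline J$ then yields $C_\star$ from a finite subcover---no closed-graph argument and no appeal back to \cite{schnaubelt} is needed. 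Two minor points to tighten: when you split off $z(t)=\sg{-(t-t_1)\mathcal A(t_1)}u_1(t_1)$ on $J_2$, the residual right-hand side is $f-(\mathcal A(\cdot)-\mathcal A(t_1))z$, which lies in $L^r(J_2;X)$ precisely because $z\in L^r(J_2;D)$ and the perturbation is uniformly bounded in $\LL(D;X)$; and the concatenated function belongs to $W^{1,r}(J;X)$ because the traces match at the knots by~\eqref{e-embedcont}, so the distributional derivative has no singular part there. With those details filled in, your argument is a correct proof of the cited theorem.
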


\begin{theorem} \label{t-unsere} Let $\mu$ be a real, bounded, measurable function on $\Omega$ which
  admits a positive lower bound. Then, for every $p \in \iv{1,\infty}$, the operators $A_p(\mu)$ admit
  maximal parabolic regularity on $L^p$. 
\end{theorem}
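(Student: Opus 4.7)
The plan is to invoke the classical result that a sectorial operator with bounded imaginary powers of angle strictly less than $\pi/2$ on a UMD space admits maximal $L^r$-regularity (Dore--Venni / Pr\"uss--Sohr). Fortunately, all the ingredients have already been collected in Proposition~\ref{prop:collection-Ap}: for $p \in \iv{1,\infty}$ the space $L^p$ has the UMD property; item~iii) shows that $-A_p(\mu)$ generates an analytic semigroup, so that $A_p(\mu)+1$ is sectorial with spectral angle strictly less than $\pi/2$ and $1$ is a regular point; and item~iv) delivers the bounded imaginary powers. The statement in~iv) is formulated on compact strips $\iv{-\varepsilon,\varepsilon}$, but via Cowling's theorem cited there one obtains the standard global estimate $\|(A_p(\mu)+1)^{\mathrm i s}\|_{\LL(L^p)} \leq C e^{\theta |s|}$ with some $\theta < \pi/2$, which is exactly what Dore--Venni requires as input.

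Applying the Dore--Venni result then immediately yields maximal $L^r(J;\dom A_p(\mu), L^p)$-regularity for $A_p(\mu)+1$ for every $r \in \iv{1,\infty}$, cf.~Remark~\ref{rem:domAp}. To transfer this to $A_p(\mu)$ itself, recall that on a bounded interval $J$ maximal regularity is preserved under bounded additive perturbations of the generator: rewriting $u' + A_p(\mu)u = f$ as $u' + (A_p(\mu)+1)u = f+u$ and using that the solution map $g \mapsto u$ for the problem with generator $A_p(\mu)+1$ is bounded from $L^r(J;L^p)$ into $\mr^r_0(J;\dom A_p(\mu), L^p) \hookrightarrow L^r(J;L^p)$, a standard short-interval Neumann-series argument together with concatenation produces the unique solution in $\mr^r_0(J;\dom A_p(\mu),L^p)$. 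Hence $A_p(\mu)$ admits maximal parabolic regularity on $L^p$.

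The main ``obstacle'' here is really just correctly identifying and invoking Dore--Venni in the present UMD setting; beyond the preparatory work already carried out in Proposition~\ref{prop:collection-Ap}, there is no further genuine technical content. As alternative routes, one could treat $p = 2$ separately via de Simon's theorem (since $A_2(\mu)$ is selfadjoint on the Hilbert space $L^2$) and extend to other $p$ through Hieber--Pr\"uss-type arguments using the Gaussian kernel estimates available for $\sg{-tA_p(\mu)}$; but the Dore--Venni route uniformly covers the whole range $p \in \iv{1,\infty}$ with minimal effort.
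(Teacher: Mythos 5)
Your argument is correct, but it follows a genuinely different route from the paper's. The paper does not give an argument at all in the strict sense: it cites two external proofs, namely~\cite[Thm.~5.4]{HiebRehb} (via Gaussian bounds for the heat kernel in the spirit of Hieber--Pr\"uss) and~\cite[Ch.~7]{GKR} (via contractivity of the semigroup on all $L^p$ spaces combined with Lamberton's theorem~\cite{lambert}). You instead assemble the result from Dore--Venni on the UMD space $L^p$. The one genuinely delicate point in your route is the passage from Proposition~\ref{prop:collection-Ap}~iv), which only records boundedness of $(A_p(\mu)+1)^{\mathrm{i}s}$ on compact strips, to the exponential bound $\|(A_p(\mu)+1)^{\mathrm{i}s}\|_{\LL(L^p)} \le C e^{\theta|s|}$ with power angle $\theta < \pi/2$ that Dore--Venni actually requires; the compact-strip statement alone would not suffice. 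You correctly flag this and go back to Cowling's theorem, whose hypotheses (a symmetric contraction semigroup: selfadjointness on $L^2$ from item~i), contractivity and positivity on all $L^p$ from item~iii)) are indeed available, and which yields the angle $\pi\lvert\frac1p-\frac12\rvert < \frac{\pi}{2}$. The subsequent removal of the shift by a short-interval Neumann series and concatenation is standard (an exponential rescaling $u = e^t w$ would do the same job more quickly). Note finally that the ``alternative route'' you sketch at the end --- de Simon for $p=2$ plus Hieber--Pr\"uss-type Gaussian-estimate arguments --- is in essence the first of the two proofs the paper actually cites, and the Lamberton route cited second rests, like yours, on the $L^p$-contractivity recorded in Proposition~\ref{prop:collection-Ap}. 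What your approach buys is a self-contained derivation from facts already collected in the paper; what the paper's citations buy is validity in settings (e.g.\ more general Lebesgue spaces, cf.~\cite{elstmeyrrehb}) where the UMD/BIP machinery is less convenient.
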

\begin{proof}
  The theorem can be proved in different ways: in~\cite[Thm.~5.4]{HiebRehb} it is shown via Gaussian
  estimates for the heat kernel, heavily resting on~\cite{HiebPruess}, see
  also~\cite{coulhon}. Alternatively, the theorem is proved in~\cite[Ch.~7]{GKR}, there resting on the
  contractivity of the induced semigroups on all $L^p$ spaces (cf.\ Proposition~\ref{prop:collection-Ap})
  and the pioneering result of Lamberton~\cite{lambert}. The latter allows to prove maximal parabolic
  regularity on even more general Lebesgue spaces, see~\cite{elstmeyrrehb}.
\end{proof}

\begin{theorem}[{\cite[Thm.~2.1]{AmannDiffEq}}] \label{t-Amann} Let 
  $r \in \iv{1,\infty}$ and suppose that $X, Y$ are Banach spaces with dense embedding $Y \embeds
  X$. Also assume the following:
  \begin{enumerate}[i)]
  \item \label{t-Amann-evil}$\mathcal A$ is a map from $\mr^r(J;Y,X)$ into $L^\infty(J;\LL(Y;X))$,
    the latter space being identified with a subset of the \emph{non-autonomous} parabolic operators on
    $X$. Moreover, $\mathcal A$ is Lipschitz continuous on bounded subsets.
  \item For each $u \in \mr^r(J;Y,X)$ and every $S \in \mathopen]0,T]$ the non-autonomous operator
    $\mathcal A(u)|_{]0,S[}$ provides a topological isomorphism between $\mr_0^r(0,S;Y,X)$ and
    $L^r(0,S;X)$.
  \item The mapping $F\colon\mr^r(J;Y,X) \to L^s(J;X)$ is Lipschitzian on every bounded subset for some
    $s > r$.
  \item Both $\mr^r(J;Y,X) \ni u \mapsto \mathcal A(u) \in L^\infty(J;\LL(Y;X))$ and
    $F \colon \mr^r(J;Y,X) \to L^s(J;X)$ are \emph{Volterra maps}, i.e.
    \[u|_{\iv{0,S}}=v|_{\iv{0,S}} \quad \implies \quad \bigl(A(u),F(u)\bigr)|_{\iv{0,S}}=
      \bigl(A(v),F(v)\bigr)|_{\iv{0,S}}\] for every $S \in \iv{0,T}$.
  \item $u_0 \in (X,Y)_{1-\frac {1}{r},r}$.

\end{enumerate}
Then there is a (maximal) interval $I_\bullet:= \iv{0,S_\bullet} \subseteq J$ such that the equation
\[
  u' +\mathcal A(u)u =F(u) , \quad u(0)=u_0
\]
has a solution $u$ on every subinterval $I = \iv{0,S} \subseteq I_\bullet$ which belongs to the maximum
regularity space $\mr^r(I;Y,X)$. Moreover, this solution is unique.
\end{theorem}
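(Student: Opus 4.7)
The plan is a Banach fixed-point argument in the maximal regularity space $\mr^r(I;Y,X)$, where $I := \iv{0,S} \subseteq J$ is a sufficiently short initial subinterval. As preparation, I will lift the initial value $u_0 \in (X,Y)_{1-1/r,r}$ to a reference trajectory $u^* \in \mr^r(J;Y,X)$ with $u^*(0) = u_0$, using Lemma~\ref{l-maxparregfacts}(ii) applied to any analytic-semigroup generator on $X$ whose domain is $Y$; such an operator is available from the family $\mathcal{A}$ itself by assumption (i). The decisive quantitative consequence, used repeatedly below, is that absolute continuity of the Bochner integral forces
\[
  \|u^*\|_{L^r(I;Y)} + \|(u^*)'\|_{L^r(I;X)} \longrightarrow 0 \qquad \text{as } S \searrow 0.
\]

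For parameters $\rho > 0$ and $S > 0$ to be tuned later, I consider the closed nonempty convex set
\[
  B_{\rho,S} := \bigl\{v \in \mr^r(I;Y,X) \colon v(0) = u_0,\ \|v - u^*\|_{\mr^r(I;Y,X)} \leq \rho \bigr\}
\]
and define a solution map $\Phi\colon B_{\rho,S} \to \mr^r(I;Y,X)$ by setting $\Phi(v) := u$, where $u$ is the unique solution of the linear non-autonomous Cauchy problem $u' + \mathcal{A}(v)u = F(v)$ on $I$ with $u(0) = u_0$. Well-posedness of $\Phi$ follows from assumption (ii), by reducing to the zero-initial-value problem for $w := u - u^*$ and invoking~\eqref{eq:MPRestimate}; the Volterra property (iv) ensures that $\mathcal{A}(v)$ and $F(v)$ on $I$ depend only on $v|_I$, so extending $v$ arbitrarily to $J$ for the purpose of applying (i)--(iv) is harmless.

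The core of the argument is the contraction estimate. For $v_1, v_2 \in B_{\rho,S}$ and $u_i := \Phi(v_i)$, the difference $u_1 - u_2 \in \mr^r_0(I;Y,X)$ satisfies
\[
  (u_1 - u_2)' + \mathcal{A}(v_1)(u_1 - u_2) = \bigl(F(v_1) - F(v_2)\bigr) + \bigl(\mathcal{A}(v_2) - \mathcal{A}(v_1)\bigr)u_2 .
\]
I will apply~\eqref{eq:MPRestimate} with a constant uniform over $B_{\rho,S}$, which I secure by viewing $\mathcal{A}(v) - \mathcal{A}(u^*)$ as a small $L^\infty(I;\LL(Y;X))$-perturbation of $\mathcal{A}(u^*)$ (via the Lipschitz property (i) and small $\rho$) and running a Neumann-series argument. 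The $F$-term is then bounded by $S^{1/r - 1/s} L_F \|v_1 - v_2\|_{\mr^r(I)}$ via H\"older's inequality, which is the decisive use of the gap $s > r$ in (iii).

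The main obstacle is the $\mathcal{A}$-term: a naive bound via $\|\mathcal{A}(v_1) - \mathcal{A}(v_2)\|_{L^\infty} \|u_2\|_{L^r(I;Y)}$ yields no smallness, since $\|u_2\|_{L^r(I;Y)}$ need not shrink with $S$ when $u_0 \neq 0$. I overcome this by splitting $u_2 = u^* + (u_2 - u^*)$: the piece involving $u_2 - u^* \in \mr^r_0$ contributes a factor $\leq \rho$, while the piece involving $u^*$ contributes $\|u^*\|_{L^r(I;Y)}$, which vanishes as $S \searrow 0$. Choosing first $\rho$ small (so the $\rho$-factor defeats the Lipschitz constant from (i)) and then $S$ small (so the remaining $S$-dependent factors are small) yields a contraction with factor $< \tfrac{1}{2}$; an analogous computation for $\Phi(v) - u^*$ gives the self-map property. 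Banach's theorem then produces the unique solution on $I$, and the maximal interval $I_\bullet$ is obtained by taking the union of all subintervals of existence, with uniqueness on overlaps guaranteed by the same contraction argument and consistency across restrictions by (iv).
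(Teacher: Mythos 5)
The paper offers no proof of Theorem~\ref{t-Amann}: it is quoted verbatim from Amann's paper \cite{AmannDiffEq} (see also the comments in \cite{AmannRuss}), so there is no in-paper argument to compare yours against. What you propose is essentially the standard contraction proof that Amann himself gives --- lift $u_0$ to a reference trajectory $u^*$, freeze coefficients to define $\Phi$, and defeat the Lipschitz constant of $\mathcal A$ by splitting $u_2 = u^* + (u_2-u^*)$ while the $F$-term is absorbed by the H\"older gain $S^{1/r-1/s}$ coming from $s>r$ --- and the core estimates are correct. Three points need tightening. First, assumption~(i) does \emph{not} supply an autonomous analytic generator on $X$ with domain $Y$ (it only places $\mathcal A(u)$ in $L^\infty(J;\LL(Y;X))$, and (ii) is a non-autonomous statement), so Lemma~\ref{l-maxparregfacts}(ii) is not directly applicable; the lift $u^*$ should instead be taken from the trace-method characterization of $(X,Y)_{1-\frac1r,r}$, which asserts exactly the surjectivity of $u\mapsto u(0)$ from $\mr^r(J;Y,X)$ onto that space, and the smallness $\|u^*\|_{L^r(I;Y)}+\|(u^*)'\|_{L^r(I;X)}\to 0$ then holds as you say. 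Second, ``extending $v$ arbitrarily to $J$'' is too cavalier: assumptions~(i) and~(iii) give Lipschitz constants only with respect to $J$-norms, so to estimate against $\|v_1-v_2\|_{\mr^r(I)}$ you need a \emph{bounded linear} extension operator $\mr^r(I;Y,X)\to\mr^r(J;Y,X)$ keeping $B_{\rho,S}$ inside a fixed bounded set; the Volterra property then makes the choice of extension irrelevant on $I$. Third, the passage to the maximal interval is precisely where the nonlocality bites: restarting at $t_0>0$ is not a fresh Cauchy problem, since $\mathcal A(u)(t)$ for $t>t_0$ still sees $u|_{[0,t_0]}$, so the continuation must freeze the constructed history and contract over concatenations $u|_{[0,t_0]}\oplus v$; and uniqueness among \emph{all} solutions (not just those in $B_{\rho,S}$) requires the observation that any solution with $u(0)=u_0$ enters $B_{\rho,\delta}$ for small $\delta$ by absolute continuity of the Bochner integral. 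All of this is fixable and is exactly the delicate part of Amann's own argument, so your outline is sound but these steps should be written out.
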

\begin{rem} 
  It is known since long that the Volterra property allows to derive results which are not available in a
  more general context without this property, see e.g.~\cite[Ch.~V]{ggz}.  Nevertheless, we feel that
  Amann's result is very close to the ``optimum'' what can be achieved. The reader is advised to consult~\cite[Thm.~3.1]{AmannRuss} for comments on the result by its inventor and a (fixable) shortcoming in
  the proof in~\cite{AmannDiffEq}.
\end{rem}

\subsection{Transferring to real spaces}
\label{sec:real}
Up to now, we have worked in a \emph{complex} setting, but the Keller-Segel system has to be read as a
\emph{real} one. Therefore we transfer the results which we need in the sequel to the corresponding real
spaces. In order to do this, we denote the real parts of $L^p$ and $W^{1,q}$ by $L^p_\R$ and
$W_\R^{1,q}$.

\begin{rem} 
  The necessity to start with complex spaces and to re-evaluate the assertions to also hold in the real
  case can be explained as follows: Most results up to this chapter~\ref{sec:real} are complex in their
  very nature, a particular example being Proposition~\ref{p-wurz}. This makes it evident that, at this
  point, complex spaces are the correct setting. On the other hand, the condition of being twice
  continuously differentiable for the nonlinear functions is more or less inevitable in our context as
  will become clear below, cf.\ Lemma~\ref{l-Lii;ps}, Corollary~\ref{c-lipsdchd} and
  Lemma~\ref{l-Lipschitzconty}. But imposing this condition in a \emph{complex} setting in fact
  necessitates the \emph{analyticity} of the corresponding functions, which is drastically and more
  importantly unnecessarily more restrictive. Hence we ``do the twist'' and switch to real spaces for the
  actual investigation of the model.
\end{rem}

The starting point is the insight that the semigroup operators $\sg{-tA_p(\mu)}$ map real functions into
real functions if the coefficient function $\mu$ is real-valued, as noted in
Proposition~\ref{prop:collection-Ap}. Hence, the operators $(A_p(\mu)+\lambda)^{-1} \colon L^p \to L^p$
\emph{also} map real functions into real ones if $\lambda \in \iv{0,\infty}$.  This makes clear that the
operator $A_p(\mu) $ has a meaningful restriction to $L^p_\R$, whose domain also consists of real
functions only. We denote this domain by $\dom_\R(A_p(\mu))$ for the rest of this subsection.

\begin{lemma} \label{l-carryover} Let $\phi$ be a real, uniformly continuous function which is bounded
  from below by a positive constant.  The assertion of Theorem~\ref{t-multiplier} remains true in case of
  real spaces, i.e., one has for $ p \ge \frac {q}{2}$ the embedding
  \begin{equation} \label{e-realembed}
    \bigl(L_\R^p,\dom_\R(A_p(\phi)\bigr)_{\theta,1}\embeds
    W_\R^{1,q} \embeds C(\overline \Omega)
  \end{equation}
  for all $\theta \in \bigl[\frac12+\frac {d}{2q},1\bigr[$.
\end{lemma}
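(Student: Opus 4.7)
The plan is to transport the complex-space embedding from Theorem~\ref{t-multiplier} to the real setting by exploiting that the whole machinery respects complex conjugation whenever the coefficient $\phi$ is real-valued, and then to close with the Sobolev embedding of $W^{1,q}_\R$ into $C(\overline\Omega)$, which is free since $q>d$ and $\Omega$ has the extension property.

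First I would record the conjugation-invariance at each level. Because $\phi$ is real, the form~\eqref{e-0815} has the property that $\overline{-\nabla\cdot\phi\nabla v} = -\nabla\cdot\phi\nabla\overline{v}$, hence $A_p(\phi)$ commutes with complex conjugation on its domain; equivalently, $(A_p(\phi)+1)^{-1}$ sends real functions to real functions (this is essentially the observation used to define $\dom_\R(A_p(\phi))$). Consequently, the complex Banach spaces $L^p$ and $\dom(A_p(\phi))$ split as the topological direct sums $L^p = L^p_\R \oplus iL^p_\R$ and $\dom(A_p(\phi)) = \dom_\R(A_p(\phi)) \oplus i\dom_\R(A_p(\phi))$, with the splitting respecting the graph norm.

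Second, I would pass this splitting through the real interpolation functor. For any compatible couple $(X_0,X_1)$ of complex Banach spaces which are the complexifications of real Banach spaces $(X_{0,\R}, X_{1,\R})$ in this direct-sum sense, one has the natural identification
\begin{equation*}
(X_0,X_1)_{\theta,1} = (X_{0,\R},X_{1,\R})_{\theta,1} \oplus i(X_{0,\R},X_{1,\R})_{\theta,1},
\end{equation*}
with equivalent norms; this follows directly from the $K$-method definition, since the $K$-functional respects real/imaginary decomposition when the reference spaces do. Applying this with $X_0 = L^p$, $X_1 = \dom(A_p(\phi))$ identifies the real subspace of $(L^p,\dom(A_p(\phi)))_{\theta,1}$ with $(L^p_\R,\dom_\R(A_p(\phi)))_{\theta,1}$.

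Third, I would invoke Theorem~\ref{t-multiplier}: for $\theta\in[\tfrac12+\tfrac{d}{2q},1[$ and $p\ge q/2$, the complex interpolation space embeds continuously into $W^{1,q}$. Restricting this embedding to the real subspace gives $(L^p_\R,\dom_\R(A_p(\phi)))_{\theta,1}\embeds W^{1,q}_\R$. Finally, since $q>d$ and $\Omega$ is a Lipschitz domain (hence has the extension property), the Sobolev embedding $W^{1,q}_\R\embeds C(\overline\Omega)$ holds, giving~\eqref{e-realembed}.

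The main obstacle—more a bookkeeping point than a real difficulty—is justifying that real interpolation commutes with the real/imaginary decomposition in the form used above; I would handle this once and for all by writing out the $K$-functional on $f = f_1+if_2$ in terms of the $K$-functionals of $f_1$ and $f_2$ and observing that both the decomposition and the embedding constants transfer quantitatively, so the continuity of the embedding passes to the real subspace without loss.
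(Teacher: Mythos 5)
Your argument is correct, but it follows a genuinely different route from the paper's. The paper exploits the special characterization of embeddings of $(X,Y)_{\theta,1}$-spaces: by Theorem~\ref{t-multiplier} one has the multiplicative inequality $\|\psi\|_{W^{1,q}} \le c\,\|\psi\|_{L^p}^{1-\theta}\|\psi\|_{\dom(A_p(\phi))}^{\theta}$ for all $\psi\in\dom(A_p(\phi))$; this inequality is \emph{equivalent} to the embedding $(X,Y)_{\theta,1}\embeds Z$ (Bergh--L\"ofstr\"om, Ch.~3.5, or Bennett--Sharpley), it trivially restricts to real $\psi$, and in its restricted form it is again constitutive for the real embedding~\eqref{e-realembed}. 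That is a three-line argument with no interpolation-theoretic bookkeeping. You instead decompose $L^p$ and $\dom(A_p(\phi))$ into real and imaginary parts (legitimate, since $\phi$ real implies $A_p(\phi)$ commutes with conjugation, so the real-part projection is bounded by $1$ in both the $L^p$- and the graph norm), show that the $K$-functional respects this decomposition, and restrict the complex embedding to the real subspace. This is correct and has the advantage of being functorial: it would transfer embeddings obtained by \emph{any} interpolation method, not just those characterized by a multiplicative inequality, and it makes the conjugation-invariance explicit rather than implicit. The cost is precisely the $K$-functional computation you flag at the end, which the paper's route avoids entirely. One caution on wording: in your third step you speak of ``the complex interpolation space,'' but the space in Theorem~\ref{t-multiplier}, conclusion~\eqref{e-embeddd}, is the \emph{real} interpolation space $(L^p,\dom(A_p(\phi)))_{\theta,1}$ of complex Banach spaces; your second step is phrased for the real method, so this is only a terminological slip, but it should be fixed, since the complex functor $[\cdot,\cdot]_\theta$ would require a separate (retraction-type) argument to commute with the real/imaginary splitting. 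Finally, note that the second embedding $W^{1,q}_{\R}\embeds C(\overline\Omega)$, which you justify via $q>d$ and the extension property, is indeed how the paper obtains it as well.
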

\begin{proof}
  Let us first recall (see Remark~\ref{rem:domAp}) that we have topologized $\dom_\R(A_p(\phi))$ by the
  norm $\|(A_p(\phi) +1)\cdot \|_{L^p_\R}$. Further, by Theorem~\ref{t-multiplier}, there is a positive
  constant $c$ such that the following inequality holds true for all $\psi \in \dom(A_p(\phi))$ and
  $\theta \in [\frac12 + \frac{d}{2q},1[$:
  \begin{equation} \label{e-interineq} \|\psi\|_{W^{1,q}} \le c \,\|\psi \|_{L^p}^{1-\theta}\, \|\psi
    \|^{\theta} _{\dom(A_p(\phi))} = c \,\|\psi \|_{L^p}^{1-\theta}\, \bigl\|(A_p(\phi) +1)\psi
    \bigr\|^{\theta} _{L^p}.
  \end{equation}
  In particular, inequality~\eqref{e-interineq} is true for every \emph{real} function
  $\psi \in \dom_\R(A_p(\phi))$, and then reads
  \begin{equation} \label{e-interineqreal} \|\psi\|_{W_\R^{1,q}} \le c \,\|\psi \|_{L_\R^p}^{1-\theta}\,
    \bigl\|(A_p(\phi) +1)\psi \bigr\|^{\theta}_{L_\R^p}\,= c \, \|\psi \|_{L_\R^p}^{1-\theta} \|\psi
    \|^{\theta} _{\dom_\R(A_p(\phi))}.
  \end{equation}
  But~\eqref{e-interineqreal} is constitutive for the embedding~\eqref{e-realembed},
  cf.~\cite[Ch.~3.5]{bergh} or~\cite[Ch.~5, Prop.~2.10]{bennet}.
\end{proof}

\begin{theorem} \label{t-unsreal} Let $\mu$ be a real, bounded, measurable function on $\Omega$ which
  admits a positive lower bound. Then, for every $p \in \iv{1,\infty}$, $A_p(\mu)$ admits maximal
  parabolic $L_\R^p$ regularity. 
\end{theorem}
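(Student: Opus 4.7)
The plan is to deduce Theorem~\ref{t-unsreal} from its complex counterpart, Theorem~\ref{t-unsere}, by a conjugation/uniqueness argument. The essential input, already recorded just before Lemma~\ref{l-carryover}, is that the semigroup $\{\sg{-tA_p(\mu)}\}_{t\geq 0}$ preserves real-valuedness because $\mu$ is real (Proposition~\ref{prop:collection-Ap}~iii)). Consequently, so do the resolvents $(A_p(\mu)+\lambda)^{-1}$ for $\lambda>0$, and $A_p(\mu)$ admits a well-defined restriction to $L^p_\R$ with the densely embedded domain $\dom_\R(A_p(\mu))$ defined via this real-preserving property. Density of $\dom_\R(A_p(\mu))$ in $L^p_\R$ follows from density in the complex setting by taking real parts of approximating sequences, noting that real parts are preserved under the real-valued resolvents.

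Next, I would fix $r \in \iv{1,\infty}$ and $f \in L^r(J;L^p_\R)$, regard it as an element of $L^r(J;L^p)$, and invoke Theorem~\ref{t-unsere} to obtain the unique $u \in \mr^r_0(J;\dom(A_p(\mu)),L^p)$ with
\begin{equation*}
u'(t) + A_p(\mu)u(t) = f(t) \quad \text{for a.e. } t \in J.
\end{equation*}
To show $u$ is real-valued, I would apply complex conjugation to this identity. Since $A_p(\mu)$ commutes with complex conjugation (the coefficient $\mu$ being real-valued, the defining relation~\eqref{e-constitut} is preserved by conjugation), and since $f$ is real and $u(0)=0$, the function $\overline u$ also lies in $\mr^r_0(J;\dom(A_p(\mu)),L^p)$ and satisfies the very same initial value problem. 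Uniqueness in the complex setting then forces $u = \overline u$, so $u$ takes values in $L^p_\R$ and in fact in $\dom_\R(A_p(\mu))$ almost everywhere, hence $u \in \mr^r_0(J;\dom_\R(A_p(\mu)),L^p_\R)$.

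This gives existence of a real solution; uniqueness in the real setting is immediate, since any real solution is in particular a complex solution and is therefore uniquely determined by Theorem~\ref{t-unsere}. The constant in the maximal regularity estimate~\eqref{eq:MPRestimate} carries over without change because the $L^p_\R$ and $\dom_\R(A_p(\mu))$ norms are precisely the restrictions of their complex counterparts to the real subspaces. I do not anticipate a serious obstacle here; the only subtlety worth double-checking is the assertion that $A_p(\mu)$ commutes with complex conjugation, which one verifies from~\eqref{e-constitut} by replacing the test function $\varphi$ with $\overline\varphi$ and using that $\mu$ is real, so that $\overline{A_p(\mu)\psi} = A_p(\mu)\overline\psi$ holds on $\dom(A_p(\mu))$.
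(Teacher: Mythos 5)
Your proof is correct and follows the paper's overall strategy of reducing to the complex-space result, Theorem~\ref{t-unsere}; the only substantive difference is the mechanism by which real-valuedness of the solution is extracted. The paper represents the unique complex solution by the variation-of-constants formula $u(t)=\int_0^t \exp\bigl(-(t-s)A_p(\mu)\bigr)f(s)\,\mathrm{d}s$ and appeals to Proposition~\ref{prop:collection-Ap}, according to which the semigroup maps real functions to real functions, so that $u(t)$ is visibly real. You instead observe that $A_p(\mu)$ commutes with complex conjugation---which is indeed immediate from the constitutive relation~\eqref{e-constitut}, since $\mu$ is real-valued and $W^{1,2}\cap L^{p'}$ is conjugation-invariant with $\|\overline\varphi\|_{L^{p'}}=\|\varphi\|_{L^{p'}}$---so that $\overline u$ solves the same initial value problem with the same zero initial datum, and uniqueness in the complex setting forces $u=\overline u$. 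Your route spares you the (standard but not entirely free) justification that the strong solution in the maximal-regularity class coincides with the mild solution given by the integral formula, at the modest price of verifying the conjugation symmetry of the operator and of the space $\mathrm{MR}^r_0$; both arguments are short, and the two mechanisms are of course two manifestations of the same underlying invariance. Your remarks on uniqueness, on the density of $\dom_\R(A_p(\mu))$ in $L^p_\R$, and on the persistence of the constant in~\eqref{eq:MPRestimate} are all accurate.
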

\begin{proof}
  Let $f \in L_\R^p$. Then, by maximal parabolic $L^p$ regularity of $A_p(\mu)$, there exists a unique
  function $u \in \mr_0^r(J;\dom(A_p(\mu)),L^p)$ such that
  \[
    u'(t) +A_p(\mu) u(t) = f(t) \quad \text{in } L^p \quad \text{for almost all } t \in J
    .
  \]
  But then this solution is given by the variation-of-constants formula
  \[
    u(t)=\int_0^t \sgb{-(t-s) A_p(\mu)} f(s) \dd s,
  \]
  and since the semigroup operators transform real functions into real ones, cf.\
  Proposition~\ref{prop:collection-Ap}, it is clear that the solution in fact belongs to the space
  $W_0^{1,r}(J;L_\R^p) \cap L^r(J;\dom_\R(A_p(\mu)))$, what proves the claim.
\end{proof}

Switching to real spaces, the symbol $\dom(A_p(\mu))$ from now on denotes the domain of $A_p(\mu)$
considered on the \emph{real} space $L^p_\R$.

\subsection{Constant domains for $A_p(\varphi)$}

We will need that the domains of the differential operators $A_p(\varphi)$ are uniform w.r.t.\ $\varphi$
from a certain regularity class, as per the assumptions in Theorem~\ref{t-Amann}.  In general, this is
not to be expected if $\varphi$ does not have a positive lower bound, cf.\ also
Remark~\ref{r-rechtfertig}. Still, we need that the differential operator on the right-hand side
in~\eqref{e-u}, which is the one having potentially nonpositive coefficient function values, is
compatible with the domain of definition for the function $v(t)$.

It will turn out that both the latter \emph{and} the constant domain of definition for the differential
operators on the left-hand side in~\eqref{e-u} is exactly $\domLap$. We prove the following lemma which
covers all these considerations in its generality, there writing $\Delta $ instead of $-A_p(1)$ and
already supposing that all occurring spaces are in fact \emph{real} ones.

\begin{lemma} \label{l-domAIN} Let $p=\frac{q}{2}$ and assume $\rho \in W^{1,q}$. Then the following
  assertions hold true:
  \begin{enumerate}[i)]
  \item The domain of the Laplacian is embedded into the domain of $A_p(\rho)$, that is,
    \begin{equation*}
      \domLap \embeds \dom(A_p(\rho)).
    \end{equation*}
  \item If $\rho$ has, additionally, a positive lower bound, then the reverse embedding
    \begin{equation*}
      \dom(A_p(\rho)) \embeds \domLap
    \end{equation*}
    is also true, and $\domLap$ and $\dom(A_p(\rho))$ coincide as Banach spaces.
  \end{enumerate}
\end{lemma}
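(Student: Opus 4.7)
The plan is to relate the two operators via a product-rule manipulation of the defining bilinear form, using that both $\rho$ and $\nabla\psi$ sit in $L^q$ (for the relevant $\psi$) to match up $L^{p}$ duality pairings.

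For assertion~i), take $\psi \in \domLap$ and any test function $\varphi \in W^{1,2} \cap L^{p'}$. I first check that $\rho \overline\varphi$ is again an admissible test function for $\Delta = -A_p(1)$: since $\rho \in W^{1,q} \embeds L^\infty$ and $\nabla\rho \in L^q$, one has $\rho\overline\varphi \in L^{p'}$ and, using the Sobolev embedding for $\varphi$ (which in dimension $d \le 3$ gives enough integrability to multiply with an $L^q$ gradient into $L^2$ since $q>d$), also $\rho\overline\varphi \in W^{1,2}$. Then I split
\begin{equation*}
\int_\O \rho\,\nabla\psi\cdot\nabla\overline\varphi\dd\x = \int_\O \nabla\psi\cdot\nabla(\rho\overline\varphi)\dd\x - \int_\O (\nabla\psi\cdot\nabla\rho)\,\overline\varphi\dd\x
\end{equation*}
and apply the constitutive relation \eqref{e-constitut} for $A_p(1)$ to the first integral, which gives $\int_\O \rho\,(A_p(1)\psi)\,\overline\varphi\dd\x$. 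To handle the second integral, I use the crucial embedding $\domLap \embeds W^{1,q}$ provided by Theorem~\ref{t-multiplier} with $\phi \equiv 1$ (using $p \ge q/2$ and $\tfrac12+\tfrac{d}{2q}<1$); hence $\nabla\psi\cdot\nabla\rho \in L^{q/2}=L^p$. Both terms are then bounded by $c(\psi)\|\varphi\|_{L^{p'}}$ via Hölder, which proves $\psi \in \dom(A_p(\rho))$ together with the continuity of the embedding, with the explicit formula $A_p(\rho)\psi = \rho\,A_p(1)\psi - \nabla\psi\cdot\nabla\rho$.

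For assertion~ii), the key observation is that the positive lower bound on $\rho$ and $\rho \in W^{1,q}$ force $1/\rho \in W^{1,q}$ as well. I proceed symmetrically: given $\psi \in \dom(A_p(\rho))$ and $\varphi \in W^{1,2} \cap L^{p'}$, the substitution $\chi := \varphi/\rho$ yields an admissible test function for $A_p(\rho)$ (by the same multiplier arguments as above). Writing $\nabla\overline\varphi = \rho\,\nabla\overline\chi + (\overline\varphi/\rho)\,\nabla\rho$ and plugging in gives
\begin{equation*}
\int_\O \nabla\psi\cdot\nabla\overline\varphi\dd\x = \int_\O \rho\,\nabla\psi\cdot\nabla\overline\chi\dd\x + \int_\O (\nabla\psi\cdot\nabla\rho)\,\frac{\overline\varphi}{\rho}\dd\x.
\end{equation*}
The first integral equals $\int_\O A_p(\rho)\psi\cdot(\overline\varphi/\rho)\dd\x$ by the constitutive relation for $A_p(\rho)$. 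Since $\rho$ is uniformly continuous with positive lower bound, Theorem~\ref{t-multiplier} now applies with $\phi = \rho$, so $\dom(A_p(\rho)) \embeds W^{1,q}$ and hence again $\nabla\psi\cdot\nabla\rho \in L^p$. The right-hand side therefore has the form $\int_\O g\,\overline\varphi\dd\x$ with $g \in L^p$, proving $\psi \in \domLap$ together with continuity.

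The main obstacle is a bookkeeping one rather than a conceptual one: I must verify that in both directions the modified test functions ($\rho\overline\varphi$ and $\overline\varphi/\rho$) really lie in $W^{1,2}\cap L^{p'}$, which is exactly where the assumption $q > d$ (embedding $W^{1,q}\embeds L^\infty$, combined with the Sobolev embedding of $W^{1,2}$ being good enough to absorb an $L^q$ factor into $L^2$) comes in decisively. Once these verifications are in place, the embedding $\dom(A_p(\cdot))\embeds W^{1,q}$ from Theorem~\ref{t-multiplier} is what makes the nonlinear correction term $\nabla\psi\cdot\nabla\rho$ land precisely in $L^p = L^{q/2}$, which is the reason for the choice $p = q/2$.
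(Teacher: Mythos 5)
Your proof is correct and follows essentially the same route as the paper's: the same product-rule splitting $\rho\nabla\psi\cdot\nabla\overline\varphi = \nabla\psi\cdot\nabla(\rho\overline\varphi) - \overline\varphi\,\nabla\psi\cdot\nabla\rho$ for~i), the symmetric manipulation with $\rho^{-1}$ for~ii), and the embedding of the relevant operator domains into $W^{1,q}$ (Theorem~\ref{t-multiplier}) to place the commutator term $\nabla\psi\cdot\nabla\rho$ in $L^{q/2}=L^p$. Your explicit check that the modified test functions $\rho\overline\varphi$ and $\overline\varphi/\rho$ lie in $W^{1,2}\cap L^{p'}$ is a detail the paper leaves implicit, but it does not alter the argument.
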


\begin{proof}
  i)~Let $\psi \in \domLap$ and consider the linear form
  \begin{equation} \label{e-linformlp} \bigl(W^{1,2} \cap L^{p'}\bigr) \ni \varphi \mapsto \langle -\nabla \cdot \rho \nabla
    \psi, \varphi \rangle.
  \end{equation}
  We show that $\psi \in \dom(A_p(\rho))$ by showing that~\eqref{e-linformlp} is continuous w.r.t.\ the
  $L^{p'}$-topology. 
  Therefore we estimate
  \begin{align} \label{e-eessti} \Big |\int_\Omega \rho \nabla \psi \cdot \nabla \varphi\dd\x \Big | & =
    \Big |\int_\Omega \nabla \psi \cdot \nabla (\rho \varphi) \dd\x - \int_\Omega \varphi \nabla \psi
    \cdot \nabla \rho \dd\x \Big | \\ \notag & \leq \Big |\int_\Omega \nabla \psi \cdot \nabla (\rho
    \varphi) \dd\x \Big | + \Big |\int_\Omega \varphi \nabla \psi \cdot \nabla \rho \dd\x \Big | \\
    \notag & \le \|\rho\|_{L^\infty} \|\Delta \psi\|_{L^p}\|\varphi \|_{L^{p'}} + \|\nabla \psi \|_{L^q}
    \|\nabla \rho\|_{L^q} \|\varphi \|_{L^{p'}}.
  \end{align}
  Since $\domLap$ was topologized by $\|(-\Delta +1)\cdot \|_{L^p}$, we thus find
  \begin{multline}
    \label{eq:domLapBound}
    \sup_{\substack{\varphi \in W^{1,2} \cap L^{p'}, \|\varphi \|_{L^{p'}} \le 1} }\Big |\int_\Omega \rho \nabla \psi \cdot
    \nabla \varphi\dd\x \Big |\\ \leq \Bigl ( \|\rho\|_{L^\infty}\bigl\|\Delta (-\Delta
    +1)^{-1}\bigr\|_{\LL(L^p)} + \mathcal E\bigl(\dom_{L^p}(\Delta), W^{1,q}\bigr) \|\nabla
    \rho\|_{L^q} \Bigr )\|\psi \|_{\dom_{L^p}(\Delta)}
  \end{multline}
  This means that the linear form~\eqref{e-linformlp} is bounded on $(W^{1,2},\|\cdot\|_{L^{p'}})$, such
  that $\psi \in \dom(A_p(\rho))$ by the construction in Chapter~\ref{s-pre}.  Moreover,
  $\|A_p(\rho)\psi\|_{L^p}$ is bounded by the right-hand side in~\eqref{eq:domLapBound}.  The embedding
  $\domLap \embeds \dom(A_p(\rho))$ follows immediately.
  
  ii)~One reasons analogously as in the previous case, but exploits instead of~\eqref{e-eessti} the
  equality
  \[
    \int_\Omega \nabla \psi \cdot \nabla \varphi\dd\x = \int_\Omega \rho^{-1} \rho \nabla \psi \cdot
    \nabla \varphi\dd\x = \int_\Omega \rho \nabla \psi \cdot \nabla (\rho^{-1} \varphi) \dd\x -\int
    _\Omega \varphi \rho \nabla \psi \nabla (\rho^{-1}) \dd\x.
  \]
  This gives $\dom(A_p(\rho)) \embeds \domLap$, from which the Banach space identity
  $\domLap = \dom(A_p(\rho))$ follows.
\end{proof}

\begin{corollary} \label{c-0stetfu} For $p = \frac q2$, the mapping
  \[
    C\bigl(\overline J;W^{1,q}\bigr) \ni \omega \mapsto -\nabla \cdot \omega(\cdot) \nabla
  \]
  takes its values in the space $C\bigl(\overline J;\LL(\dom_{L^p} (\Delta);L^p)\bigr)$ and is
  Lipschitzian on bounded subsets.
\end{corollary}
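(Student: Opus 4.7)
\begin{proofof}{Corollary~\ref{c-0stetfu} (proposal)}
The plan is to exploit that $\omega \mapsto -\nabla\cdot\omega\nabla$ is linear in $\omega$, and that Lemma~\ref{l-domAIN}~i) already provides the key quantitative bound: for any $\rho \in W^{1,q}$ one has $\dom_{L^p}(\Delta) \embeds \dom(A_p(\rho))$ with the norm estimate from~\eqref{eq:domLapBound},
\begin{equation*}
\bigl\|{-}\nabla\cdot\rho\nabla\bigr\|_{\LL(\dom_{L^p}(\Delta);L^p)} \le \|\rho\|_{L^\infty}\bigl\|\Delta(-\Delta+1)^{-1}\bigr\|_{\LL(L^p)} + \mathcal{E}\bigl(\dom_{L^p}(\Delta), W^{1,q}\bigr)\|\nabla\rho\|_{L^q}.
\end{equation*}
Because $q > d$, the Sobolev embedding $W^{1,q} \embeds L^\infty$ holds, and hence there is a constant $C > 0$, depending only on $\Omega$, $p$, $q$, such that
\begin{equation*}
\bigl\|{-}\nabla\cdot\rho\nabla\bigr\|_{\LL(\dom_{L^p}(\Delta);L^p)} \le C \|\rho\|_{W^{1,q}} \qquad \text{for every } \rho \in W^{1,q}.
\end{equation*}

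First, I would fix $\omega \in C(\overline{J}; W^{1,q})$ and pointwise evaluate: for every $t \in \overline{J}$, $\omega(t) \in W^{1,q}$, so the displayed bound shows that $-\nabla\cdot\omega(t)\nabla \in \LL(\dom_{L^p}(\Delta); L^p)$ with norm at most $C \|\omega(t)\|_{W^{1,q}}$. Continuity of the resulting operator-valued function $t \mapsto -\nabla\cdot\omega(t)\nabla$ then follows from linearity in $\omega$ combined with the same estimate: for $t, s \in \overline{J}$,
\begin{equation*}
\bigl\|{-}\nabla\cdot\omega(t)\nabla - ({-}\nabla\cdot\omega(s)\nabla)\bigr\|_{\LL(\dom_{L^p}(\Delta);L^p)} = \bigl\|{-}\nabla\cdot(\omega(t)-\omega(s))\nabla\bigr\|_{\LL(\dom_{L^p}(\Delta);L^p)} \le C\|\omega(t)-\omega(s)\|_{W^{1,q}},
\end{equation*}
which tends to zero as $s \to t$ by continuity of $\omega$ into $W^{1,q}$. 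Thus the mapping takes its values in $C(\overline{J}; \LL(\dom_{L^p}(\Delta); L^p))$.

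For the Lipschitz claim, observe that $\omega \mapsto -\nabla\cdot\omega(\cdot)\nabla$ is actually $\R$-linear in $\omega$. Applying the same estimate uniformly in $t$ to $\omega_1, \omega_2 \in C(\overline{J}; W^{1,q})$ gives
\begin{equation*}
\sup_{t \in \overline{J}}\bigl\|{-}\nabla\cdot\omega_1(t)\nabla - ({-}\nabla\cdot\omega_2(t)\nabla)\bigr\|_{\LL(\dom_{L^p}(\Delta);L^p)} \le C \|\omega_1 - \omega_2\|_{C(\overline{J};W^{1,q})},
\end{equation*}
so the mapping is in fact globally Lipschitz (with constant $C$), which is more than required. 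There is no real obstacle here: the proof is essentially a packaging of Lemma~\ref{l-domAIN}~i) together with $W^{1,q} \embeds L^\infty$ and the linearity of the assignment $\rho \mapsto -\nabla\cdot\rho\nabla$.
\end{proofof}
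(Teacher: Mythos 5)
Your proposal is correct and follows exactly the route the paper intends: the corollary is stated without proof as an immediate consequence of Lemma~\ref{l-domAIN}~i), and your argument simply makes explicit the combination of the bound~\eqref{eq:domLapBound}, the embedding $W^{1,q} \embeds L^\infty$ for $q > d$, and the linearity of $\rho \mapsto -\nabla\cdot\rho\nabla$. The observation that the map is in fact globally Lipschitz (indeed bounded linear) is accurate and consistent with what the paper uses.
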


\section[Investigation of the model]{Investigation of the model}
\label{s-inv}
\subsection{Precise formulation of the problem and main result}
\label{s-formu}
In this section, we give a rigorous analysis of~\eqref{e-u}--\eqref{e-IVs} in the sense of
Definition~\ref{d-1} below. In fact, most of this section will consist of the proof of the main
Theorem~\ref{t-mainres}, which we state in the following. An explanation of the strategy for the proof
can be found in Section~\ref{s-proof}.

Let us first agree on the following: All appearing function spaces are supposed to be \emph{real} ones,
without indicating this explicitly in the sequel.

For all what follows, we suppose Assumption~\ref{a-Koeff} to be satisfied. We moreover fix
$p=\frac{q}{2}$ with $q$ being the number from Assumption~\ref{a-reg}, which is also assumed to hold
true. We abbreviate $A_p(\mu)$ \emph{for this fixed $p$} by $A(\mu)$ for a measurable, bounded and real
coefficient function $\mu$. Fix also a number $r > 2 (1- \frac {d}{q})^{-1}$ and $s >r$.

In the following we want to establish a precise notion of a \emph{solution of the Keller-Segel-Model}.

\begin{definition} \label{d-1} Given a subinterval $I= \iv{0,S}$ of $J$, we call a quadruple of functions
  \begin{equation*}
    \bigl(u,(v,p,w)\bigl) \in \mr^r(I;\domLap,L^p) \times \mr^s(I;\domVecLap,\vecL^p)
  \end{equation*}
  a general solution of~\eqref{e-u}--\eqref{e-IVs} on $I$, if these satisfy
  \begin{align} u'(t)+ A\bigl(\afo(u(t) ,v(t))\bigr) u(t) & = A\bigl(\bfo(u(t), v(t))\bigr) v(t) \notag
    \\ \label{e-preu} & \qquad + R_1\bigl(u(t),v(t),p(t),w(t)\bigr) \\ \label{e-prev} v'(t)- k_v \Delta
    v(t) & = R_2\bigl(u(t),v(t),p(t),w(t)\bigr) \\ 
    p'(t)- k_p \Delta p(t) & = R_3\bigl(u(t),v(t),p(t),w(t)\bigr) \\ \label{e-prew} w'(t)- k_w \Delta
    w(t) & = R_4\bigl(u(t),v(t),p(t),w(t)\bigr) \\ \label{e-anfang} \bigl(u(0),v(0),p(0),w(0)\bigr)
    &=(u_0,v_0,p_0,w_0)
  \end{align}
  for almost all $t \in I$ in $L^p \times \vecL^p$ for~\eqref{e-preu}--\eqref{e-prew}, where the time
  derivative is taken in the sense of vector valued distributions and the initial values satisfy
  \[(u_0,v_0,p_0,w_0) \in \bigl(L^p,\domLap\bigr)_{1-\frac {1}{r},r} \times \bigl((L^p,\domLap)_{1-\frac
      {1}{s},s}\bigr)^3 =: \mathrm{IV}(r,s).\] The operator $-\Delta$ is here to be understood as
  $A_p(1)$, i.e., the restriction of the weak (negative) Laplacian to $L^p$.
\end{definition}

\begin{rem} \label{r-justi1}
  \begin{enumerate}[i)]
  \item In the original model, we had the specific inhomogeneities \begin{align*} R_1(u,v,p,w) & = 0, \\
      R_2(u,v,p,w) &= -r_1vp +r_{-1} w +uf(v),\\ R_3(u,v,p,w) &= -r_1vp +(r_{-1}+r_2)w +u g(v,p),\\
      R_4(u,v,p,w) &= r_1vp -(r_{-1}+r_2)w,
    \end{align*}
    cf.~\eqref{e-u}--\eqref{e-w}. If $f$ and $g$ are continuously differentiable as real functions, this
    choice clearly satisfies the assumptions on the functions $R_i$ as in Assumption~\ref{a-Koeff}.
  \item For almost all $t \in I$ the functions $u(t,\cdot), v(t,\cdot), p(t,\cdot), w(t,\cdot)$ each lie
    in the space $\domLap$, hence for these $t$ a \emph{homogeneous Neumann condition}
    is fulfilled
    in a generalized sense, cf.~Remark~\ref{rem:A2}.
  \item The regularity of the initial values in $\mathrm{IV}(r,s)$ is exactly the optimal one for the
    class of solutions as defined in Definition~\ref{d-1}, cf.\ Remark~\ref{rem:IVpassend}.
  \item Definition~\ref{d-1} is in fact faithful to itself in the sense that the functions and mappings
    indeed map into the correct spaces, see also Remark~\ref{r-justi2} below.
  \end{enumerate}
\end{rem}
We formulate now the main result of this work.

\begin{theorem} \label{t-mainres} Under Assumption~\ref{a-reg}, problem~\eqref{e-u}--\eqref{e-IVs} admits
  exactly one local-in-time general solution in the spirit of Definition~\ref{d-1}. Moreover, the
  solutions $(v,p,w)$ are uniformly bounded in $L^\infty$ over the maximal interval of existence.
\end{theorem}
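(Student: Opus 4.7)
The plan is to execute the reduction strategy announced in the introduction: first solve the semilinear subsystem \eqref{e-prev}--\eqref{e-prew} for $(v,p,w)$ parametrically in $u$, thereby constructing a Volterra-type solution operator $\Phi$ with $\Phi(u) = (v(u),p(u),w(u))$, and then substitute back into \eqref{e-preu} to obtain a single reduced quasilinear equation for $u$ to which Amann's Theorem~\ref{t-Amann} applies. The full quadruple is then recovered as $(u,\Phi(u))$.

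\emph{Step 1: the $(v,p,w)$-subsystem.} For fixed $u \in \mr^r(I;\domLap,L^p)$, equations \eqref{e-prev}--\eqref{e-prew} are semilinear with diagonal principal part $\diag(-k_v\Delta,-k_p\Delta,-k_w\Delta)$, each factor admitting maximal parabolic $L^s$-regularity on $L^p$ by Theorem~\ref{t-unsreal}. Via Lemma~\ref{l-maxparregfacts}(i) and Theorem~\ref{t-multiplier} one has the embedding $\mr^s(I;\domVecLap,\vecL^p)\embeds C(\overline I;\mathbb W^{1,q}) \embeds C(\overline I \times \overline\Omega)^3$, so the $C^2$-superpositions $R_2,R_3,R_4$ act Lipschitz-continuously on bounded subsets of $\mr^s$. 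A standard contraction argument on a sufficiently short time interval, patched together, produces a unique solution map $\Phi\colon u\mapsto(v(u),p(u),w(u))$ that is Lipschitzian on bounded subsets of $\mr^r(I;\domLap,L^p)$ and Volterra by uniqueness. The choice $s>r$ will supply the extra time integrability needed in Step~3.

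\emph{Step 2: reduction and Amann's hypotheses.} Substituting $\Phi$ into \eqref{e-preu} yields $u'(t) + \mathcal A(u)(t)\,u(t) = F(u)(t)$, $u(0)=u_0$, with $\mathcal A(u)(t):=A(\afo(u(t),v(u)(t)))$ and $F(u)(t):=A(\bfo(u(t),v(u)(t)))\,v(u)(t) + R_1(u(t),v(u)(t),p(u)(t),w(u)(t))$. The constant-domain setting $Y=\domLap$, $X=L^p$ required by Theorem~\ref{t-Amann} is delivered by Lemma~\ref{l-domAIN}: the embedding $\mr^r\embeds C(\overline J;W^{1,q})$ and $\afo\in C^2(\R^2;\iv{0,\infty})$ guarantee that $\afo(u,v(u))\in C(\overline J;W^{1,q})$ with a positive lower bound uniform in $t$, so $\dom A(\afo(u(t),v(u)(t)))=\domLap$ with uniformly equivalent norms. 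Corollary~\ref{c-0stetfu} then yields continuity of $t\mapsto\mathcal A(u)(t)\in\LL(\domLap;L^p)$ and its Lipschitz dependence on $u$, thereby verifying hypothesis~(i). Hypothesis~(ii) (the isomorphism property) is a consequence of Theorem~\ref{t-pruessschnaub}, whose pointwise assumption is precisely Theorem~\ref{t-unsreal}. Hypothesis~(v) is built into the definition of $\mathrm{IV}(r,s)$, and the Volterra property (iv) is inherited from $\Phi$.

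\emph{Step 3: the right-hand side \textup{---} the main obstacle.} The delicate point is hypothesis~(iii): Lipschitz continuity of $F\colon\mr^r(J;\domLap,L^p)\to L^s(J;L^p)$ with $s>r$. The $R_1$-contribution is harmless, mapping into $C(\overline J;L^\infty)$ via the superposition on $C(\overline\Omega)$-valued functions. The obstacle is the term $A(\bfo(u,v(u)))\,v(u)$, whose coefficient may change sign. Here Lemma~\ref{l-domAIN}(i) is crucial: for $\bfo(u(t),v(u)(t))\in W^{1,q}$ and $v(u)(t)\in\domLap$ it yields
\[
\bigl\|A(\bfo(u(t),v(u)(t)))\,v(u)(t)\bigr\|_{L^p} \le C\bigl(1+\|\bfo(u(t),v(u)(t))\|_{W^{1,q}}\bigr)\,\|v(u)(t)\|_{\domLap},
\]
and since $v(u)\in\mr^s$ while $\bfo(u,v(u))$ is uniformly bounded in $C(\overline J;W^{1,q})$, the right-hand side is $L^s$-integrable in $t$. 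The Lipschitz estimate additionally exploits the Hölder embedding $\mr^r\embeds C^\alpha(\overline J;(L^p,\domLap)_{\varrho,1})\embeds C^\alpha(\overline J;W^{1,q})$ from Lemma~\ref{l-maxparregfacts}(iii) combined with Theorem~\ref{t-multiplier} for appropriate $\varrho,\alpha$, in order to handle the composition through the $C^2$-maps $\afo,\bfo,R_i$. With all five hypotheses verified, Theorem~\ref{t-Amann} yields a unique local-in-time solution $u$, and $(v,p,w):=\Phi(u)$ completes the quadruple. The uniform $L^\infty$-bound on $(v,p,w)$ then follows from the embedding $\mr^s(I_\bullet;\domVecLap,\vecL^p)\embeds C(\overline{I_\bullet};\mathbb W^{1,q})\embeds C(\overline{I_\bullet};C(\overline\Omega))^3$ on the maximal interval $I_\bullet$ furnished by the theorem.
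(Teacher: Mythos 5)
Your overall architecture (decouple the $(v,p,w)$-subsystem, build a Volterra solution operator, substitute back into the $u$-equation and invoke Theorem~\ref{t-Amann}) is the same as the paper's, and Steps~2 and~3 are essentially sound. The genuine gap is in Step~1. Amann's theorem requires $\mathcal A$ and $F$ to be defined, and Lipschitz on bounded sets, on the \emph{whole} space $\mr^r(J;\domLap,L^p)$ over the \emph{full} interval $J$; hence your operator $\Phi$ must produce, for \emph{every} such $u$, a solution $(v,p,w)$ on all of $J$. But the $R_i$ are merely $C^2$ with no growth or sign restriction, so the subsystem \eqref{e-prev}--\eqref{e-prew} is genuinely nonlinear and a contraction argument only yields a local solution; ``patching together'' short intervals does not give global existence, since the local existence time shrinks with the size of the solution and nothing rules out finite-time blow-up of $(v,p,w)$ before $T$. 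An existence interval $I(u)$ depending on $u$ makes hypotheses~(i) and~(iii) of Theorem~\ref{t-Amann} unverifiable. The paper circumvents exactly this obstacle by truncating the nonlinearities (the $R_i^\eta$ of Definition~\ref{d-2}), which renders them globally bounded and Lipschitz so that the modified subsystem is globally solvable on $J$ (Theorem~\ref{t-hilfs}); the splitting $v = v_{\mathcal I} + \check v$ with $\check v(0)=0$ then permits removing the cut-off a posteriori on a possibly smaller interval $I_0$, which is the actual content of the paper's concluding proof of Theorem~\ref{t-mainres}. Your proposal is missing both the truncation and this final de-truncation step.

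A secondary point: your derivation of the uniform $L^\infty$-bound for $(v,p,w)$ from the embedding $\mr^s(I_\bullet;\domVecLap,\vecL^p)\embeds C(\overline{I_\bullet};C(\overline\Omega))^3$ is not justified, because on the maximal interval the solution is only known to lie in $\mr^s(I)$ for each $I=\iv{0,S}$ with $S<S_\bullet$, and these norms may be unbounded as $S\to S_\bullet$. In the paper the uniform bound is instead a by-product of the cut-off construction: on the interval where the cut-off is inactive one has $\|v(t)\|_{C(\overline\Omega)}\le \|v_{\mathcal I}(t)\|_{C(\overline\Omega)}+\|\check v(t)\|_{C(\overline\Omega)}\le M$, and similarly for $p$ and $w$.
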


\begin{rem}
  Considering the derivation of the model in the introductory chapter, the question of
  \emph{positivity} of the solutions $(u,v,p,w)$ in the sense of Definition~\ref{d-1}---provided their
  initial values were positive in the first place---arises naturally. It is a standard result in the
  theory of reaction-diffusion systems (cf.\ e.g.~\cite{Pierre10}) that a system in the
  form~\eqref{e-prev}--\eqref{e-prew} is positivity preserving if and only if the inhomogeneities
  $R_2(\bar u,\cdot),R_3(\bar u,\cdot),R_4(\bar u,\cdot)$ are \emph{quasipositive} for every
  $\bar u \in \R$, that is, if $(\bar v,\bar p,\bar w)$ is an arbitrary vector in $\R^3$ with nonnegative
  entries, then
  \begin{equation*}
    R_2(\bar u,0,\bar p,\bar w) \geq 0, \quad     R_3(\bar u,\bar v,0,\bar w) \geq 0 \quad \text{and} \quad    R_4(\bar u,\bar v,\bar p,0) \geq 0.
  \end{equation*}
  The specific inhomogeneities in~\eqref{e-v}--\eqref{e-w}, cf.~Remark~\ref{r-justi1}, indeed satisfy
  this condition if $g(\bar v,0) \geq 0$ and $f(\bar u) \geq 0$ for nonnegative $\bar v,\bar u \geq
  0$. Hence,~\eqref{e-prev}--\eqref{e-prew} is positivity preserving for $(v,p,w)$ if $u$ is also a
  positive function, i.e.,~\eqref{e-preu} is \emph{also} positivity preserving. Unfortunately, the latter
  seems very difficult to show in the very general context of Definition~\ref{d-1}, even with $R_1 = 0$,
  and is generally not true for seemingly easy cases, see~\cite[Ch.~5]{Nagel89}. However, for the
  specific choices $\afo(u,v) = 1$ and $\bfo(u,v) = -u$, already mentioned in the introduction as
  well-researched model choices, positivity of $u$ is shown in~\cite[Thm.~3.3]{Gajewski:1998}
  \emph{independent} of the sign of $v$. The proof in~\cite{Gajewski:1998} only relies on the fact that
  $v$ is uniformly bounded in time and space, which is the case for our solutions obtained from
  Theorem~\ref{t-mainres}. Hence, for this choice of $\afo$ and $\bfo$, $R_1 = 0$ and
  $R_2(\bar u,\cdot),R_3(\bar u,\cdot),R_4(\bar u,\cdot)$ quasipositive for $\bar u \geq 0$,
  system~\eqref{e-preu}--\eqref{e-prew} is indeed positivity preserving. This includes in particular
  system~\eqref{e-u}--\eqref{e-w} for this choice of $\afo$ and $\bfo$ and $f,g$ as mentioned above.
\end{rem}

We now proceed with the proof of the main result.

\subsection{The proof}\label{s-proof}
The actual proof of Theorem~\ref{t-mainres} works in as follows. It should be evident to the reader that
we plan to use the abstract result of Amann, Theorem~\ref{t-Amann}. The general idea is to solve the
semilinear equations for $(v,p,w)$,~\eqref{e-prev}--\eqref{e-prew}, in dependence of $u$, and to show
that this dependence re-inserted in the first equation for $u$ satisfies the assumptions in
Theorem~\ref{t-Amann}. Here, it is clear that the dependence of $(v,p,w)$ on $u$ will be \emph{nonlocal
  in time}, which indeed makes Theorem~\ref{t-Amann}---instead of other well-known abstract quasilinear
existence results---necessary.

However, as~\eqref{e-prev}--\eqref{e-prew} are \emph{nonlinear} equations themselves, it is not \emph{a
  priori} clear that they in fact admit global solutions on the whole time horizon $\iv{0,T}$, and a
local-in-time existence interval $I(u)$ for $(v,p,w)$ depending on $u$ would clearly thwart any attempt
to establish the assumptions from Theorem~\ref{t-Amann}. Hence, we modify the right-hand sides
in~\eqref{e-prev}--\eqref{e-prew} by introducing a suitable cut-off, which then allows to show global
existence, uniqueness, and a well-behaved dependence on $u$ for the solutions $(\hat v,\hat p,\hat w)$ of
the modified lower system (\eqref{e-precv}--\eqref{e-precw} below); this is Theorem~\ref{t-hilfs}.

After establishing that the involved operators and functions satisfy the assumptions of
Theorem~\ref{t-Amann}, we then use that very theorem to show existence and uniqueness of a local-in-time
solution $u$ to the \emph{modified} system, including the equation for $u$, in Theorems~\ref{l-loes0}
and~\ref{l-modisystem}. From there, we finally obtain Theorem~\ref{t-mainres} by showing that the
local-in-time solution obtained for the modified system is indeed also the solution to the original
system~\eqref{e-preu}--\eqref{e-anfang} at the cost of a possibly still smaller existence interval.

Aside from the dependence of $(v,p,w)$ on $u$, there is another major obstacle when working to satisfy
the assumptions of Theorem~\ref{t-Amann}: Assumption~\ref{t-Amann-evil}) of said theorem in fact
requires, in our notation, that the differential operators, which will be
$A\bigl(\kappa(u(t),v(u)(t))\bigr)$, have \emph{uniform} domains $Y$ for \emph{all}
$u \in \mr^r(J;Y,L^p)$ and for almost every $t \in J$. Thanks to Lemma~\ref{l-domAIN}, we will be able to
use $Y = \domLap$, provided that the coefficient functions $\kappa(u(t),v(u)(t))$ are from $W^{1,q}$ for
almost every $t \in J$. We have already laid the foundations to show this in Lemma~\ref{l-carryover},
together with the maximal regularity embedding~\eqref{e-embedcont}, which together immediately yield the
following introductory result which is of importance in all what follows.

\begin{lemma} \label{l-embeddd} Set $\alpha =\frac {1}{2} -\frac {d}{2q} - \frac {1}{r}$. By the choice
  of $r$, we have $\alpha > 0$.
  \begin{enumerate}[i)] \item The space $\mr^r(J;\dom_{L^p}(\Delta),L^p)$ embeds into
    $C^\alpha(J;W^{1,q})$ and, hence, compactly into $C(\overline J;C(\overline \Omega))$.
  \item Analogously, $\mr^s_0(J;\dom_{\vecL^p}(\Delta),\vecL^p)$ embeds into
    $C^\alpha(J;\mathbb W^{1,q})$ and, hence, compactly into $C(\overline J;C(\overline \Omega)^3)$.
  \end{enumerate}
\end{lemma}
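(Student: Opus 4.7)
The plan is to chain two previously established embeddings: the Hölder-in-time embedding for maximal regularity spaces from Lemma~\ref{l-maxparregfacts}~iii), and the spatial embedding of the real interpolation space $(L^p,\dom_{L^p}(\Delta))_{\theta,1}$ into $W^{1,q}$ from Lemma~\ref{l-carryover}. First, the positivity of $\alpha$ follows immediately from the defining condition $r > 2(1-\tfrac{d}{q})^{-1}$, which rearranges to $\tfrac{1}{r} < \tfrac{1}{2} - \tfrac{d}{2q}$, i.e., $\alpha = \tfrac{1}{2}-\tfrac{d}{2q}-\tfrac{1}{r} > 0$.

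For part i), I would set $\varrho := \tfrac{1}{2} + \tfrac{d}{2q}$, so that $1 - \varrho - \tfrac{1}{r} = \alpha$. Applying Lemma~\ref{l-maxparregfacts}~iii) with $X = L^p$ and $Y = \dom_{L^p}(\Delta)$ then yields
\begin{equation*}
  \mr^r(J;\dom_{L^p}(\Delta),L^p) \embeds C^\alpha\bigl(J;(L^p,\dom_{L^p}(\Delta))_{\varrho,1}\bigr).
\end{equation*}
Recalling that $-\Delta = A_p(1)$ and $p = q/2$, Lemma~\ref{l-carryover} applied with $\phi \equiv 1$ and $\theta = \varrho$ (which lies in the admissible range $[\tfrac{1}{2}+\tfrac{d}{2q},1[$) delivers $(L^p,\dom_{L^p}(\Delta))_{\varrho,1} \embeds W^{1,q} \embeds C(\overline\Omega)$, the last step being the classical Sobolev embedding since $q > d$ and $\Omega$ has the extension property. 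Chaining these yields the first asserted embedding. For the subsequent compact embedding into $C(\overline J;C(\overline\Omega))$, I would invoke Arzela-Ascoli together with the Rellich-Kondrachov compact embedding $W^{1,q} \embeds\embeds C(\overline\Omega)$: a bounded family in $C^\alpha(J;W^{1,q})$ is equicontinuous as a family of $C(\overline\Omega)$-valued maps on $\overline J$ and pointwise relatively compact in $C(\overline\Omega)$.

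Part ii) reduces to part i) componentwise: $\vecL^p = (L^p)^3$ and the vector Laplacian splits into three independent copies of the scalar Laplacian, so $(\vecL^p,\dom_{\vecL^p}(\Delta))_{\varrho,1}$ is just the threefold direct sum of $(L^p,\dom_{L^p}(\Delta))_{\varrho,1}$, and the same chain applies coordinatewise. The only nominal difference is that $s > r$, which actually yields a \emph{better} Hölder exponent $\alpha_s := \tfrac{1}{2}-\tfrac{d}{2q}-\tfrac{1}{s} > \alpha$; since $J$ is bounded, $C^{\alpha_s}(J;\mathbb W^{1,q}) \embeds C^{\alpha}(J;\mathbb W^{1,q})$, so the claim is obtained. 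The zero-initial-value restriction encoded in $\mr^s_0$ is irrelevant for the embedding. I do not anticipate any serious obstacle; the lemma is essentially bookkeeping of the interpolation and trace-type embeddings already set up, and the only subtlety is matching the index $\varrho$ with the admissible range in Lemma~\ref{l-carryover}.
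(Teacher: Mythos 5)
Your proposal is correct and follows essentially the same route as the paper: the H\"older-in-time embedding of Lemma~\ref{l-maxparregfacts}~iii) with $\varrho=\tfrac12+\tfrac{d}{2q}$, chained with the spatial embedding of Lemma~\ref{l-carryover}, and Arzel\`a--Ascoli for the compactness. The extra details you supply (the index bookkeeping, the componentwise reduction for ii), and the remark that $s>r$ only improves the exponent) are all consistent with the paper's terser argument.
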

\begin{proof}The compactness in both cases follows by the vector-valued Arzel\`{a}-Ascoli theorem,
  cf.~\cite[Ch.~III.3]{lang}. For i),~the condition on $r$ implies
  $1-\frac {1}{r} -\bigl (\frac {1}{2}+ \frac {d}{2q}) >0$.  Thus, the claim follows from
  Lemma~\ref{l-maxparregfacts}, cf.~\eqref{e-maxreghoelderembed}, in conjunction with
  Lemma~\ref{l-carryover}.  ii)~is proved analogously.
\end{proof}

\begin{rem} \label{r-justi2} For $u \in \mr^r(I;\dom_{L^p}(\Delta),L^p)$ and
  $v \in \mr^s(I;\dom_{L^p}(\Delta),L^p)$ with $I$ as in Definition~\ref{d-1}, Lemma~\ref{l-embeddd}
  together with Lemma~\ref{l-domAIN} and the assumptions on $\afo$ and $\bfo$ (cf.\ Assumption~\ref{a-reg}) tells us that $\afo(u(t),v(t))$ and $\bfo(u(t),v(t))$ are each functions from $W^{1,q}$
  for every $t \in \overline I$. Together with $u(t),v(t) \in \domLap$ for almost every $t \in I$, this
  shows that the expressions $A(\afo(u(t) ,v(t))) u(t)$ and $A(\bfo(u(t), v(t))) v(t)$ in~\eqref{e-preu}
  are indeed well-defined. See also Lemmata~\ref{l-Lipschitzconty} and~\ref{l-F} below.
\end{rem}

We will now modify the abstract system~\eqref{e-preu}--\eqref{e-prew} in such a way that the terms on the
right hand sides of~\eqref{e-prev}--\eqref{e-prew} become bounded in space and time. This will
ultimately lead to a solution in the spirit of Definition~\ref{d-1} on a \emph{smaller} time interval,
since the modification becomes ``active'', only after some time point $T_\bullet >0$, allowing to
re-obtain the correct solution to the unmodified system on $[0,T_\bullet]$.

We consider
\begin{equation}
  (v_0,p_0,w_0) \in \bigl((L^p,\domLap)_{1-\frac
    {1}{s},s}\bigr)^3\label{e-IVreg}
\end{equation}
to be given and fixed from now on.

\begin{definition} \label{d-2} For $\delta >0$, we put
  $M:= \delta+ \max( \|v_0\|_{L^\infty}, \|p_0\|_{L^\infty},\|w_0\|_{L^\infty})$.  Let
  $\eta \in C^\infty(\R)$ be a smooth function which is the identity on the interval $[-M,M]$ and is
  equal to $-(M+1)$ on the interval $\mathopen]-\infty, -(M+1)]$ and equal to $M+1$ on the interval
  $[M+1,\infty\mathclose[$.  Moreover, we put
  $R^{\eta}_i:=R_i(\cdot, \eta(\cdot),\eta(\cdot),\eta(\cdot))$ for $i = 2,3,4$.
\end{definition}

Note that, due to Lemma~\ref{l-carryover} and the choice of $s$, we have the embedding 
$(L^p,\domLap)_{1-\frac{1}{s},s} \embeds C(\overline \Omega)$, such that the number $M$ in
Definition~\ref{d-2} is well-defined.

We further split off the initial values for the functions $v,p,w$ for which we put
$v_\mathcal I(t)=\sg{t\,k_v \Delta}v_0$ as well as $p_\mathcal I(t)=\sg{t\,k_p \Delta}p_0$ and
$w_\mathcal I(t)=\sg{t\,k_w \Delta}w_0$, and write
\begin{equation} \label{e-splitoff} v=v_\mathcal I +\check v,\quad p=p_\mathcal I +\check p,\quad
  w=w_\mathcal I + \check w,
\end{equation}
where $\check v, \check p$ and $\check w$ have the initial value $0$, of course.

For convenience, we collect some of the properties for the functions $v_{\mathcal{I}}, p_{\mathcal{I}}$
and $w_{\mathcal{I}}$ which will be of importance later.
\begin{lemma} \label{l-innisplit} Let the initial values $(v_0,p_0,w_0)$ satisfy~\eqref{e-IVreg}.
  \begin{enumerate}[i)]
  \item One has
    \begin{equation} \label{e-inhomanf} v_{\mathcal{I}}' - k_v \Delta v_{\mathcal{I}} = p_{\mathcal{I}}'
      - k_p \Delta p_{\mathcal{I}} = w_{\mathcal{I}}' - k_w \Delta w_{\mathcal{I}} \equiv 0
    \end{equation}
    on any time interval $]0,S[\ \subseteq J$.
  \item The functions $v_{\mathcal{I}}, p_{\mathcal{I}}$ and $w_{\mathcal{I}}$ are each from
    $\mr^s(J;\domLap,L^p)$, take their values pointwise on $J$ in $W^{1,q}$, and are continuous on every
    time interval $[0,S[\ \subset \overline{J}$.
  \item The functions $v_{\mathcal{I}}, p_{\mathcal{I}}$ and $w_{\mathcal{I}}$ are continuous on every
    time interval $[0,S[\ \subset \overline{J}$ when considered as $C(\overline
    \Omega)$-valued. Moreover, in this case we have
    \begin{multline*}
      \|v_{\mathcal{I}}(t)\|_{C(\overline \Omega)} \le \|v_{\mathcal{I}}(0)\|_{C(\overline \Omega)},
      \quad \|p_{\mathcal{I}}(t)\|_{C(\overline \Omega)} \le \|p_{\mathcal{I}}(0)\|_{C(\overline
        \Omega)}, \\ \text{and} \quad \|w_{\mathcal{I}}(t)\|_{C(\overline \Omega)} \le
      \|w_{\mathcal{I}}(0)\|_{C(\overline \Omega)}
    \end{multline*}
    for every $s \in S$
  \end{enumerate}
\end{lemma}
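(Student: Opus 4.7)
My plan is to treat $v_{\mathcal{I}}$, $p_{\mathcal{I}}$, $w_{\mathcal{I}}$ uniformly as orbits of the analytic semigroup generated by $k_\bullet \Delta = -k_\bullet A_p(1)$ on $L^p$; I will spell the argument out only for $v_{\mathcal{I}}$, since the two remaining cases are literally identical. For (i), Proposition~\ref{prop:collection-Ap} tells me that $-A_p(1)$ generates an analytic semigroup on $L^p$ with domain $\domLap$. Standard analytic semigroup theory then yields that $v_{\mathcal{I}}(t) = \sg{tk_v\Delta}v_0$ lies in $\domLap$ for every $t>0$ and solves $v_{\mathcal{I}}'(t) - k_v \Delta v_{\mathcal{I}}(t) = 0$ in $L^p$ on each subinterval $\iv{0,S}\subseteq J$, which is (i).

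For (ii), by assumption $v_0 \in (L^p,\domLap)_{1-\frac1s,s}$, so Lemma~\ref{l-maxparregfacts}~ii), applied to the generator $k_v\Delta$, directly yields $v_{\mathcal{I}} \in W^{1,s}(J;L^p) \cap L^s(J;\domLap) = \mr^s(J;\domLap,L^p)$. Because $s > r > 2(1-\tfrac{d}{q})^{-1}$, the index inequality underlying Lemma~\ref{l-embeddd} is again satisfied with $s$ in place of $r$, so $v_{\mathcal{I}} \in C^\beta(\overline J;W^{1,q})$ for some $\beta > 0$; in particular the pointwise values of $v_{\mathcal{I}}$ lie in $W^{1,q}$ and depend continuously on $t$, which is (ii).

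For (iii), continuity as a $C(\overline\Omega)$-valued function is immediate from (ii) together with the Sobolev embedding $W^{1,q} \embeds C(\overline\Omega)$, which is available because $q > d$. For the pointwise norm estimate I rely on the fact that the Neumann heat semigroup $\{\sg{-tA_p(1)}\}_{t \ge 0}$ is sub-Markovian: by Proposition~\ref{prop:collection-Ap}~iii) it is $L^2$-contractive and positivity preserving, and the homogeneous Neumann boundary condition built into $\dom(A_p(1))$ (cf.~Remark~\ref{rem:A2}) means that constants are preserved by the semigroup. These three properties together classically imply $L^\infty$-contractivity; since $v_{\mathcal{I}}(t)$ is continuous on $\overline\Omega$ by (ii), its $L^\infty$- and $C(\overline\Omega)$-norms coincide, giving $\|v_{\mathcal{I}}(t)\|_{C(\overline\Omega)} \le \|v_{\mathcal{I}}(0)\|_{C(\overline\Omega)}$. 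The only mildly non-routine point is this deduction of $L^\infty$-contractivity, which is not stated explicitly in Proposition~\ref{prop:collection-Ap} but is a standard consequence of positivity, $L^2$-contractivity and conservation of constants in sub-Markovian semigroup theory, for which I would cite~\cite{Ouh05}.
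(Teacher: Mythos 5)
Your proof is correct and follows essentially the same route as the paper: (i) is immediate from analytic semigroup theory, (ii) rests on Lemma~\ref{l-maxparregfacts}~ii) together with the choice of $s$ and the embedding into $W^{1,q}$ from Lemma~\ref{l-carryover}, and (iii) uses $W^{1,q}\embeds C(\overline\Omega)$ plus $L^\infty$-contractivity of the Neumann semigroup. The only cosmetic difference is that the paper reads the $L^\infty$-contraction directly off Proposition~\ref{prop:collection-Ap}~iii) (via the cited results in~\cite{Ouh05}), whereas you re-derive it from positivity and conservation of constants --- a valid, standard argument that changes nothing of substance.
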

\begin{proof}
  i)~is clear. ii)~Lemma~\ref{l-maxparregfacts}~ii) shows that the functions
  $v_{\mathcal{I}}, p_{\mathcal{I}}, w_{\mathcal{I}} $ are continuous when considered as
  $(L^p,\domLap)_{1-\frac {1}{s},s}$-valued ones. Thus, the assertion follows from
  Lemma~\ref{l-carryover} and the definition of $s$.  iii)~The first assertion follows from~ii) by
  embedding $W^{1,q} \embeds C(\overline \Omega)$.  Moreover, since the semigroups act as
  \emph{contractive} ones in $L^\infty$, cf.\ Proposition~\ref{prop:collection-Ap}, the evolution of the
  initial values $v_0, p_0, w_0$ does not lead to larger $L^\infty$-norms. The latter is identical with
  the $C(\overline \Omega)$-norm in our case.
\end{proof}

Having introduced the modified nonlinearities $R^\eta_i$ and the split-off of the initial values, we
combine both into the functions $\S_i \colon J \times C(\overline\Omega) \times \vecL^p \to L^p$ by
\begin{equation*} 
  \S_i(t;\mathfrak u,\mathfrak v,\mathfrak p,\mathfrak w) :=
  R^\eta_i\bigl(\mathfrak u,v_{\mathcal{I}}(t) + \mathfrak
  v,p_{\mathcal{I}}(t) + \mathfrak p,w_{\mathcal{I}}(t) + \mathfrak w \bigr)
\end{equation*}
for $i = 2,3,4$, and
\[\S_1(t;\mathfrak u,\mathfrak v,\mathfrak p,\mathfrak w) :=
  R_1\bigl(\mathfrak u,v_{\mathcal{I}}(t) + \mathfrak v,p_{\mathcal{I}}(t) + \mathfrak
  p,w_{\mathcal{I}}(t) + \mathfrak w \bigr).\] Then we consider instead of~\eqref{e-preu}--\eqref{e-anfang} the system
\begin{align}
  u'(t)+ A\bigl(\afo(u(t) ,v_{\mathcal{I}}(t) + v(t))\bigr) u(t) & =
  A\bigl(\bfo(u(t),v_{\mathcal{I}}(t) + v(t)\bigr)\bigl(v_{\mathcal{I}}(t) +
  v(t)\bigr) \notag \\ & \qquad
  + \S_1\bigl(t;u(t),v(t),p(t),w(t)\bigr), \label{e-precu}\\
  \label{e-precv}
  v'(t)- k_v \Delta v(t) & = \S_2\bigl(t;u(t),v(t),p(t),w(t)\bigr), \\
  p'(t)- k_p \Delta p(t) & = \S_3\bigl(t;u(t),v(t),p(t),w(t)\bigr), \\
  \label{e-precw}
  w'(t)- k_w \Delta w(t) & = \S_4\bigl(t;u(t),v(t),p(t),w(t)\bigr), \\
  \label{e-precIV}
  (u(0),v(0),p(0),w(0)) & = (u_0,0,0,0)
\end{align}
as equations in the Banach space $L^p \times \vecL^p \times \mathrm{IV}(r,s)$, holding for almost every
$t \in I$ for the first four components. Note that we have, by abuse of notation, returned to writing
$v,p$ and $w$ instead of $\check{v},\check{p}$ and $\check{w}$ as introduced in~\eqref{e-splitoff} for
better readability. Since we work exclusively with the functions with initial value $0$ from here on,
this should not give rise to confusion to the reader.

After these preparations we prove the subsequent theorem, from which our main result,
Theorem~\ref{t-mainres}, then follows (and which is in fact only a slight reformulation of this).
\begin{theorem} \label{l-modisystem} For given $(u_0,v_0,p_0,w_0) \in \mathrm{IV}(r,s)$, the system~\eqref{e-precu}--\eqref{e-precIV} admits exactly one local-in-time solution
  \[\bigl(u,(v,p,w)\bigr) \in \mr^r(I;\dom_{ L^p}(\Delta), L^p) \times \mr^s_0(I;\dom_{\mathbb
      L^p}(\Delta),\vecL^p),\] with $I = \iv{0,S} \subseteq J$.
\end{theorem}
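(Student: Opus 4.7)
The plan is to execute the two-stage strategy outlined at the beginning of this section: first produce a global solution operator $u \mapsto (v(u), p(u), w(u))$ for the modified lower system~\eqref{e-precv}--\eqref{e-precw}, treating $u$ as a parameter, and then substitute this operator into~\eqref{e-precu} to arrive at a single scalar quasilinear parabolic equation for $u$ whose dependence on $u$ is nonlocal in time. The resulting problem is then put into the format of Amann's Theorem~\ref{t-Amann}.

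For the first step, fix $u \in \mr^r(J; \domLap, L^p)$; Lemma~\ref{l-embeddd}~i) turns $u$ into a bounded continuous scalar function on $\overline J \times \overline\Omega$. Thanks to the cut-off $\eta$ from Definition~\ref{d-2}, the nonlinearities $\S_2, \S_3, \S_4$ act as globally bounded and globally Lipschitz mappings $C(\overline\Omega)^3 \to L^p$, uniformly in $t$ and in $u$ on bounded subsets. Combining this with the maximal $L^p$-regularity of $-k_v\Delta, -k_p\Delta, -k_w\Delta$ from Theorem~\ref{t-unsreal} and the estimate~\eqref{eq:MPRestimate}, a standard Banach fixed-point argument on $\mr^s_0(J;\domVecLap,\vecL^p)$---using the embedding of Lemma~\ref{l-embeddd}~ii) to feed iterates into the $\S_i$---produces a unique solution $(v(u), p(u), w(u))$ on all of $J$. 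A parallel difference estimate shows that $u \mapsto (v(u), p(u), w(u))$ is Lipschitz continuous on bounded subsets, and the map is manifestly a Volterra one, since it is obtained by solving an initial-value problem.

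For the second step, I would recast~\eqref{e-precu} as
\begin{equation*}
  u'(t) + \mathcal{A}(u)(t)\,u(t) = F(u)(t), \qquad u(0) = u_0,
\end{equation*}
with $\mathcal{A}(u)(t) := A\bigl(\afo(u(t), v_{\mathcal I}(t) + v(u)(t))\bigr)$ and $F(u)(t)$ gathering the remaining terms, and verify the five hypotheses of Theorem~\ref{t-Amann} with $X = L^p$ and $Y = \domLap$. By Lemma~\ref{l-embeddd} and Lemma~\ref{l-innisplit}, both $u$ and $v_{\mathcal I} + v(u)$ are H\"older continuous into $W^{1,q}$, so by Assumption~\ref{a-Koeff} the compositions $\afo(u, v_{\mathcal I} + v(u))$ and $\bfo(u, v_{\mathcal I} + v(u))$ lie in $C(\overline J; W^{1,q})$ with Lipschitz dependence on $u$. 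Since $\afo$ takes strictly positive values, Lemma~\ref{l-domAIN} yields $\dom \mathcal{A}(u)(t) = \domLap$ uniformly in $t$ and $u$, and Corollary~\ref{c-0stetfu} combined with the Lipschitz dependence $u \mapsto v(u)$ delivers assumption~i). The isomorphism property~ii) follows from Theorem~\ref{t-pruessschnaub}: its hypothesis~(H1) is the continuity of $t \mapsto \mathcal{A}(u)(t)$ just noted, and~(H2) reduces for frozen $t$ to the autonomous maximal regularity statement of Theorem~\ref{t-unsreal}. Assumption~iii)---that $F\colon \mr^r(J;\domLap,L^p) \to L^s(J;L^p)$ is Lipschitz on bounded sets---uses Lemma~\ref{l-domAIN}~i) to bound the principal term $A(\bfo(\ldots))(v_{\mathcal I}+v(u))$ in $L^p$ uniformly in $t$ from the $W^{1,q}$-coefficient and the $\domLap$-valued argument, plus the boundedness and Lipschitz property of $\S_1$. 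The Volterra property~iv) is inherited from the first step, and~v) is built into the definition of $\mathrm{IV}(r,s)$. Theorem~\ref{t-Amann} then delivers the unique $u \in \mr^r(I;\domLap,L^p)$, and setting $(v,p,w) := (v(u), p(u), w(u))|_I$ completes the quadruple.

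The main obstacle is the Lipschitz continuity required in assumptions~i) and especially~iii), because the term $A(\bfo(u, v_{\mathcal I}+v(u)))(v_{\mathcal I}+v(u))$ couples a $W^{1,q}$-valued coefficient to a $\domLap$-valued argument and its differences in $u$ must be controlled in $L^s(J;L^p)$. The key is to combine the $C^\alpha(J;W^{1,q})$-regularity of both factors (Lemma~\ref{l-embeddd}, Lemma~\ref{l-innisplit}) with the second-order bound in Lemma~\ref{l-domAIN} and the Lipschitz estimate for $u \mapsto v(u)$ produced in the first step. Once these estimates are carefully arranged, the remaining verifications---the Volterra property, the continuity for Theorem~\ref{t-pruessschnaub}, and the final application of Theorem~\ref{t-Amann}---are essentially routine within the framework assembled in Section~\ref{s-pre}.
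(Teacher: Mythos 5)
Your proposal is correct and reproduces the paper's overall architecture: solve the cut-off lower system globally with $u$ as a parameter, establish that the solution operator $\sol$ is Lipschitzian on bounded sets and Volterra, and then feed the reduced, nonlocal-in-time quasilinear equation for $u$ into Amann's Theorem~\ref{t-Amann}, using Lemma~\ref{l-domAIN} for the constant domain $\domLap$, Theorem~\ref{t-pruessschnaub} for hypothesis~ii), and the splitting of the right-hand side for hypothesis~iii); the final assembly is exactly the paper's proof of Theorem~\ref{l-modisystem} via Theorem~\ref{l-loes0} and Theorem~\ref{t-hilfs}. You deviate in two sub-steps, both legitimately. For Theorem~\ref{t-hilfs}~i) the paper does not run a contraction in $\mr^s_0(J;\domVecLap,\vecL^p)$; it first produces the unique \emph{mild} solution in $C(\overline J;\vecL^p)$ from Pazy's semilinear theory and then upgrades it to the maximal regularity class by noting that the right-hand side evaluated along this solution lies in $L^\infty(J;\vecL^p)$, identifying the two objects as mild solutions of the same problem. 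Your direct fixed point works for the globally Lipschitz cut-off nonlinearities (iterating over short intervals or using a weighted norm) and is arguably more self-contained. For the dependence of $\sol$ on $u$, the paper proves more than you need: continuous differentiability via the implicit function theorem (Theorem~\ref{t-hilfs}~ii)) followed by the mean value theorem on the compact convex closure of a bounded set (Corollary~\ref{c-Lipschitzian}); your difference-plus-Gronwall estimate delivers only the Lipschitz property on bounded sets, which is however all that Theorem~\ref{t-Amann} requires. One small imprecision to fix: the term $A\bigl(\bfo(\cdot)\bigr)\bigl(v_{\mathcal{I}}+\sol_1(u)\bigr)$ is \emph{not} bounded in $L^p$ uniformly in $t$, since $v_{\mathcal{I}}(t)$ belongs to $\domLap$ near $t=0$ only with an $L^s$-integrable norm; the correct assertion, as in Lemma~\ref{l-F}, is that the operator family is uniformly bounded in $\LL(\domLap;L^p)$ while the argument contributes membership in $L^s(J;\domLap)$, so the product lands in $L^s(J;L^p)$.
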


Let us re-iterate the strategy for the proof of Theorem~\ref{l-modisystem}: Firstly, we will solve the
equations~\eqref{e-precv}--\eqref{e-precw} with $u \in C(\overline J;C(\overline \Omega))$ fixed by a
fixed-point argument. The crucial point is that the dependence of these solution $(v,p,w)$ from $u$ is
well-behaved in the space $\mr^s_0(J;\domVecLap,\vecL^p)$. So implicitly inserting this
into~\eqref{e-precu}, this equation decouples from the other ones and is tractable by means of Amann's
result, Theorem~\ref{t-Amann}. Having then $u$ at hand (we prove that the assumptions of
Theorem~\ref{t-Amann} are satisfied in Theorem~\ref{l-loes0}), one ``re-discovers'' $(v,p,w)$
by~\eqref{e-precv}--\eqref{e-precw}.

\begin{theorem} \label{t-hilfs}
  \begin{enumerate}[i)]
  \item \label{t-hilfs-exist} Assume $u \in C(\overline J;C(\overline \Omega))$ to be given. Then the
    system~\eqref{e-precv}--\eqref{e-precw} has a unique solution
    $(v,p,w) \in \mr^s_0(J;\dom_{\mathbb L^p}(\Delta),\vecL^p)$.
  \item \label{t-hilfs-diff} Let
    $\sol\colon C(\overline J; C(\overline \Omega)) \to \mr^s_0(J;\domVecLap,\vecL^p)$ denote the mapping which
    assigns to $u$ the solution of~\eqref{e-precv}--\eqref{e-precw}. Then $\sol$ is continuously
    differentiable.
  \end{enumerate}
\end{theorem}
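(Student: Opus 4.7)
The plan is to exploit the maximal parabolic regularity of the Laplacian on $L^p$ (Theorem~\ref{t-unsreal} applied componentwise with $\mu \equiv 1$) in combination with the fact that the cut-off~$\eta$ makes the nonlinearities $\S_2,\S_3,\S_4$ globally bounded and globally Lipschitz in their last three arguments. This reduces~\eqref{e-precv}--\eqref{e-precw} to a semilinear problem with a uniformly Lipschitz right-hand side, for which a Banach fixed-point argument gives global existence and uniqueness, and the implicit function theorem yields the continuous differentiability of $\sol$.

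For part~i), for fixed $u \in C(\overline J;C(\overline\Omega))$, I would consider the map $\mathcal T$ which sends $(v,p,w) \in \mr_0^s(J;\domVecLap,\vecL^p)$ to the unique maximal-regularity solution of the linear system obtained by evaluating $(\S_2,\S_3,\S_4)(\cdot;u,v,p,w)$ as the source and prescribing the zero initial value. Existence of this linear solution is guaranteed by Theorem~\ref{t-unsreal}; the crucial point is that, thanks to the smooth bounded cutoff~$\eta$ and the $C^2$-regularity of the $R_i$, the right-hand side is uniformly bounded and globally Lipschitz when viewed as a map $C(\overline\Omega)^4 \to C(\overline\Omega) \embeds L^p$. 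Combining this with the embedding $\mr_0^s(I;\domVecLap,\vecL^p) \embeds C(\overline I;C(\overline\Omega)^3)$ from Lemma~\ref{l-embeddd}.ii) shows that $\mathcal T$ is a contraction on $\mr_0^s(\iv{0,S_0};\domVecLap,\vecL^p)$ for $S_0$ sufficiently small, with $S_0$ depending only on the maximal-regularity constant from~\eqref{eq:MPRestimate}, the (uniform) Lipschitz constant dictated by the cutoff, and the embedding constant evaluated at length $S_0$. Because the Lipschitz constant does not grow with the size of $(v,p,w)$, one can iterate the local existence on $\iv{S_0,2S_0},\iv{2S_0,3S_0},\ldots$ by translating time and applying a splitoff of the current value into a semigroup term à la~\eqref{e-splitoff}; this yields the unique global solution on $J$.

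For part~ii), I would invoke the implicit function theorem. Define
\[
\Phi \colon C(\overline J;C(\overline\Omega)) \times \mr_0^s(J;\domVecLap,\vecL^p) \to L^s(J;\vecL^p)
\]
by
\[
\Phi(u,V)(t) := V'(t) + \diag(-k_v\Delta,-k_p\Delta,-k_w\Delta)\,V(t) - \bigl(\S_2,\S_3,\S_4\bigr)\bigl(t;u(t),V(t)\bigr),
\]
so that $\Phi(u,\sol(u)) = 0$. The $C^2$-regularity of the $R_i$, the smoothness of $\eta$, and the embedding of Lemma~\ref{l-embeddd}.ii) imply that the induced Nemytskii operator is continuously differentiable between the chosen spaces; hence $\Phi$ is $C^1$. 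The partial derivative $\partial_V\Phi(u,V)$ is of the form $\partial_t + \diag(-k_v\Delta,\ldots) - B(u,V)$, where $B(u,V) \in L^\infty\bigl(J;\LL(\vecL^p)\bigr)$ is a bounded multiplication-type operator arising from the Jacobian of $(\S_2,\S_3,\S_4)$ in the last three arguments. This is a bounded time-continuous perturbation of a maximal parabolic regularity operator, and Theorem~\ref{t-pruessschnaub} (verifying (H1)--(H2) for the perturbed family) then ensures that $\partial_V\Phi(u,V)$ is a topological isomorphism between $\mr_0^s(J;\domVecLap,\vecL^p)$ and $L^s(J;\vecL^p)$. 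The implicit function theorem concludes $\sol \in C^1$.

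The main obstacle is the verification that the superposition operator generated by $(\S_2,\S_3,\S_4)$ is genuinely $C^1$ between the ambient function spaces, with the derivative continuously depending on the arguments in the correct operator-norm topology. This is precisely the role of the $C^2$-assumption in Assumption~\ref{a-Koeff} together with the smoothness of $\eta$: the derivative of a Nemytskii operator is again a Nemytskii operator, and its continuity requires $C^1$-regularity of the first-order partials of $R_i$. This step genuinely relies on the fact that Lemma~\ref{l-embeddd}.ii) provides values in \emph{spatially continuous} functions, so that pointwise composition with the outer functions is well-controlled in the supremum norm, which then transfers to $L^p$ by the boundedness of~$\Omega$.
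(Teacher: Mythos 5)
Your proposal is correct, and part~ii) coincides with the paper's argument almost verbatim: the same implicit-function-theorem setup with the map $\Psi(u,v,p,w)$, the same identification of $\partial_V\Psi$ as a bounded, time-continuous zero-order perturbation of $\partial_t-\widetilde\Delta$, and the same conclusion that this perturbation is an isomorphism from $\mr^s_0(J;\domVecLap,\vecL^p)$ onto $L^s(J;\vecL^p)$ (the paper invokes the perturbation result~\cite[Prop.~1.3]{arendt1} directly rather than rerunning Theorem~\ref{t-pruessschnaub}, which is slightly more economical since hypothesis~(H2) of the latter would itself require knowing that the frozen perturbed operators retain maximal regularity). Where you genuinely diverge is part~i): you run a Banach fixed-point iteration directly in $\mr^s_0(\iv{0,S_0};\domVecLap,\vecL^p)$ and then time-step, whereas the paper first obtains a unique global \emph{mild} solution in $C(\overline J;\vecL^p)$ from the classical semilinear theory (\cite[Ch.~6, Thm.~1.2]{pazy}, using exactly the global Lipschitz property furnished by the cut-off $\eta$), and only afterwards upgrades it in one shot: the right-hand side evaluated along the mild solution lies in $L^\infty(J;\vecL^p)$, so maximal parabolic regularity of $-\widetilde\Delta$ produces a strong solution, which by uniqueness of mild solutions must equal the one already in hand. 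The paper's route buys a cleaner global statement without any restarting; your route avoids the mild/strong comparison but obliges you to handle the restart at $t=S_0,2S_0,\dots$ with nonzero initial data, which then must be shown to lie in the trace space $\bigl((L^p,\domLap)_{1-\frac1s,s}\bigr)^3$ (this does follow from Lemma~\ref{l-maxparregfacts}~i), and your appeal to a split-off à la~\eqref{e-splitoff} together with Lemma~\ref{l-maxparregfacts}~ii) closes that gap, but it is a detail worth stating explicitly). Both arguments are sound; the contraction constant in yours is controlled as you say because the $L^\infty(0,S)\embeds L^s(0,S)$ factor $S^{1/s}$ beats the uniform Lipschitz constant for small $S$.
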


\begin{proof}
  \ref{t-hilfs-exist}):~For given $u \in C(\overline J;C(\overline \Omega))$, define
  $R_{i,u}\colon J \times \vecL^p \to L^p$, $i=2,3,4$ by setting $R_{i,u}(t;v,p,w):=\S_i(t;u(t),v,p,w)$.
  It is not hard to see that each $R_{i,u}$ is uniformly continuous on
  $J$ when the second argument $(v,p,w) \in
  \vecL^p$ is fixed, and globally Lipschitz continuous on $\vecL^p$ when $t \in
  J$ is fixed -- with a Lipschitz constant uniform in
  $t$. Therefore, the \emph{semilinear} parabolic system~\eqref{e-precv}--\eqref{e-precw}
  admits exactly one \emph{mild} solution $(\hat v, \hat p, \hat w) \in C(\overline
  J;\vecL^p)$ with initial value zero, cf.~\cite[Ch.~6, Thm.~1.2]{pazy}.  Since then the mapping
  \[
    J \ni t \mapsto \Bigl(R_{2,u}\bigl(t;\hat v(t), \hat p(t),\hat w(t)\bigr), R_{3,u}\bigl(t;\hat v(t), \hat p(t),\hat
    w(t)\bigr), R_{4,u}\bigl(t;\hat v(t), \hat p(t),\hat w(t)\bigr)\Bigr)
  \]
  belongs to $L^\infty(J;\vecL^p)$, maximal parabolic regularity of the operator
  \[
    -\widetilde \Delta:= \diag(-k_v \Delta,-k_p \Delta, -k_p \Delta)
  \]
  on $\vecL^p$ provides an unique solution $(\check v, \check p, \check w)$ with zero initial values of the equations
  \begin{align*} 
    v'(t)-k_v \Delta v(t) &= R_{2,u}\bigl(t;\hat v(t), \hat p(t),\hat w(t)\bigr), 
    \\ 
    p'(t)-k_p \Delta p(t) &= R_{3,u}\bigl(t;\hat v(t), \hat p(t),\hat w(t)\bigr), 
    \\ 
    w'(t)-k_w \Delta w (t) &= R_{4,u}\bigl(t;\hat v(t), \hat p(t),\hat w(t)\bigr), 
  \end{align*}
  which even belongs to the space $\mr^s_0(J;\domVecLap,\vecL^p)$.  But this solution $(\check v, \check
  p, \check
  w)$ is also a mild solution of~\eqref{e-precv}--\eqref{e-precw}, cf.~\cite[Ch.~III.1.3]{A95}. Since
  then both $(\hat v, \hat p, \hat w)$ and $(\check v, \check p, \check
  w)$ are mild solutions of~\eqref{e-precv}--\eqref{e-precw} with the same initial value, they must
  necessarily coincide. Hence, $(\hat v, \hat p, \hat w)$ belongs to $ \mr^s_0(J;\domVecLap,\vecL^p)
  $ and is the unique function to solve~\eqref{e-precv}--\eqref{e-precw}.\\
  \ref{t-hilfs-diff})~For this we apply the implicit function theorem, considering the mapping
  \[
    \Psi \colon C(\overline J; C(\overline \Omega))\times \mr^s_0(J; \domVecLap,\vecL^p) \to L^s(J;
    \vecL^p),
  \]
  which is given by
  \begin{align*}
    \Psi(u,v,p,w)(t) & =
    \Bigl (v'(t)- k_v \Delta v(t) -R_{2,u}\bigl(t; v(t),  p(t), w(t)\bigr), \\
    & \qquad p'(t)- k_p \Delta p(t) - R_{3,u}\bigl(t;  v(t), p(t), w(t)\bigr), \\
    & \qquad w'(t)- k_w \Delta w(t) - R_{4,u} \bigl(t; v(t), p(t), w(t)\bigr) \Bigr)
  \end{align*}
  Obviously, for given $u \in C(\overline J;C(\overline \Omega))$, the triple $(v,p,w) \in
  \mr^s_0(J;\domVecLap,\vecL^p)$ is a solution of~\eqref{e-precv}--\eqref{e-precw} iff $\Psi(u,v,p,w) =
  0$ in $L^s(J;\vecL^p)$. By the assumptions on $R_2,R_3$ and $R_4$,
  $\Psi$ is continuously differentiable and the partial derivative with respect to the second variable in
  a given point $\bigl (\bar u,(\bar v, \bar p, \bar w)\bigr ) \in C(\overline J;C(\overline \Omega))
  \times
  \mr^s_0(J;\domVecLap,\vecL^p)$ is the linear mapping which assigns to the triple $(h_2,h_3,h_4)\in
  \mr^s_0(J;\domVecLap,\vecL^p)$ the expression
  \begin{align}
    &\left[\left(\partial_{(2,3,4)}\Psi\right)(\bar u,\bar v,\bar p,\bar w)(h_2,h_3,h_4)\right](t)
    \notag \\ & \qquad = \left[h_2'(t)- k_v \Delta h_2(t) -
      \sum_{i=2}^4 \partial_i R_{2,u}\bigl(t;\bar v(t),\bar p(t),\bar w(t)\bigr)h_i(t)
      ,\right.\label{e-ableitg1} \\ &
    \qquad \qquad h_3'(t)- k_v \Delta h_3(t) -
    \sum_{i=2}^4 \partial_i R_{3,u}\bigl(t;\bar v(t),\bar p(t),\bar w(t)\bigr)h_i(t),
    \\ &
    \qquad \qquad \left.h_4'(t)- k_v \Delta h_4(t) -
      \sum_{i=2}^4 \partial_i R_{4,u}\bigl(t;\bar v(t),\bar p(t),\bar w(t)\bigr)h_i(t)
    \right],  \label{e-ableitg2}
  \end{align}
  which is a function from $L^s(J;\vecL^p)$. We know already that the operator
  $-\widetilde\Delta$ satisfies maximal parabolic regularity on the space
  $\vecL^p$. Moreover, it is clear that the remaining terms in front of the directions
  $h_i$ in~\eqref{e-ableitg1}--\eqref{e-ableitg2}, considered as time-dependent multipliers on the
  corresponding $L^p$-space, form \emph{bounded} operators in
  $L^s(J;\vecL^p)$, since the corresponding multipliers are bounded and continuous in space and
  time. Hence, according to a suitable perturbation theorem as in~\cite[Prop.~1.3]{arendt1}, the equation
  \[\left(\partial_{(2,3,4)}\Psi\right)(\bar u,\bar v,\bar p,\bar
    w)(h_2,h_3,h_4) = \mathfrak f\] is uniquely solvable for every $\mathfrak f \in
  L^s(J;\vecL^p)$ with $(h_2,h_3,h_4)\in
  \mr^s_0(J;\domVecLap,\vecL^p)$. This means that the partial derivative
  $\left(\partial_{(2,3,4)}\Psi\right)(\bar u,\bar v,\bar p,\bar
  w)$ is a topological isomorphism between $\mr^s_0(J;\domVecLap,\vecL^p)$ and
  $L^s(J;\vecL^p)$, what makes the implicit function theorem applicable. Considering $\Psi(\bar u,\bar
  v,\bar p,\bar w) = 0$ and $(\bar u,\bar v,\bar p,\bar w) = (\bar u,\sol(\bar
  u))$, we thus obtain that the implicit function defined on a neighborhood of $\bar
  u$, whose existence is guaranteed by the implicit function theorem, coincides with
  $\sol$ on that neighborhood and is continuously differentiable. Since this is true for \emph{every}
  function $\bar u \in C(\overline J;C(\overline \Omega))$, the ``solution operator''
  $\sol$ is continuously differentiable on that space.
\end{proof}

\begin{rem} \label{r-uniform} In addition to the results of Theorem~\ref{t-hilfs}, the above
  considerations make it clear that the set of solutions $\{ \sol(u) \colon u \in
  \mathfrak{B}\}$ which corresponds to a \emph{bounded} subset $\mathfrak{B}$ of $C(\overline
  J;C(\overline
  \Omega))$ in turn forms a \emph{bounded} subset in the space
  $\mr^s_0(J;\domVecLap,\vecL^p)$, and, hence, a precompact one in $C_0(\overline J;C(\overline
  \Omega)^3)$, cf.\ Lemma~\ref{l-embeddd}. This can be seen by observing that the real functions
  $R_{i,u}$, $i =
  2,3,4$, acting as right hand sides in~\eqref{e-precv}--\eqref{e-precw} are uniformly bounded in
  $L^s(J;\vecL^p)$ in the following way: We set
  \begin{equation*}
    M_R := \max_i M_{i,R} < \infty, \quad \text{where} \quad M_{i,R} := \sup_{\substack{|\bar u| \leq M_{\mathfrak B},\\
        |\bar v| \vee |\bar p| \vee |\bar w| \leq M+1}} \bigl|R_i(\bar u,\bar v,\bar p,\bar w)\bigr|,
  \end{equation*}
  using $M_{\mathfrak B} := \max_{u \in \mathfrak B} \|u\|_{C(\overline J;C(\overline\Omega))}$. Then
  \begin{equation*}
    \max_{i}\,\bigl\|R_{i,u}\bigl(\cdot\,;v(\cdot),p(\cdot),w(\cdot)\bigr)\bigr\|_{L^\infty(J;\vecL^p)}  \leq |\Omega|^{\frac1p} M_R
  \end{equation*}
  for \emph{all} $u \in \mathfrak B$ and $(v,p,w) \in
  \mr_0^s(J;\domVecLap,\vecL^p))$, which by the maximal parabolic regularity
  estimate~\eqref{eq:MPRestimate} shows that $\{ \sol(u) \colon u \in
  \mathfrak{B}\}$ forms a bounded set in the space $\mr^s_0(J;\domVecLap,\vecL^p)$.
\end{rem}

Out next intention is to show that the mapping $\sol$ is Lipschitzian on bounded subsets of
$\mr^r(J;\domLap,L^p)$.

\begin{corollary} \label{c-Lipschitzian} Let $\BB$ be any bounded subset of
  $\mr^r(J;\dom_{ L^p}(\Delta), L^p)$. Then the mapping $\sol$ is Lipschitzian as a mapping from $\BB$
  into $\mr^s_0(J;\domVecLap,\vecL^p)$, and hence, also into $C(\overline J;\mathbb W^{1,q})$.
\end{corollary}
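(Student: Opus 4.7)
The plan is to exploit the $C^1$-regularity of $\sol \colon C(\overline J;C(\overline \Omega)) \to \mr^s_0(J;\domVecLap,\vecL^p)$ from Theorem~\ref{t-hilfs}~\ref{t-hilfs-diff}) together with the compact embedding $\mr^r(J;\domLap,L^p) \embeds C(\overline J;C(\overline \Omega))$ from Lemma~\ref{l-embeddd}~i). Concretely, since the embedding is continuous, it suffices to prove
\[
  \|\sol(u_1)-\sol(u_2)\|_{\mr^s_0(J;\domVecLap,\vecL^p)} \le L\,\|u_1-u_2\|_{C(\overline J;C(\overline \Omega))}
\]
for $u_1,u_2 \in \BB$ with a constant $L$ depending only on $\BB$, and then absorb the embedding constant $\mathcal{E}(\mr^r(J;\domLap,L^p);C(\overline J;C(\overline \Omega)))$ into the Lipschitz constant to obtain the desired estimate in the $\mr^r$-norm.

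First, I would enlarge $\BB$ to the convex set
\[
  \widetilde{\BB} := \bigl\{(1-t)u_1 + t u_2 \colon u_1,u_2 \in \BB,\ t\in[0,1]\bigr\},
\]
which is still bounded in $\mr^r(J;\domLap,L^p)$ since $\BB$ is. Invoking the compact embedding in Lemma~\ref{l-embeddd}~i), $\widetilde \BB$ is thus relatively compact in $C(\overline J;C(\overline \Omega))$. Because $\sol$ is continuously differentiable by Theorem~\ref{t-hilfs}~\ref{t-hilfs-diff}), the map $u \mapsto \|\sol'(u)\|_{\LL(C(\overline J;C(\overline \Omega));\mr^s_0(J;\domVecLap,\vecL^p))}$ is continuous on $C(\overline J;C(\overline \Omega))$ and hence bounded on the compact closure of $\widetilde \BB$; call the resulting bound $L_0$.

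Then, for any $u_1,u_2 \in \BB$, the segment $t \mapsto (1-t)u_1 + tu_2$ lies in $\widetilde \BB$, and the fundamental theorem of calculus (applied in the Banach space $\mr^s_0(J;\domVecLap,\vecL^p)$) yields
\[
  \sol(u_2) - \sol(u_1) = \int_0^1 \sol'\bigl((1-t)u_1 + tu_2\bigr)(u_2-u_1)\dd t,
\]
so that
\[
  \|\sol(u_2)-\sol(u_1)\|_{\mr^s_0(J;\domVecLap,\vecL^p)} \le L_0\,\|u_2-u_1\|_{C(\overline J;C(\overline \Omega))}.
\]
Composing with the embedding from Lemma~\ref{l-embeddd}~i) produces the Lipschitz estimate with respect to the $\mr^r(J;\domLap,L^p)$-norm on $\BB$, while the ``hence'' part follows by postcomposition with the continuous embedding $\mr^s_0(J;\domVecLap,\vecL^p) \embeds C(\overline J;\mathbb W^{1,q})$ from Lemma~\ref{l-embeddd}~ii).

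I do not anticipate a serious obstacle here: the only subtlety is that $\sol$ is differentiable with respect to the $C(\overline J;C(\overline \Omega))$-topology while we want Lipschitzianity with respect to the stronger $\mr^r$-topology on $\BB$, and the boundedness of $\sol'$ on $\widetilde \BB$ is \emph{a priori} only local along the segments in $C(\overline J;C(\overline \Omega))$; both points are resolved exactly by the compactness of the embedding $\mr^r(J;\domLap,L^p) \embeds C(\overline J;C(\overline \Omega))$ that turns bounded sets into precompact (and hence, after enlargement to $\widetilde \BB$, uniformly controlled) subsets.
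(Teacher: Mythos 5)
Your proposal is correct and follows essentially the same route as the paper: the paper also reduces to a convex bounded set (a ball, WLOG), uses the compactness of the embedding from Lemma~\ref{l-embeddd} to get a compact convex subset of $C(\overline J;C(\overline \Omega))$ on which the derivative of $\sol$ from Theorem~\ref{t-hilfs} is bounded, and concludes by the mean value theorem along segments, with the $C(\overline J;\mathbb W^{1,q})$ statement following from the embedding of the maximal regularity space. Your explicit handling of the two topologies (differentiability in the $C(\overline J;C(\overline \Omega))$-topology versus Lipschitzianity in the $\mr^r$-norm) is a welcome clarification of a point the paper leaves implicit.
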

\begin{proof}
  Without loss of generality we may assume that $\BB$ is a---sufficiently large---ball.  Any bounded
  subset $\BB$ of $\mr^s_0(J;\domVecLap,\vecL^p)$ forms a precompact subset of
  $C(\overline J;C(\overline \Omega))$, according to Lemma~\ref{l-embeddd}.  Accordingly, its closure
  $\overline {\BB}$ in $C(\overline J;C(\overline \Omega))$ forms a compact set in this space which is
  convex, too. Now Theorem~\ref{t-hilfs}~\eqref{t-hilfs-diff} tells us that the derivative of
  $\mathcal S$ is bounded on $\overline {\BB}$. Since this set contains with any two points also the
  segment between them, an application of the mean value theorem gives the first claim.  Finally, the
  assertion for $C(\overline J;\mathbb W^{1,q})$ is obtained from the previous one via
  Lemma~\ref{l-embeddd}.
\end{proof}

Having introduced the solution operator $\sol$ for~\eqref{e-precv}--\eqref{e-precw}, we now turn back to
Theorem~\ref{l-modisystem}. Inserting $\sol(u)$ with $u \in \mr^r(J;\dom_{L^p}(\Delta), L^p)$ for
$(v,p,w)$ in~\eqref{e-precu}, one obtains a self-consistent equation for $u$ alone together with the
initial value condition $u(0) = u_0$. This equation can be solved via Theorem~\ref{t-Amann}, as we will
show below. Afterwards, having the solution $\bar u$ at hand, the functions $(\bar v, \bar p, \bar w)$
are determined via Lemma~\ref{t-hilfs} or $\sol(\bar u)$, from which they
satisfy~\eqref{e-precv}--\eqref{e-precw} automatically by construction. The quality of the whole solution
of~\eqref{e-precu}--\eqref{e-precw} is then $\bar u \in \mr^r(J;\dom_{L^p}(\Delta), L^p)$ and
$(\bar v,\bar p,\bar w) \in \mr_0^s(I;\dom_{\vecL^p}(\Delta),\vecL^p)$.

We have formulated the next big step---the application of Theorem~\ref{t-Amann}---as a theorem on its
own. For this, let $\sol_1$ denote the $v$-component of $\sol$, $\sol_2$ the $p$-component of $\sol$, and
$\sol_3$ the $w$-component of $\sol$.

\begin{theorem} \label{l-loes0} Suppose $(u_0,v_0,p_0,w_0) \in \mathrm{IV}(r,s)$. 
  Then there exists a maximal interval $I_\bullet= \iv{0,S_\bullet} \subseteq J$ such that the equation
  \begin{multline}
    u'(t)+ A\Bigl(\afo\bigl(u(t) ,v_{\mathcal{I}}(t) + \sol_1(u)(t)\bigr)\Bigr) u(t) \\ =
    A\Bigl(\bfo\bigl(u(t),v_{\mathcal{I}}(t) + \sol_1(u)(t)\bigr)\Bigr)\bigl(v_{\mathcal{I}}(t) + \sol_1(u)(t)\bigr)
    + \S_1\bigl(t;u(t),\sol(u)(t)\bigr),
    \label{e-0u}
  \end{multline}
  has a unique solution $u \in \mr^r(I;\dom_{L^p}(\Delta), L^p)$ with initial value $u(0) = u_0$ on every
  subinterval $I = \iv{0,S} \subset I_\bullet$.
\end{theorem}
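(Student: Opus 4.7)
The plan is to apply Amann's theorem (Theorem~\ref{t-Amann}) with $X = L^p$, $Y = \domLap$, non-autonomous operator $\mathcal{A}(u)(t) := A\bigl(\afo(u(t), v_{\mathcal{I}}(t) + \sol_1(u)(t))\bigr)$, and right-hand side
\[
  F(u)(t) := A\bigl(\bfo(u(t), v_{\mathcal{I}}(t) + \sol_1(u)(t))\bigr)\bigl(v_{\mathcal{I}}(t) + \sol_1(u)(t)\bigr) + \S_1\bigl(t; u(t), \sol(u)(t)\bigr).
\]
The first step is to show that for $u$ ranging in any bounded $\BB \subset \mr^r(J;\domLap,L^p)$, the coefficient $\afo(u, v_{\mathcal{I}} + \sol_1(u))$ lies in a bounded subset of $C(\overline J;W^{1,q})$ with a uniform strictly positive lower bound. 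This follows from Lemma~\ref{l-embeddd} (uniform boundedness of $u$ in $C(\overline J;W^{1,q})$), the analogous bound for $\sol_1(u)$ via Corollary~\ref{c-Lipschitzian} and Remark~\ref{r-uniform}, the regularity of $v_{\mathcal I}$ from Lemma~\ref{l-innisplit}, together with the $C^2$-regularity and strict positivity of $\afo$ and the Banach algebra property of $W^{1,q}$ for $q > d$. The second part of Lemma~\ref{l-domAIN} then yields the constant domain $\domLap$ for every $\mathcal{A}(u)(t)$ with uniformly equivalent norms, so $\mathcal{A}(u) \in L^\infty(J;\LL(\domLap;L^p))$; the analogous reasoning applies to $\bfo(u, v_{\mathcal{I}} + \sol_1(u))$, where now the first part of Lemma~\ref{l-domAIN} suffices, since $\bfo$ need not be positive but its operator is only required to map $\domLap$ into $L^p$.

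Next I would verify the remaining Amann hypotheses in order. Lipschitz continuity of $\mathcal{A}$ on bounded subsets follows by composing the Lipschitz property of $\sol$ from Corollary~\ref{c-Lipschitzian}, the smooth Nemytskii-type map $(u,v) \mapsto \afo(u,v)$ into $W^{1,q}$, and the map $\omega \mapsto -\nabla\cdot\omega\nabla$ from Corollary~\ref{c-0stetfu}. The non-autonomous maximal parabolic $L^r$-regularity of $\mathcal{A}(u)$ then follows from Theorem~\ref{t-pruessschnaub}: continuity of $t \mapsto \mathcal{A}(u)(t)$ in $\LL(\domLap;L^p)$ was just established (H\"older continuity is in fact available via $\mr^r \embeds C^\alpha(J;W^{1,q})$ from Lemma~\ref{l-embeddd}); each operator generates an analytic $L^p$-semigroup with common domain $\domLap$ (Proposition~\ref{prop:collection-Ap} and Lemma~\ref{l-domAIN}); and at each frozen $t$ the autonomous maximal $L^r$-regularity holds by Theorem~\ref{t-unsreal}. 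Lipschitz continuity of $F \colon \mr^r(J;\domLap,L^p) \to L^s(J;L^p)$ splits into the $A(\bfo)(v_{\mathcal I}+\sol_1(u))$-part, handled by the same coefficient-Lipschitz chain combined with boundedness of $v_{\mathcal I}+\sol_1(u)$ in $L^s(J;\domLap)$, and the $\S_1$-part, a Nemytskii operator covered by the $C^2$-regularity of $R_1$ together with $\mr^r, \mr^s \embeds C(\overline J;C(\overline\Omega))$ from Lemma~\ref{l-embeddd}. The Volterra property is inherited from the causality of $\sol$, which is immediate from uniqueness in Theorem~\ref{t-hilfs}. The initial value requirement $u_0 \in (L^p,\domLap)_{1-\frac1r,r}$ is built into the definition of $\mathrm{IV}(r,s)$.

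The main obstacle, in my view, is the bookkeeping behind the Lipschitz verifications for $\mathcal{A}$ and the first summand of $F$ on bounded subsets of $\mr^r(J;\domLap,L^p)$: one must track constants through the chain $u \mapsto (u, v_{\mathcal I} + \sol_1(u)) \mapsto \afo(u, v_{\mathcal I} + \sol_1(u)) \mapsto A(\afo(\cdots))$ with uniformity over $\BB$ at each link, knowing that $\sol_1(u)$ itself depends nonlocally in time on $u$ through the semilinear system~\eqref{e-precv}--\eqref{e-precw}. All three links are provided by tools already in the paper, but their combination has to be executed carefully so that no constant blows up. Once the five Amann hypotheses are in place, Theorem~\ref{t-Amann} immediately delivers the maximal interval $I_\bullet \subseteq J$ together with the unique solution $u \in \mr^r(I;\domLap,L^p)$ on every subinterval $I = \iv{0,S} \subset I_\bullet$ satisfying $u(0) = u_0$, which is the claim.
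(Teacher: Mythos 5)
Your proposal is correct and follows essentially the same route as the paper: the paper's proof of Theorem~\ref{l-loes0} consists precisely of collecting Lemma~\ref{l-Lipschitzconty} (well-definedness, Lipschitz continuity and the isomorphism property of $\mathcal A$ via Lemma~\ref{l-domAIN}, Corollary~\ref{c-0stetfu} and Theorem~\ref{t-pruessschnaub}), Lemma~\ref{l-F} (the decomposition of the right-hand side into the three Lipschitz pieces you describe) and Lemma~\ref{l-volt} (the Volterra property), and then invoking Theorem~\ref{t-Amann}. Your identification of the roles of the two parts of Lemma~\ref{l-domAIN} for $\afo$ versus $\bfo$, and of Corollary~\ref{c-Lipschitzian} together with Remark~\ref{r-uniform} for the uniformity over bounded sets, matches the paper's argument exactly.
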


In order to validate the suppositions in Theorem~\ref{t-Amann}, we will formulate some lemmata:

\begin{lemma} \label{l-Lii;ps} Let $\cfo \colon \R^2 \to \R$ be twice continuously differentiable.
  Then the superposition operator $(\psi, \varphi) \to \cfo(\psi(\cdot),\varphi(\cdot))$ induced by
  $\cfo$ is well defined and Lipschitzian on bounded sets when considered as an operator from
  $W^{1,q} \times W^{1,q}$ into $W^{1,q}$.
\end{lemma}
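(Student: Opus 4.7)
The cornerstone is the Sobolev embedding $W^{1,q} \embeds C(\overline\Omega)$, valid since $q > d$ by Assumption~\ref{a-reg}. This reduces the lemma to a more or less pointwise analysis: any bounded set $\BB \subset W^{1,q} \times W^{1,q}$ is carried by the evaluation map into a relatively compact subset $K \subset \R^2$. On (the convex hull of) $K$, the function $\cfo$ and its partial derivatives $\partial_i \cfo$ ($i=1,2$) are uniformly bounded by some constant $M$, and, since $\cfo \in C^2(\R^2)$, the partials are Lipschitz continuous with some constant $L$.

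For well-definedness I would invoke the classical chain rule for Sobolev compositions with a $C^1$-map to obtain, almost everywhere on $\Omega$,
\begin{equation*}
\nabla \cfo(\psi,\varphi) = \partial_1\cfo(\psi,\varphi)\,\nabla\psi + \partial_2\cfo(\psi,\varphi)\,\nabla\varphi,
\end{equation*}
whence $\|\nabla\cfo(\psi,\varphi)\|_{L^q} \leq M\bigl(\|\nabla\psi\|_{L^q} + \|\nabla\varphi\|_{L^q}\bigr)$. Combined with the immediate estimate $\|\cfo(\psi,\varphi)\|_{L^\infty} \leq M$, this yields $\cfo(\psi,\varphi) \in W^{1,q}$ with norm controlled by the $W^{1,q}$-norms of $\psi,\varphi$.

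For the Lipschitz property on $\BB$, I would take $(\psi_j,\varphi_j) \in \BB$ for $j=1,2$ and decompose $\nabla\cfo(\psi_1,\varphi_1) - \nabla\cfo(\psi_2,\varphi_2)$ via the usual add-and-subtract trick into two kinds of contributions: terms of the form $[\partial_i\cfo(\psi_1,\varphi_1) - \partial_i\cfo(\psi_2,\varphi_2)]\,\nabla\psi_1$, handled by the Lipschitz bound $L$ on $\partial_i \cfo$ on $K$ together with $\|fg\|_{L^q} \leq \|f\|_{L^\infty}\|g\|_{L^q}$ and the uniform bound $\|\nabla\psi_1\|_{L^q} \leq R$ on $\BB$; and terms $\partial_i\cfo(\psi_2,\varphi_2)\bigl(\nabla\psi_1 - \nabla\psi_2\bigr)$, handled by the uniform bound $M$. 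The $L^\infty$-differences appearing in the first type are absorbed by $W^{1,q}$-differences through the embedding $W^{1,q} \embeds C(\overline\Omega)$. An entirely analogous, in fact simpler, estimate treats $\|\cfo(\psi_1,\varphi_1) - \cfo(\psi_2,\varphi_2)\|_{L^q}$.

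No step presents any real difficulty; the $C^2$-hypothesis on $\cfo$ enters precisely through the local Lipschitz continuity of its first derivatives, which is indispensable for the gradient estimate. The only point requiring mild justification is the Sobolev chain rule for compositions with $C^1$-nonlinearities (Marcus--Mizel), which applies because $\psi,\varphi$ are continuous and bounded by virtue of $W^{1,q} \embeds C(\overline\Omega)$.
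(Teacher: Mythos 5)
Your proposal is correct, and the core estimate is the same one the paper has in mind: the add-and-subtract decomposition of $\nabla\cfo(\psi_1,\varphi_1)-\nabla\cfo(\psi_2,\varphi_2)$, with the $C^2$-hypothesis entering through the local Lipschitz continuity of $\partial_i\cfo$ on the compact set in $\R^2$ determined by the embedding $W^{1,q}\embeds C(\overline\Omega)$, and with $L^\infty$-differences absorbed into $W^{1,q}$-differences via that same embedding. Where you diverge from the paper is in how the chain rule is legitimized: the paper sidesteps the Sobolev chain rule entirely by first proving the Lipschitz estimate only for $\psi,\varphi\in\BB\cap C^\infty(\Omega)$ (where the computation is classical) and then extending the superposition operator to all of $\BB\times\BB$ by density and uniform continuity, keeping the same constant. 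You instead invoke the Marcus--Mizel chain rule for compositions of $W^{1,q}$-functions with $C^1$-nonlinearities directly, which is applicable here precisely because $q>d$ makes $\psi,\varphi$ continuous and bounded. Your route is slightly more self-contained in its logic --- it avoids the (fixable but unaddressed) question of whether $\BB\cap C^\infty(\Omega)$ is dense in $\BB$ and why the continuous extension coincides with the actual superposition operator --- at the cost of citing a less elementary result; the paper's route needs only the classical chain rule but leans on the density argument. Both are sound.
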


\begin{proof}
  Let $\BB$ be a bounded set in $W^{1,q}$ and assume firstly that
  $\psi , \varphi \in \BB \cap C^\infty(\Omega)$.  Taking into account that $\BB$ forms a bounded subset
  of $C(\overline \Omega)$, a straight forward calculation shows the existence of a constant
  $c=c(\BB,\cfo)$ such that
  \begin{equation} \label{e-lipschitt} \bigl\|\cfo(\psi_1,\varphi_1) -\cfo(\psi_2,\varphi_2)\bigr\|_{W^{1,q}} \le c
    \bigl(\|\psi_1 -\psi _2\|_{W^{1,q}} + \|\varphi_1 -\varphi _2\|_{W^{1,q}}\bigr),
  \end{equation}
  holds for all $ \psi , \varphi \in \BB \cap C^\infty(\Omega)$ . Thus, the superposition operator
  induced by $\cfo$ is defined on a dense subset of $\BB \times \BB \subset W^{1,q}\times W^{1,q}$ and is
  uniformly continuous in $W^{1,q}$ w.r.t.\ the $W^{1,q} \times W^{1,q}$-topology. Hence, it can be
  extended to all of $\BB \times \BB$, with the same estimate as in~\eqref{e-lipschitt}.
\end{proof}

We immediately obtain the following extension from the preceding lemma.

\begin{corollary} \label{c-lipsdchd} Let $\cfo \colon \R^2 \to \R$ be twice continuously
  differentiable. In the spirit of Lemma~\ref{l-Lii;ps}, $\cfo$ induces a superposition operator
  $C(\overline J;W^{1,q}) \times C(\overline J;W^{1,q}) \to C(\overline J;W^{1,q})$ via
  \begin{equation*}
    C\bigl(\overline J;W^{1,q}\bigr) \times C\bigl(\overline J;W^{1,q}\bigr) \ni (\psi,\varphi) \mapsto
    \Bigl[t\mapsto \cfo\bigl(\psi(t),\varphi(t)\bigr)\Bigr] \in C\bigl(\overline J;W^{1,q}\bigr),
  \end{equation*}
  and this mapping is also Lipschitzian on bounded sets.
\end{corollary}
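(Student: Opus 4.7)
The plan is to lift the static Lipschitz estimate of Lemma~\ref{l-Lii;ps} pointwise to the time-parametrized setting, using that continuous $W^{1,q}$-valued functions on the compact interval $\overline J$ have bounded range. There are really only two points to verify: that $t \mapsto \cfo(\psi(t),\varphi(t))$ actually lies in $C(\overline J;W^{1,q})$, i.e., is continuous as a $W^{1,q}$-valued map, and that the induced operator has the claimed Lipschitz property on bounded sets.

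For the first point, I would fix $\psi,\varphi \in C(\overline J;W^{1,q})$. Since $\overline J$ is compact, the ranges $\psi(\overline J)$ and $\varphi(\overline J)$ are compact, hence bounded, subsets of $W^{1,q}$; enclose both in a single bounded set $\BB \subset W^{1,q}$. Applying Lemma~\ref{l-Lii;ps} pointwise with this $\BB$ yields, for all $s,t \in \overline J$,
\begin{equation*}
\bigl\|\cfo(\psi(t),\varphi(t)) - \cfo(\psi(s),\varphi(s))\bigr\|_{W^{1,q}}
\leq c(\BB,\cfo)\bigl(\|\psi(t)-\psi(s)\|_{W^{1,q}} + \|\varphi(t)-\varphi(s)\|_{W^{1,q}}\bigr),
\end{equation*}
so continuity of $\psi$ and $\varphi$ into $W^{1,q}$ forces the left-hand side to $0$ as $t\to s$. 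This establishes the mapping $C(\overline J;W^{1,q})\times C(\overline J;W^{1,q}) \to C(\overline J;W^{1,q})$.

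For the Lipschitz bound, let $\mathfrak{M} \subset C(\overline J;W^{1,q})\times C(\overline J;W^{1,q})$ be bounded. Then the union of all pointwise values of $\psi$ and $\varphi$ for $(\psi,\varphi) \in \mathfrak{M}$ forms a bounded set $\BB$ in $W^{1,q}$, with bound depending only on the bound defining $\mathfrak{M}$. For any two pairs $(\psi_1,\varphi_1), (\psi_2,\varphi_2) \in \mathfrak{M}$, Lemma~\ref{l-Lii;ps} applied at each fixed $t\in\overline J$ gives
\begin{equation*}
\bigl\|\cfo(\psi_1(t),\varphi_1(t)) - \cfo(\psi_2(t),\varphi_2(t))\bigr\|_{W^{1,q}}
\leq c(\BB,\cfo)\bigl(\|\psi_1(t)-\psi_2(t)\|_{W^{1,q}} + \|\varphi_1(t)-\varphi_2(t)\|_{W^{1,q}}\bigr),
\end{equation*}
and taking the supremum over $t\in\overline J$ delivers the Lipschitz estimate in the $C(\overline J;W^{1,q})$-norm with constant $c(\BB,\cfo)$ inherited from the static case.

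There is no real obstacle here beyond being careful that the constant in Lemma~\ref{l-Lii;ps} depends only on the bounded set $\BB$ (and on $\cfo$ together with its derivatives on the $C(\overline\Omega)$-bounded envelope of $\BB$), not on the particular pair $(\psi,\varphi)$. Since the bounded envelope is uniform in $t$, the pointwise constants can be absorbed into a single constant, which is exactly what is needed to pass from pointwise estimates to a supremum-norm estimate.
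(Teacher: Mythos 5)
Your proposal is correct and matches the paper's intent: the paper gives no written proof, declaring the corollary an immediate consequence of Lemma~\ref{l-Lii;ps}, and the argument it has in mind is precisely yours --- apply the static Lipschitz estimate pointwise in $t$ on a bounded set $\BB$ containing all pointwise values, which gives both continuity of $t\mapsto\cfo(\psi(t),\varphi(t))$ and, after taking the supremum over $t\in\overline J$, the Lipschitz bound in the $C(\overline J;W^{1,q})$-norm.
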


The next lemma covers the differential operators occurring in~\eqref{e-precu}.

\begin{lemma} \label{l-Lipschitzconty} Let $\cfo \colon \R^2 \to \R$ be twice continuously
  differentiable.
  \begin{enumerate}[i)]
  \item The operator
    \begin{equation} \label{e-mathcalA} \mathcal A(u)(t) := A\Bigl(\cfo \bigl(u(t),v_{\mathcal{I}}(t) +
      \sol_1(u)(t)\bigr)\Bigr)
    \end{equation}
    defines a mapping
    \begin{equation*} 
      \mathcal
      A\colon\mr^r(J;\domLap,L^p) \to C\bigl(\overline
      J;\LL(\domLap;L^p)\bigr).
    \end{equation*}
    Moreover, $\mathcal A$ is Lipschitzian on bounded subsets of $\mr^r(J;\domLap,L^p)$.
  \item If, additionally, $\cfo$ is a strictly positive function, then $\mathcal A(u)|_I$ provides a
    topological isomorphism between $\mr_0^r(I;\domLap, L^p)$ and $L^r(I;L^p)$ for every subinterval
    $I = \iv{0,S} \subseteq J$ and every $u\in\mr^r(J;\domLap,L^p)$. In particular, $\mathcal A$
    satisfies assumptions~i) and~ii) in Theorem~\ref{t-Amann} for the spaces $X = L^p$ and $Y = \domLap$
    in this case.
  \end{enumerate}
\end{lemma}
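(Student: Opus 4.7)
The plan is to treat parts i) and ii) separately, with part i) handled by realising $\mathcal{A}$ as a composition of maps already known to be Lipschitz on bounded sets, and part ii) handled by invoking the non-autonomous transfer result Theorem~\ref{t-pruessschnaub}.

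For part i), I would write $\mathcal{A} = \Theta \circ \Phi \circ \Lambda$, where $\Lambda(u) := (u, v_{\mathcal{I}} + \sol_1(u))$, $\Phi$ is the superposition operator induced by $\cfo$ from $C(\overline J;W^{1,q}) \times C(\overline J;W^{1,q})$ into $C(\overline J;W^{1,q})$, and $\Theta$ is the map $\omega \mapsto A(\omega(\cdot))$ from $C(\overline J;W^{1,q})$ into $C(\overline J;\LL(\domLap;L^p))$. That $\Lambda$ is Lipschitz on bounded sets follows from the (linear, continuous) embedding $\mr^r(J;\domLap,L^p) \embeds C(\overline J;W^{1,q})$ in Lemma~\ref{l-embeddd} together with Corollary~\ref{c-Lipschitzian} for $\sol_1$, while $v_{\mathcal{I}} \in C(\overline J;W^{1,q})$ by Lemma~\ref{l-innisplit}. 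Corollary~\ref{c-lipsdchd} gives the Lipschitz-on-bounded-sets property of $\Phi$, and Corollary~\ref{c-0stetfu} gives it for $\Theta$. The composition then yields part~i).

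For part ii), fix $\bar u \in \mr^r(J;\domLap,L^p)$ and set $\mu(t,\cdot) := \cfo\bigl(\bar u(t,\cdot), v_{\mathcal{I}}(t,\cdot) + \sol_1(\bar u)(t,\cdot)\bigr)$. By part~i), $\mu \in C(\overline J;W^{1,q})$. Since $\bar u$ and $v_{\mathcal{I}} + \sol_1(\bar u)$ are continuous into $C(\overline\Omega)$ (again Lemma~\ref{l-embeddd}), their images lie in a compact subset of $\R^2$, so strict positivity and continuity of $\cfo$ yield a uniform positive lower bound $\mu(t,\x) \geq c > 0$ on $\overline J \times \overline\Omega$. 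Lemma~\ref{l-domAIN}~ii), applied pointwise in $t$, then gives the Banach space identity $\dom\bigl(A(\mu(t))\bigr) = \domLap$ for every $t \in \overline J$, so $\{\mathcal A(\bar u)(t)\}_{t \in \overline J}$ is a family of operators with common dense domain $\domLap$, depending continuously on $t$ in $\LL(\domLap;L^p)$ by part~i). Proposition~\ref{prop:collection-Ap} provides that each $\mathcal A(\bar u)(\tau)$ generates an analytic semigroup on $L^p$, and Theorem~\ref{t-unsreal} provides that each has maximal parabolic $L^r$-regularity on $L^p$. Theorem~\ref{t-pruessschnaub} thus applies and yields non-autonomous maximal parabolic $L^r(I;\domLap,L^p)$-regularity on every subinterval $I = \iv{0,S} \subseteq J$, which by the open mapping theorem is exactly the asserted isomorphism between $\mr_0^r(I;\domLap,L^p)$ and $L^r(I;L^p)$. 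The two hypotheses of Theorem~\ref{t-Amann} are then immediate from part~i) and from this isomorphism.

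I expect the one non-routine step to be establishing the uniform positive lower bound of the coefficient $\mu$, since this is precisely the ingredient which unlocks Lemma~\ref{l-domAIN}~ii) and thereby the identification of $\domLap$ as a common domain indispensable for Theorem~\ref{t-pruessschnaub}. Once compactness of the range of $(\bar u, v_{\mathcal{I}} + \sol_1(\bar u))$ in $\R^2$ is extracted from Lemma~\ref{l-embeddd} (combined with Lemma~\ref{l-innisplit} and Corollary~\ref{c-Lipschitzian}), the remainder of the argument is bookkeeping among previously established tools.
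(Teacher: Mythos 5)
Your proposal is correct and follows essentially the same route as the paper: part~i) as the composition of the embedding from Lemma~\ref{l-embeddd}, the superposition result of Corollary~\ref{c-lipsdchd} together with Corollary~\ref{c-Lipschitzian}, and Corollary~\ref{c-0stetfu}; part~ii) via the uniform positive bounds on the coefficient, fixed-time maximal parabolic regularity (Theorem~\ref{t-unsere}), and the transfer result Theorem~\ref{t-pruessschnaub}. Your explicit extraction of the uniform lower bound from compactness of the range in $\R^2$ and the appeal to Lemma~\ref{l-domAIN}~ii) for the common domain are details the paper leaves implicit, but they do not constitute a different argument.
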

\begin{proof}
  i)~According to Lemma~\ref{l-embeddd}, both spaces $\mr^r(J;\domLap,L^p)$ and 
  $\mr_0^s(\domLap,L^p)$ each embed continuously into $C(\overline J;W^{1,q})$.
  Hence, both $u$ and $\sol_1(u)$ are from $C(\overline J;W^{1,q})$, cf.\ Theorem~\ref{t-hilfs}. Due to
  to Lemma~\ref{l-innisplit} and~\eqref{e-IVreg}, this is also true for the function
  $v_{\mathcal{I}}(\cdot)$.  Thanks to Corollary~\ref{c-lipsdchd}, then the function
  $\cfo \bigl(u(\cdot),v_{\mathcal{I}}(\cdot) +\sol_1(u)(\cdot)\bigr)$ is also from
  $C(\overline J;W^{1,q})$. This allows to apply Corollary~\ref{c-0stetfu}, which shows that $\mathcal A$
  as given in~\eqref{e-mathcalA}, is well-defined as a mapping into the space
  $C(\overline J;\LL(\domLap;L^p))$.

  Let us further show the Lipschitz continuity of $\mathcal A$ on bounded subsets of the space
  $\mr^r(J;\domLap,L^p)$. Combining Corollary~\ref{c-Lipschitzian} and Lemma~\ref{l-Lii;ps} shows that
  the mapping
  \[
    \mr^r(J;\domLap,L^p) \ni u \mapsto \cfo \bigl(u(\cdot),v_{\mathcal{I}}(\cdot) +
    \sol_1(u)(\cdot)\bigr) \in C(\overline J;W^{1,q})
  \]
  is well-defined and Lipschitzian on bounded subset of $\mr^r(J;\domLap,L^p)$. Now it remains to apply
  Corollary~\ref{c-0stetfu}.

  ii)~Clearly, assumption~i) of Theorem~\ref{t-Amann} is already covered by the first assertion in this
  lemma. Let $u$ be a fixed function from $\mr^r(J;\domLap,L^p)$.  Under the positivity condition on
  $\cfo$, the functions $ \cfo\bigl (u(t),v_{\mathcal{I}}(\cdot) + \sol_1(u)(t)\bigr) \in W^{1,q}$ are
  measurable and bounded from above and below by positive constants, uniformly for all
  $t \in \overline J$. Thus, the operators $\mathcal A(u)(t)$ satisfy maximal parabolic regularity on
  $L^p$ for each fixed $t \in J$, cf.\ Theorem~\ref{t-unsere}. Moreover, $t \mapsto \mathcal A(u)(t)$
  belongs to $C(\overline I;\LL(\domLap;L^p))$ for every subinterval $I = \iv{0,S} \subseteq J$
  by~i). But then Theorem~\ref{t-pruessschnaub} tells us that the non-autonomous operator $\mathcal A(u)$
  on every such $I$ satisfies maximal parabolic $L^r(I;\domLap,L^p)$-regularity. This is exactly
  assumption~ii) in Theorem~\ref{t-Amann}.
\end{proof}

Let us now turn to the right-hand side in~\eqref{e-precu}.

\begin{lemma} \label{l-F} Define for $u \in \mr^r(J;\dom_{L^p}(\Delta),L^p)$ the following operators:
  \begin{align}
    \label{e-f1} F_1(u) & := A\Bigl(\bfo
    \bigl(u(\cdot),v_{\mathcal{I}}(\cdot) +\sol_1(u)(\cdot)\bigr)\Bigr ) v_{\mathcal{I}}(\cdot), \\
    F_2(u) & :=  A\Bigl(\bfo
    \bigl(u(\cdot),v_{\mathcal{I}}(\cdot) +\sol_1(u)(\cdot)\bigr)\Bigr ) \bigl[\sol_1( u) (\cdot)\bigr], \\
    \label{e-f3}
    F_3(u) & :=\S_1\bigl(\cdot\,;u(\cdot),\sol(u)(\cdot)\bigr).
  \end{align}
  Then $F_1, F_2$ and $F_3$ are well-defined as mappings from $\mr^r(J;\domLap,L^p)$ into $L^s(J;L^p)$
  and Lipschitzian on bounded sets.
\end{lemma}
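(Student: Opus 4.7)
The plan is to verify both well-definedness and Lipschitz continuity by reducing everything to two ingredients: the bound on $A(\rho)$ from $\domLap$ into $L^p$ in terms of $\|\rho\|_{W^{1,q}}$ (which is implicit in the estimate~\eqref{eq:domLapBound} of Lemma~\ref{l-domAIN}), and the Lipschitz properties of the coefficient maps $u \mapsto \sigma(u, v_{\mathcal{I}} + \sol_1(u))$ and $u \mapsto \sol(u)$ already established in Corollaries~\ref{c-lipsdchd} and~\ref{c-Lipschitzian}.

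For $F_1$, I first use that $v_{\mathcal{I}} \in \mr^s(J;\domLap,L^p)$ by Lemma~\ref{l-innisplit} and that the coefficient function $\rho_u(t) := \sigma(u(t), v_{\mathcal{I}}(t) + \sol_1(u)(t))$ lies in $C(\overline J;W^{1,q})$ uniformly on bounded subsets of $\mr^r(J;\domLap,L^p)$, by combining Lemma~\ref{l-embeddd} with Corollaries~\ref{c-Lipschitzian} and~\ref{c-lipsdchd}. By Lemma~\ref{l-domAIN}, $\rho_u(t) \in W^{1,q}$ implies $v_{\mathcal{I}}(t) \in \dom(A(\rho_u(t)))$ with the quantitative bound
\begin{equation*}
\bigl\|A(\rho_u(t)) v_{\mathcal{I}}(t)\bigr\|_{L^p} \le c \bigl(1 + \|\rho_u(t)\|_{W^{1,q}}\bigr)\|v_{\mathcal{I}}(t)\|_{\domLap}.
\end{equation*}
Integrating in time then places $F_1(u)$ in $L^s(J;L^p)$ since $v_{\mathcal{I}} \in L^s(J;\domLap)$ and the coefficient norm is uniformly bounded in $t$. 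The argument for $F_2$ proceeds identically upon replacing $v_{\mathcal{I}}$ by $\sol_1(u) \in L^s(J;\domLap)$, with the crucial observation that the $L^s(J;\domLap)$-norm of $\sol_1(u)$ is uniformly bounded on bounded subsets by Remark~\ref{r-uniform}.

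For $F_3$, I use that by Lemma~\ref{l-embeddd}, the maps $u \mapsto u$, $u \mapsto v_{\mathcal{I}} + \sol_1(u)$, etc., all land in $C(\overline J;C(\overline\Omega))$ and are Lipschitzian on bounded subsets as such (using Corollary~\ref{c-Lipschitzian} together with the continuity of the embedding). Since $\S_1$ is obtained by composition with $R_1$ and is $C^2$ in its arguments by Assumption~\ref{a-Koeff}, the induced superposition operator into $C(\overline J;C(\overline\Omega)) \embeds L^s(J;L^p)$ is Lipschitz on bounded sets by the standard chain-type estimate.

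For Lipschitz continuity of $F_1$ and $F_2$, the key trick is that the assignment $\rho \mapsto A(\rho)$ is \emph{linear} in $\rho$ when restricted to $\domLap$ (because both sides of the constitutive relation~\eqref{e-constitut} are linear in $\mu$), so I can split
\begin{equation*}
F_2(u) - F_2(u') = A\bigl(\rho_u - \rho_{u'}\bigr)\,\sol_1(u) + A(\rho_{u'})\bigl[\sol_1(u) - \sol_1(u')\bigr],
\end{equation*}
and a similar but simpler decomposition for $F_1$ with only the first term. Each term is then controlled by the estimate from Lemma~\ref{l-domAIN} combined with the Lipschitz bounds on $\rho_u$ and $\sol_1$; the resulting pointwise-in-$t$ inequalities integrate to $L^s$ bounds. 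The main obstacle is purely bookkeeping: keeping track of which factor provides the $L^s(J;\domLap)$-integrability and which provides the Lipschitz constant in $\|u-u'\|_{\mr^r(J;\domLap,L^p)}$ via a uniform-in-$t$ bound, while absorbing constants that depend on the bounded subset through Remark~\ref{r-uniform}.
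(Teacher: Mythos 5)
Your proposal is correct and follows essentially the same route as the paper: well-definedness via $v_{\mathcal{I}},\,\sol_1(u) \in L^s(J;\domLap)$ (Lemma~\ref{l-innisplit}, Theorem~\ref{t-hilfs}) combined with the $W^{1,q}$-regularity and Lipschitz dependence of the coefficient $\rho_u$ (Lemmata~\ref{l-embeddd},~\ref{l-Lii;ps}, Corollaries~\ref{c-lipsdchd},~\ref{c-Lipschitzian}), uniform bounds from Remark~\ref{r-uniform}, and the operator estimate underlying Lemma~\ref{l-domAIN}/Corollary~\ref{c-0stetfu}. Your explicit add-and-subtract decomposition of $F_2(u)-F_2(u')$, justified by the linearity of $\rho \mapsto A(\rho)$ and the validity of Lemma~\ref{l-domAIN}~i) without a sign condition on $\rho$, is just the written-out version of the paper's one-line ``product of Lipschitzian and bounded maps'' argument, so no genuine difference in method.
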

\begin{proof}
  We first consider $F_1$ and $F_2$. Taking $\cfo = \bfo$ in Lemma~\ref{l-Lipschitzconty}, we see that
  the operator function in~\eqref{e-mathcalA} belongs to the space
  $C(\overline J;\LL(\dom_{L^p}(\Delta);L^p))$ for every $u \in
  \mr^r(J;\dom_{L^p}(\Delta),L^p)$. Due to the supposition $v_0 \in (L^p,\domLap)_{1-\frac {1}{s},s}$,
  cf.~\eqref{e-anfang} and~\eqref{e-IVreg}, we already know that in fact
  $v_{\mathcal{I}} \in L^s(J;\domLap)$, see Lemma~\ref{l-innisplit}.  For $F_2$, we recall that
  $\sol_1(u)$ belongs to $L^s(J;\domLap)$, cf.\ Theorem~\ref{t-hilfs}. This shows that $F_1$ and $F_2$
  are well-defined.

  Let us prove the Lipschitz properties for $F_{1}$ and $F_2$. For $F_1$, this directly follows from
  Lemma~\ref{l-Lipschitzconty} with $\cfo = \bfo$, and the property $v_{\mathcal{I}} \in
  L^s(J;\domLap)$. On the other hand, $F_2$ is of the form $F_2(u) = \mathcal A_\bfo(u)\sol_1(u)$, where
  $\mathcal A_\bfo$ is the operator in~\eqref{e-mathcalA} for $\cfo = \bfo$, i.e., a product of two
  functions in $u$ which are Lipschitzian and bounded on bounded sets in $\mr^r(J;\domLap,L^p)$ with
  values in the correct spaces, by Lemma~\ref{l-Lipschitzconty} and Corollary~\ref{c-Lipschitzian}, see
  also Remark~\ref{r-uniform}. Hence $F_2$ is also Lipschitzian on bounded sets.

  The assertions on $F_3$ are also satisfied: It remains to collect the continuity of
  $v_{\mathcal{I}},p_{\mathcal{I}}$ and $w_{\mathcal{I}}$ due to Lemma~\ref{l-innisplit} with the
  regularity of $v_0,p_0$ and $w_0$ as in~\eqref{e-IVreg}, the assumptions on $R_1$ (cf.\ Assumption~\ref{a-Koeff}) and the properties of $\sol(\cdot)$ as in Theorem~\ref{t-hilfs} combined with
  Corollary~\ref{c-Lipschitzian}.
\end{proof}

\begin{lemma} \label{l-volt} Define $\mathcal A$ as in~\eqref{e-mathcalA}, there setting $\cfo:= \bfo$.
  Further, put $F:=F_1+F_2+F_3$ as given in~\eqref{e-f1}--\eqref{e-f3}. Then both $\mathcal A$ and $F$
  satisfy the Volterra property, cf.\ Theorem~\ref{t-Amann}.
\end{lemma}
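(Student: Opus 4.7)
The plan is to trace the dependence of $\mathcal{A}$ and of each $F_i$ on $u$ and show that in every instance it is already pointwise-in-time \emph{or} inherited from the solution operator $\sol$; it therefore suffices to verify the Volterra property for $\sol$ itself.

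First I would record the following reduction. Inspecting \eqref{e-mathcalA} together with \eqref{e-f1}--\eqref{e-f3}, one sees that for each $t\in J$ the values $\mathcal{A}(u)(t)$ and $F_i(u)(t)$ depend on $u$ only through the pointwise evaluations $u(t)$ and $\sol(u)(t)$, together with the fixed initial-value evolutions $v_{\mathcal{I}}(t),p_{\mathcal{I}}(t),w_{\mathcal{I}}(t)$ which do not depend on $u$ at all. Indeed, $\S_1$ from Definition~\ref{d-2} and the superposition by $\sigma$ are evaluated locally in time, and $A(\cdot)$ acts as a bounded operator for each frozen coefficient. Consequently, if one establishes that
\[
u|_{[0,S]}=v|_{[0,S]} \quad \Longrightarrow \quad \sol(u)|_{[0,S]}=\sol(v)|_{[0,S]}
\]
for every $S\in\iv{0,T}$, then $\mathcal{A}(u)|_{[0,S]}=\mathcal{A}(v)|_{[0,S]}$ and $F(u)|_{[0,S]}=F(v)|_{[0,S]}$ follow immediately from the pointwise structure just described.

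For the Volterra property of $\sol$, let $u,v\in C(\overline J;C(\overline\Omega))$ with $u|_{[0,S]}=v|_{[0,S]}$ and set $(\hat v,\hat p,\hat w):=\sol(u)$, $(\tilde v,\tilde p,\tilde w):=\sol(v)$. Restricting \eqref{e-precv}--\eqref{e-precw} to the interval $[0,S]$, both triples lie in $\mr_0^s(0,S;\domVecLap,\vecL^p)$, both satisfy the zero initial condition, and the right-hand sides $\S_i(t;u(t),\cdot)$ and $\S_i(t;v(t),\cdot)$ coincide on $[0,S]$ because $u(t)=v(t)$ there. Hence both are solutions of the \emph{same} semilinear problem on $[0,S]$, and the existence-and-uniqueness argument of Theorem~\ref{t-hilfs}\,\ref{t-hilfs-exist}) applied with $J$ replaced by $[0,S]$ yields $(\hat v,\hat p,\hat w)|_{[0,S]}=(\tilde v,\tilde p,\tilde w)|_{[0,S]}$.

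Finally one combines the two steps: for $\mathcal{A}$ we use $\cfo=\sigma$ in \eqref{e-mathcalA} and note that $\mathcal{A}(u)(t)$ only involves $u(t)$ and $\sol_1(u)(t)$, both of which are unchanged on $[0,S]$; likewise each of $F_1,F_2,F_3$ depends on $u$ only through $u(t)$ and $\sol(u)(t)$ pointwise in $t$. The mildest obstacle is really just the bookkeeping to apply Theorem~\ref{t-hilfs} on a sub-interval, but since that result is proved via a fixed-point/implicit-function argument that does not use the full length of $J$ in any essential way, this transfers verbatim.
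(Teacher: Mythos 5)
Your argument is correct and follows essentially the same route as the paper: reduce everything to the Volterra property of $\sol$ (since $u$ enters $\mathcal A$ and the $F_i$ only pointwise in time, directly or through $\sol(u)$), and then obtain that property from the fact that $\sol(u)$ solves a forward semilinear system with unique solution on every subinterval $[0,S]$. The paper states this more tersely ("it is clear that..."); your restriction-and-uniqueness bookkeeping just makes the same idea explicit.
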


\begin{proof}
  We only need to check the supposition for $\sol$. Since $\sol(u)$ is obtained as the solution of a
  system of semilinear parabolic \emph{forward} equations into which $u$ enters \emph{pointwise} with
  respect to the time variable, it is clear that if $u_1,u_2 \in C(\overline J;C(\overline \Omega))$ with
  $u_1 = u_2$ on a subinterval $I = \iv{0,S} \subseteq J$, then also $\sol(u_1)|_I = \sol(u_2)|_I$. But
  this is exactly the Volterra property.
\end{proof}

Now all suppositions of Theorem~\ref{t-Amann} are proved to be satisfied in order to prove
Theorem~\ref{l-loes0}.

\begin{proofof}{Theorem~\ref{l-loes0}}
  Since we presupposed the correct regularity for the initial value
  $u_0 \in (L^p,\domLap)_{1-\frac {1}{r},r}$, it remains to collect all the assertions from
  Lemmata~\ref{l-Lipschitzconty},~\ref{l-F} and~\ref{l-volt}. With these, Theorem~\ref{t-Amann} is
  applicable and, hence, proves Theorem~\ref{l-loes0}.
\end{proofof}

With Theorem~\ref{l-loes0} at hand, we are now in turn able to prove the main Theorem~\ref{t-mainres} via
Theorem~\ref{l-modisystem}.

\begin{proofof}{Theorem~\ref{l-modisystem}}
  Let $u \in \mr^r(I;\domLap,L^p)$ be the local-in-time solution of~\eqref{e-0u} on an interval
  $I \subset I_\bullet$ as given by Theorem~\ref{l-loes0}. Lemma~\ref{l-embeddd} shows that $u$ admits
  the regularity to obtain $(v,p,w) := \sol(u)$ via Theorem~\ref{t-hilfs}. This proves
  Theorem~\ref{l-modisystem} by construction.
\end{proofof}

\begin{proofof}{Theorem~\ref{t-mainres}}
  We use Theorem~\ref{l-modisystem}.  Let
  \[(u,(\check v,\check p,\check w)) \in \mr^r(I;\domLap,L^p) \times \mr^s_0(I;\domVecLap,\vecL^p)\] be
  the solutions of~\eqref{e-precu}--\eqref{e-precIV} as given by Theorem~\ref{l-modisystem} (we need to
  return to the accented way of denoting the functions, as introduced in~\eqref{e-splitoff}, now). It
  suffices to ``remove'' the cut-off introduced in Definition~\ref{d-2} for
  $(\check v,\check p,\check w)$. Let $M$ be the number from Definition~\ref{d-2} for given $\delta >
  0$. Firstly, from Lemma~\ref{l-innisplit}, we know that
  \[\|v_{\mathcal{I}}\|_{C(\overline I;C(\overline \Omega))} \vee \|p_{\mathcal{I}}\|_{C(\overline
      I;C(\overline \Omega))} \vee \|w_{\mathcal{I}}\|_{C(\overline I;C(\overline \Omega))} \leq M.\] On
  the other hand, since $\check v,\check p$ and $\check w$ are functions from
  $C(\overline I;C(\overline \Omega))$ by Lemma~\ref{l-embeddd} with initial value zero, there exists an
  interval $I_0 = \iv{0,S_0}\ \subseteq I$ such that
  \[\|\check v\|_{C(\overline I_0;C(\overline \Omega))} \vee \|\check p\|_{C(\overline I_0;C(\overline
      \Omega))} \vee \|\check w\|_{C(\overline I_0;C(\overline \Omega))} \leq \frac {\delta}{2}.\] This
  means that
  \begin{multline*}
    R_j^\eta\bigl(u(t),v_{\mathcal{I}}(t) +\check v(t),p_{\mathcal{I}}(t) +\check p(t),w_{\mathcal{I}}(t)
    +\check w(t)\bigr) \\= R_j\bigl(u(t),v_{\mathcal{I}}(t) +\check v(t),p_{\mathcal{I}}(t) +\check
    p(t),w_{\mathcal{I}}(t) +\check w(t)\bigr)
  \end{multline*}
  for every $t \in \overline I_0$, hence $(u,(v,p,w))$ with $(v,p,w)$ as in~\eqref{e-splitoff} are a
  solution to~\eqref{e-preu}--\eqref{e-anfang} on $I_0$, cf.~\eqref{e-inhomanf}. Moreover, $(v,p,w)$
  admit the correct regularity due to
  $(v_{\mathcal{I}},p_{\mathcal{I}},w_{\mathcal{I}}) \in \mr^s(J;\domVecLap,\vecL^p)$, see
  Lemma~\ref{l-innisplit}.
\end{proofof}

\section[Concluding Remarks] {Concluding Remarks}
\label{s-Concluding}

In this concluding chapter we want to comment on possible relaxations and modifications that can
be done to apply our results also to some slightly different situations than those that we have proposed
in the present paper.
\begin{enumerate}[i)]
\item{Reduction to simplified models:} We want to point out again that the simplified
  model~\eqref{simplified} may also be treated by the strategy used above for the full model, with very
  little changes. The same is true for the case of only elliptic equations for $v,p$, and $w$, for which
  one would not need to deal with a nonlocal equation. We refer to the paragraph in the introduction and
  to~\cite{ThermistorPre1}, where such a system was treated.
\item{Regularity of initial data:} We suggest that one can reduce the requirements on the initial values
  considerably, if one is willing and able to work in spaces with temporal weights. The basis of such an
  approach are the results in~\cite{PruessKoenWilk} where it is shown that maximal parabolic regularity
  carries over to spaces with temporal weights. The demanding task would be to prove an analogue of
  Amann's theorem also in this case and, finally, carry out the program of this paper in that
  setting. Clearly, this would be an ambitious program and is completely out of scope here.
\item{Boundary conditions in the model:} Of course, one can also impose other boundary conditions than
  homogeneous Neumann conditions. For example, one can also find references where no-flux boundary
  conditions for the equation of the population density and homogeneous Dirichlet conditions for the
  chemo-attractant, or homogeneous Dirichlet boundary conditions for both equations of the simplified
  system~\eqref{simplified} are considered (see for example~\cite{Diaz} and~\cite{Wolansky}). If still
  other boundary conditions are imposed (as done for instance in~\cite{Myerscough}) or if the
  inhomogeneities $R_i$ consist of more delicate terms such as ones ``living on the boundary''
  $\partial\Omega$, one can proceed in a quite similar way, basing on Assumption~\ref{a-reg} in case of
  pure Dirichlet conditions or mixed boundary conditions. There also exist large classes of domains for
  which the assumption is satisfied in these cases, cf.~\cite{disser}. Then spaces of type $W^{-1,q}$
  would be adequate to considering the system in and the principal functional analytical framework would
  be very similar. In particular, the needed elliptic and parabolic regularity results are also
  available here, cf.~\cite[Ch.~11]{auscher}.
\item{Convex domains:} In contrast to most known results so far we did not assume the domain $\Omega$ to
  be convex. However, if the domain $\Omega$ \emph{is} convex, then it is easier to prove that the
  Keller-Segel system is well-posed: one is enabled to treat the problem in $L^2$, basing on the classical
  result $(-\Delta +1)^{-1} \colon L^2 \to H^2$, cf.~\cite[Ch.~3.2]{grisvard85}. Namely, from this one
  deduces
  \begin{equation*}
    \bigl(L^2,\dom_{L^2}(\Delta )\bigr)_{\theta,1} \embeds \bigl[L^2,\dom_{L^2}(\Delta )\bigr]_\theta \embeds \bigl[L^2,H^2\bigr]_\theta \embeds
    W^{1,4},
  \end{equation*}
  as long as $\theta \ge \frac {1}{2}(1+\frac {d}{4})$, the bound on $\theta$ being strictly smaller than
  $1$ for space dimensions $d = 2$ or $d=3$. Thus, one can principally proceed as in our more general
  proof, thereby avoiding the nontrivial considerations in the non-Hilbert case we used.
\item{Regularity of solutions:} Concerning the equations for $(v,p,w)$, one could choose any other
  integrability index $p \in \iv{\frac q2,\infty}$ for the spatial variable. Moreover, it is
  possible to \emph{bootstrap} the regularity of the solutions by inserting the solutions
  $(v,p,w) \in \mr^s_0(J;\dom_{\mathbb L^p}(\Delta),\vecL^p) \embeds C^\alpha(\overline J;C(\overline
  \Omega))$ of~\eqref{e-precv}--\eqref{e-precw} into the right hand sides, which then each belong to a
  space $C^\beta(J;C(\overline \Omega))$ for some $\beta > 0$.  Now exploiting the fact that $-\Delta$
  also generates an analytic semigroup on $C(\overline \Omega)$ (see~\cite[Rem.~2.6]{Ouhabaz}) and the
  well known results of~\cite[Ch.~4]{luna}, one obtains even more regularity for $(v,p,w)$.
\item{Matrix-valued coefficient functions:} Last, we want to point out a technicality concerning our
  considerations in Chapter~\ref{sec:restriction--nabla} and~\ref{sec:fracpower}. As already mentioned in
  the introduction, these considerations may also be generalized to \emph{real matrix-valued}
  coefficients, that is, the differential operators $-\nabla\cdot\mu\nabla$ where $\mu$ is a bounded
  measurable function on $\Omega$ taking its values in the set of positive definite matrices, since the
  underlying results are available also in this case, cf.~\cite{elschner} and the references therein, see
  also~\cite{disser}. We did not undertake this here because the considered Keller-Segel model is
  restricted to scalar coefficients and the general way to proceed is clear.
\end{enumerate}

\ack

The authors want to thank Herbert Amann (Z\"urich) for valuable discussions. Joachim
Rehberg acknowledges support from ERC grant \#267802: \emph{Analysis of Multiscale Systems Driven by
  Functionals}.

\section*{References}

\end{document}